\newtheorem{Theorem}{Theorem}[section]
\newtheorem{theorem}[Theorem]{Theorem}
\newtheorem{definition}[Theorem]{Definition}
\newtheorem{remark}[Theorem]{Remark}
\newtheorem{example}[Theorem]{Example}
\newtheorem{lemma}[Theorem]{Lemma}
\newtheorem{proposition}[Theorem]{Proposition}
\newtheorem{corollary}[Theorem]{Corollary}
\title{Representations and relative Rota-Baxter operators of
Hom-Leibniz-Poisson algebras}
\author[Sylvain Attan ]
       { Sylvain Attan}
\begin{document}
\maketitle
\begin{abstract}
Representations and relative Rota-Baxter operators with respect to representations of Hom-Leibniz Poisson algebras are introduced and studied. Some characterizations of these operators are obtained. The notion of matched pair and Nijenhuis operators of Hom-Leibniz Poisson algebras are given and various relevant constructions of these Hom-algebras are deduced.
\end{abstract}
{\bf 2010 Mathematics Subject Classification:} 17A30, 17A32, 17B10, 
17B63, 17B99.

{\bf Keywords:} Hom-Leibniz Poisson algebra, matched pair, relative Rota-Baxter operator, Nijenhuis operator.
\section{Introduction}
Leibniz algebras are a non-commutative version of Lie algebras whose brackets
satisfy the Leibniz identities rather than Jacobi identities \cite{jlLlP}. As Poisson algebras have simultaneousely  associative and Lie algebras structures, Leibniz Poisson algebras \cite{mctd} have simultaneousely associative and Leibniz algebras structures. Hence, they can be viewed as  a non-commutative version of Poisson algebras. Recall that a (non-commutative) Leibniz-Poisson algebra is
an associative (non-commutative) algebra $
(A,\cdot)$ equipped with a binary bracket operation $[,]: A^{\times 2}\rightarrow A$ such that $(A,[,])$ is a Leibniz algebra and the following compatibility condition holds
$$[x\cdot y,z]=x\cdot[y,z]+[x,y]\cdot z \mbox{ for all $x,y,z\in A.$}$$
The general theory of Hom-algebras takes its roots in the introduction of
Hom-Lie algebras by D. Larsson, S. D. Silvestrov and J. T. Hartwig \cite{dlsds1}, \cite{dlsds2}, \cite{HAR1}. Starting from the well-known relation between associative algebras and Lie algebras, the notion of Hom-associative algebras was introduced \cite{MAK3} and it was proved that the commutator Hom-algebra of any Hom-associative algebra is a Hom-Lie algebra. Since then, other types of Hom-algebras have emerged; in particular, Hom-Leibniz algebras \cite{MAK3} were introduced as a non-commutative version of Hom-Lie algebras as well as Hom-Poisson algebras \cite{dy1} which have simultaneousely Hom-associative algebra and Hom-Lie algebra structures and satisfying a certain compatibility condition.
Hom-Leibniz algebras have been widely studied from the point of view of representation and cohomology theory \cite{ycys}, deformation theory \cite{sahhbk}\cite{gmrs} in recent years. The same is true for Hom-associative algebras since, the definition of
Hochschild-type cohomology  and the study of the one parameter formal deformation theory for these type of
Hom-algebras are given \cite{fazeam}, \cite{amss}.

The notion of Rota-Baxter operators on associative algebras was introduced \cite{gb} and those on Leibniz algebras are considered in \cite{ysrt}. It is found that, those operators have many applications in Connes-Kreimers algebraic approach to the renormalization in perturbative quantum field theory \cite{acdk}. 
A generalization of Rota-Baxter operators called relative Rota-Baxter operators (or $\mathcal{O}$-operators) has been introduced for left Hom-Leibniz algebras \cite{sgsw} where the graded Lie algebra that
characterizes those operators as Maurer-Cartan elements is constructed whereas in    
 \cite{tcsmam}, the cohomology theory of 
$\mathcal{O}$-operators on Hom-
associative algebras are found. In this paper, from the representations of Hom-associative and Hom-Leiniz algebras, we will establish those of  Hom-Leibniz-Poisson algebras. We will also introduce  relative Rota-Baxter operators on Hom-Leibniz Poisson algebras after having considered them on Hom-associative and right Hom-Leibniz algebras. Most of the results for relative Rota-Baxter operators on right Hom-Leibniz algebras are established since they differ of course from those for left Hom-Leibniz algebras \cite{sgsw}.

The paper is organized as follows. 
Section 2 is devoted to reminders of fundamental concepts. Some results on representations and relative Rota-Baxter operators are proved for Hom-associative algebras. Concerning section 3,  similar results to those of the previous section are established for (right) Hom-Leibniz algebras. Although some of these results are proved in the case of left Hom-Leibniz algebras \cite{sgsw}, we take them again in our case for consistency of the rest. Finally, the last section contains the main results of this work. It is a logical continuation of the results of the previous sections. Here, we introduce and study representations of Hom-Leibniz Poisson algebras. The notions of matched pairs and relative Rota-Baxter operators of such Hom-algebras have also been discussed and interesting results have been obtained. 

Throughout this paper, all vector spaces and algebras are meant over a ground field $\mathbb{K}$ of characteristic 0.
\section{Preliminaries}
This section is devoted to some definitions which are a very useful  for next sections. Some elemnetary results are also proven.
\begin{definition}
A Hom-module is a pair $(A,\alpha_M)$ consisting of a $\mathbb{K}$-module $A$ and 
a linear self-map $\alpha_A: A\longrightarrow A.$ A morphism 
$f: (A,\alpha_A)\longrightarrow (B,\alpha_B)$ of Hom-modules is a linear map 
 $f: A\longrightarrow B$ such that $f\circ\alpha_A=\alpha_B\circ f.$
\end{definition}
\begin{definition} A Hom-algebra is a triple $(A,\mu,\alpha)$ in which $(A,\alpha)$ is a Hom-module, $\mu: A^{\otimes 2}\longrightarrow A$ is a linear map.
The Hom-algebra $(A,\mu,\alpha)$ is said to be  multiplicative if $\alpha\circ\mu=\mu\circ\alpha^{\otimes 2}.$  A morphism 
$f: (A,\mu_A,\alpha_A)\longrightarrow (B,\mu_B,\alpha_B)$ of Hom-algebras is a morphism of the underlying Hom-modules such that $f\circ\mu_A=\mu_B\circ f^{\otimes 2}.$
\end{definition}
\begin{definition}
 Let $(A,\mu,\alpha)$ be a Hom-algebra and 
 $\lambda\in\mathbb{K}.$  Let $R$ be a linear map satisfying 
 \begin{eqnarray}
  \mu(R(x),R(y))=R(\mu(R(x),y)+\mu(x,R(y))+\lambda\mu(x,y)),\ \forall x,y \in A \label{Rota-Baxt}
 \end{eqnarray}
Then, $R$ is called a Rota-Baxter operator of weight $\lambda$ and $(A,\mu,\alpha, R)$ is called a Rota-Baxter Hom-algebra of weight 
$\lambda.$
\end{definition}
In the sequel, to unify our terminologies by a Rota-Baxter operator (resp. a Rota-Baxter Hom-algebra), we mean a Rota-Baxter operator (resp. a Rota-Baxter Hom-algebra) of weight $\lambda=0.$

\begin{definition}\cite{MAK3}
 A Hom-associative algebra is a multiplicative Hom-algebra $(A,\mu,\alpha)$
satisfying the Hom-associativity condition i.e.,
\begin{eqnarray}
 as_{\alpha}(x,y,z):=\mu(\mu(x,y),\alpha(z))-\mu(\alpha(x),\mu(y,z))=0\label{HAs} \mbox{ for all $x,y,z\in A$}
\end{eqnarray}
 \end{definition}
Other Hom-algebras as Hom-associative algebras, which occur regularly in this paper are Hom-Leibniz algebras.
\begin{definition}\cite{MAK3}
 A (right) Hom-Leibniz algebra is a multiplicative Hom-algebra $(A,[,],\alpha)$ satisfying the Hom-Leibniz identity
 \begin{eqnarray}
  [[x,y],\alpha(z)]=[\alpha(x),[y,z]]+
  [[x,z],\alpha(y)] \mbox{ for all $x,y,z\in A$} \label{leib}
 \end{eqnarray}
\end{definition}
Recall that a morphism $f: (A,\mu_A,\alpha_A)\rightarrow (B,\mu_B,\alpha_B)$ of  Hom-associative (resp. Hom-Leibniz) algebras is a morphism of underlying Hom-algebras.\\

Hom-Leibniz Poisson algebras are fundamental Hom-algebras of this paper. They are a generalization of Hom-associative and  Hom-Leibniz algebras. These Hom-algebras can also be viewed as a non-anticommutative version of Hom-Poisson algebras.
\begin{definition}
 A Hom-Leibniz Poisson algebra is a triple
 $(A,\cdot,[,],\alpha)$ consisting of a linear space $A,$ two bilinear maps $\cdot, [,]: A^{\times 2} \rightarrow A$ and a linear map $\alpha: A\rightarrow A$ satisfying the following axioms
 \begin{enumerate}
  \item $(A,\cdot,\alpha)$ is a Hom-associative algebra,
\item $(A, [,],\alpha)$ is a Hom-Leibniz algebra,
\item the following  condition holds
\begin{eqnarray}
 [x\cdot y, \alpha(z)]=\alpha(x)\cdot [y,z]+[x,z]\cdot\alpha(y)\mbox{ for all $x,y,z\in A$} \label{cHLP}
\end{eqnarray}
 \end{enumerate}
 \end{definition}
 A morphism $f: (A,\cdot_{A},[,]_A,\alpha_A)\rightarrow (B,\cdot_{B},[,]_B,\alpha_B)$ of Hom-Leibniz Poisson algebras is a morphism of underlying Hom-associative and Hom-Leibniz algebras. 
 \begin{example}
 \begin{enumerate}
  \item  Let $\mathcal{A}:=(A,\cdot,[,])$ be a Leibniz-Poisson algebra and $\alpha$ be a self-morphism of $\mathcal{A}.$ Then, 
  $\mathcal{A}_{\alpha}:=(A,\cdot_{\alpha}:=\cdot\circ\alpha^{\otimes 2}, [,]_{\alpha}:=[,]\circ\alpha^{\otimes 2},\alpha)$ is a Hom-Leiniz-Poisson algebra.
  \item Any Hom-Poisson algebra is a Hom-Leibniz-Poisson algebra.
 \end{enumerate}
 \end{example}
  \begin{remark}
   \begin{enumerate}
   \item  A Hom-associative algebra $(A,\cdot,\alpha)$ (resp. Hom-Leibniz algebra $(A,[,],\alpha$) is called Rota-Baxter Hom-associative (resp. Rota-Baxter Hom-Leibniz) algebra if the underlying Hom-algebra $(A,\cdot,\alpha)$ (resp. $(A,[,],\alpha)$) is a Rota-baxter Hom-algebra.
   \item Similarly, a Hom-Leibniz Poisson algebra $(A,\cdot,[,],\alpha)$ is called a Rota-Baxter Hom-Leibniz Poisson algebra if $((A,\cdot,\alpha)$ is a Rota-Baxter Hom-associative algebra and $(A,[,],\alpha)$ is a Rota-Baxter Hom-Leibniz algebra.
   \end{enumerate}
  \end{remark}
\begin{definition} 
 \begin{enumerate}
  \item A two-sided Hom-ideal of a Hom-associative (resp. Hom-Leibniz) algebra $(A,\mu,\alpha)$ is a subspace $I$ of $A$ satisfying $\alpha(I)\subseteq I,$ $\mu(I,A)\subseteq I$ and $\mu(A,I)\subseteq I.$
  \item A two-sided Hom-ideal of a Hom-Leibniz Poisson algebra $(A, \cdot,[,],\alpha)$ is a two-sided Hom-ideal of the Hom-associative algebra $(A, \cdot,\alpha)$ and a two-sided Hom-ideal of the Hom-Leibniz algebra $(A,[,],\alpha).$
 \end{enumerate}
\end{definition}
Let recall the definition of representations of Hom-associative algebras and some results about these notions.
\begin{definition}
 A representation of a Hom-associative algebra $(A, \cdot, \alpha)$  is a quadruple $(V,\lambda^l,\lambda^r,\phi)$ where $V$ is a vector space, $\phi\in gl(V)$ and $\lambda^l, \lambda^r: A\rightarrow gl(V)$ are three linear maps
such that the following equalities hold for all $x, y \in A:$
\begin{eqnarray}
\phi\lambda^l(x)=\lambda^l(\alpha(x))\phi; \
 \phi\lambda^r(x)=\lambda^r(\alpha(x))\phi\label{rAs1}\\
 \lambda^l(x\cdot y)\phi=\lambda^l(\alpha(x))\lambda^l(y)\label{rAs2}\\
 \lambda^r(x\cdot y)\phi=\lambda^r(\alpha(y))\lambda^r(x)\label{rAs3}\\
 \lambda^l(\alpha(x))\lambda^r(y)=\lambda^r(\alpha(y))\lambda^l(x)\label{rAs4}
\end{eqnarray}
\end{definition}
To give examples of representations of Hom-associative algebras, let prove the following:
\begin{proposition}\label{PEx1}
 Let $\mathcal{A}_1:=(A_1,\mu_1,\alpha_1)$ and $\mathcal{A}_2:=(A_2,\mu_2,\alpha_2)$ be two Hom-associative algebras and $f:\mathcal{A}_1\rightarrow \mathcal{A}_2$ a morphism of Hom-associative algebras. Then $A_2^f:=(A_2, \lambda^l,\lambda^r,\alpha_2)$ is a representation of $\mathcal{A}_1$ where
 $\lambda^l(a)b:=\mu_2(f(a),b)$ and $\lambda^r(a,b):=\mu_2(b,f(a))$ for all $(a,b)\in A_1\times A_2.$
\end{proposition}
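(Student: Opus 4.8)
The plan is to verify directly that the quadruple $(A_2,\lambda^l,\lambda^r,\alpha_2)$ satisfies the four defining axioms \eqref{rAs1}--\eqref{rAs4} of a representation of $\mathcal{A}_1=(A_1,\mu_1,\alpha_1)$, where the module map $\phi$ is taken to be $\alpha_2$ and the product ``$\cdot$'' occurring in those axioms is $\mu_1$. The two ingredients I will use throughout are that $f$ is a morphism of Hom-associative algebras, i.e. $f\circ\alpha_1=\alpha_2\circ f$ and $f\circ\mu_1=\mu_2\circ f^{\otimes 2}$, and that $\mathcal{A}_2$ is multiplicative Hom-associative, i.e. $\alpha_2\circ\mu_2=\mu_2\circ\alpha_2^{\otimes 2}$ together with $as_{\alpha_2}=0$ from \eqref{HAs}.

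First I would check \eqref{rAs1}. Evaluating both sides on an arbitrary $b\in A_2$ and unfolding the definitions of $\lambda^l$ and $\lambda^r$, the left-hand sides become $\alpha_2(\mu_2(f(x),b))$ and $\alpha_2(\mu_2(b,f(x)))$, while the right-hand sides become $\mu_2(f(\alpha_1(x)),\alpha_2(b))$ and $\mu_2(\alpha_2(b),f(\alpha_1(x)))$. Multiplicativity of $\alpha_2$ rewrites the left sides and the relation $f\circ\alpha_1=\alpha_2\circ f$ rewrites the right sides, after which the two agree.

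Next I would treat \eqref{rAs2} and \eqref{rAs3} together. Evaluating on $b\in A_2$, the left-hand side of \eqref{rAs2} is $\mu_2(f(\mu_1(x,y)),\alpha_2(b))=\mu_2(\mu_2(f(x),f(y)),\alpha_2(b))$ by the multiplicativity of $f$, whereas the right-hand side is $\mu_2(\alpha_2(f(x)),\mu_2(f(y),b))$, again using $f\circ\alpha_1=\alpha_2\circ f$; the equality of the two is precisely the Hom-associativity identity \eqref{HAs} in $\mathcal{A}_2$ applied to $(f(x),f(y),b)$. The computation for \eqref{rAs3} is the mirror image, using \eqref{HAs} applied to $(b,f(x),f(y))$. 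Finally, \eqref{rAs4} follows the same pattern: via the morphism relations both sides reduce to a single instance of \eqref{HAs} applied to $(f(x),b,f(y))$.

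There is no genuine obstacle here, as the statement is a direct verification. The only point requiring care is matching the precise placement of $\alpha_2$ and the order of the factors dictated by the asymmetric axioms \eqref{rAs2}--\eqref{rAs4}, together with the observation that in each of these three cases the two sides differ by exactly one application of Hom-associativity in $\mathcal{A}_2$; it is the morphism property of $f$ that allows the mixed expressions involving $f(\mu_1(x,y))$ and $f(\alpha_1(x))$ to be rewritten purely in terms of $\mu_2$, $\alpha_2$, and the images $f(x),f(y)$.
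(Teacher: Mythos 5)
Your proposal is correct and follows essentially the same route as the paper's proof: a direct verification of \eqref{rAs1}--\eqref{rAs4} by unfolding the definitions of $\lambda^l,\lambda^r$, using $f\circ\alpha_1=\alpha_2\circ f$ and $f\circ\mu_1=\mu_2\circ f^{\otimes 2}$ to rewrite everything in terms of $\mu_2$ and $\alpha_2$, and then invoking multiplicativity for \eqref{rAs1} and a single instance of Hom-associativity in $\mathcal{A}_2$ (applied to exactly the triples you name) for each of \eqref{rAs2}--\eqref{rAs4}. No discrepancies with the paper's argument.
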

\begin{proof}
 Let $(x,y)\in A_1^{\times 2}$ and $z\in A_2.$ First, using $f$ is a morphism we prove (\ref{rAs1}) as follows:
 \begin{eqnarray}
  \alpha_2(\lambda^l(x)z)=\alpha_2\mu_2(f(x),z)=\mu_2(\alpha_2f(x),\alpha_2(z))=\mu_2(f\alpha_1(x),\alpha_2(z))=\lambda^l(\alpha_1(x)\alpha_2(z).\nonumber
 \end{eqnarray}
Hence, $\alpha_2\lambda^l(x)=\lambda^l(\alpha_1(x)\alpha_2$ and similarly $\alpha_2\lambda^r(x)=\lambda^r(\alpha_1(x)\alpha_2.$   Next, as $f$ is a morphism, using (\ref{HAs}) in $\mathcal{A}_2,$ we proceed for (\ref{rAs2}) as follows:
\begin{eqnarray}
&& \lambda^l(\mu_1(x,y))\alpha_2(z)=\mu_2(f(\mu_1(x,y)),\alpha_2(z))=\mu_2(\mu_2(f(x),f(y)),\alpha_2(z))\nonumber\\
 &&=\mu_2(\alpha_2f(x),\mu_2(f(y),z))=\mu_2(f\alpha_1(x),\mu_2(f(y),z))=\lambda^l(\alpha_1(x))\lambda^l(y)z\nonumber
\end{eqnarray}
Therefore, $\lambda^l(\mu_1(x,y))\alpha_2=\lambda^l(\alpha_1(x))\lambda^l(y).$ In a similar way, (\ref{rAs3}) holds. Finally, thanks to conditions $f\alpha_1=\alpha_2f$ and (\ref{HAs}) in $\mathcal{A}_2,$ we compute
\begin{eqnarray}
&& \lambda^l(\alpha_1(x))\lambda^r(y)z=
 \mu_2(f\alpha_1(x),\mu_2(z,f(y))=
 \mu_2(\alpha_2f(x),\mu_2(z,f(y))
 \nonumber\\
 &&=
 \mu_2(\mu_2(f(x),z),\alpha_2f(y))=
 \mu_2(\mu_2(f(x),z),f\alpha_1(y))=
 \lambda^r(\alpha_1(y))\lambda^r(x)z\nonumber
\end{eqnarray}
and therefore, (\ref{rAs4}) is obtained.
\end{proof}
Now, using Proposition \ref{PEx1}, we obtain the following example as applications.
\begin{example}
\begin{enumerate}
 \item Let $(A, \cdot, \alpha)$ be a Hom-associative algebra.
 Define a left multiplication 
 $ l: A\rightarrow gl(A)$ and a
  right multiplication $r: A\rightarrow gl(A)$ by $l(x)y:=x\cdot y$ and $ r(x)y:=y\cdot x$  for all $x, y \in A.$ Then $(A , l,r, \alpha)$ is a
representation of $(A, \cdot, \alpha),$ which is called a regular representation
 \item Let $(A,\mu,\alpha)$ be a Hom-associative algebra and $(B,\alpha)$ be a two-sided Hom-ideal of $(A,\mu,\alpha).$ Then $(B,\alpha)$  inherits a structure of representation of $(A,\mu,\alpha)$ where 
 $\rho^l(a)b:=\mu(a,b);\ \rho^r(b,a):=\mu(b,a)$ for all $(a,b)\in A\times B.$
\end{enumerate}
\end{example}
\begin{proposition}\label{sRHas}
 Let $\mathcal{V}:=(V, \lambda^l,\lambda^r,\phi)$ be a representation of a Hom-associative algebra $\mathcal{A}:=(A,\cdot,\alpha)$ and $\beta$ be a self-morphism of $\mathcal{A}$. Then 
 $\mathcal{V}_{\beta}=:(V,\lambda^l_{\beta}:=\lambda^l\beta,\lambda^r_{\beta}:=\lambda^r\beta,\phi)$ is a representation of $\mathcal{A}.$ 
\end{proposition}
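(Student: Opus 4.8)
The plan is to verify directly the four defining axioms (\ref{rAs1})--(\ref{rAs4}) for the twisted quadruple $\mathcal{V}_{\beta}$, using only two structural facts about the self-morphism $\beta$: that it commutes with the structure map, $\beta\alpha=\alpha\beta$, and that it preserves the multiplication, $\beta(x\cdot y)=\beta(x)\cdot\beta(y)$. The guiding idea is that each axiom for $\mathcal{V}_{\beta}$ is obtained by applying the corresponding axiom for $\mathcal{V}$ at the shifted arguments $\beta(x)$ and $\beta(y)$, after which the two commutation facts above let us pull $\beta$ back through $\alpha$ to recover the twisted maps $\lambda^l_{\beta}=\lambda^l\beta$ and $\lambda^r_{\beta}=\lambda^r\beta$. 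Note that $\phi$ is left unchanged, so only the interaction of $\phi$ and the multiplication with the twisted maps has to be re-examined.

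First I would check (\ref{rAs1}). For $x\in A$ we have $\phi\lambda^l_{\beta}(x)=\phi\lambda^l(\beta(x))$; applying (\ref{rAs1}) to $\mathcal{V}$ at the element $\beta(x)$ gives $\lambda^l(\alpha\beta(x))\phi$, and since $\alpha\beta=\beta\alpha$ this equals $\lambda^l(\beta\alpha(x))\phi=\lambda^l_{\beta}(\alpha(x))\phi$, which is exactly (\ref{rAs1}) for $\mathcal{V}_{\beta}$. The identity for $\lambda^r_{\beta}$ is entirely analogous.

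Next I would treat (\ref{rAs2}) and (\ref{rAs3}). Here the key step is to use that $\beta$ is an algebra morphism: $\lambda^l_{\beta}(x\cdot y)\phi=\lambda^l(\beta(x\cdot y))\phi=\lambda^l(\beta(x)\cdot\beta(y))\phi$, to which (\ref{rAs2}) for $\mathcal{V}$ applies at $\beta(x),\beta(y)$ to yield $\lambda^l(\alpha\beta(x))\lambda^l(\beta(y))$; invoking $\alpha\beta=\beta\alpha$ once more rewrites this as $\lambda^l_{\beta}(\alpha(x))\lambda^l_{\beta}(y)$. The computation for (\ref{rAs3}) is the same, with $\lambda^r$ in place of $\lambda^l$ and the roles of $x,y$ swapped as in the original axiom.

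Finally, for (\ref{rAs4}) I would compute $\lambda^l_{\beta}(\alpha(x))\lambda^r_{\beta}(y)=\lambda^l(\beta\alpha(x))\lambda^r(\beta(y))=\lambda^l(\alpha\beta(x))\lambda^r(\beta(y))$, apply (\ref{rAs4}) for $\mathcal{V}$ at $\beta(x),\beta(y)$, and commute $\alpha$ past $\beta$ again to arrive at $\lambda^r_{\beta}(\alpha(y))\lambda^l_{\beta}(x)$. I do not expect a genuine obstacle here: the argument is a routine transport of the four axioms through the self-morphism $\beta$. The only point requiring care is bookkeeping, namely remembering to invoke $\beta\alpha=\alpha\beta$ after each application of an original axiom so that the twist lands on $\alpha(x)$ (resp. $\alpha(y)$) rather than on $\beta\alpha(x)$; no identity beyond the defining properties of a Hom-associative self-morphism is needed.
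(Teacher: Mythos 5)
Your proposal is correct and follows essentially the same route as the paper's own proof: verify each of the four axioms by applying the corresponding axiom of $\mathcal{V}$ at the shifted arguments $\beta(x),\beta(y)$, using that $\beta$ is multiplicative and that $\alpha\beta=\beta\alpha$. No meaningful difference in method or level of detail.
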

\begin{proof} Let $x,y\in A.$
 First, by (\ref{rAs1}) in $\mathcal{V}$ and the condition $\alpha\beta=\beta\alpha$ we have 
 \begin{eqnarray}
  \phi\lambda^l_{\beta}(x)=\phi\lambda^l(\beta(x))=\lambda^l(\alpha\beta(x))\phi=\lambda^l(\beta\alpha(x))\phi=\lambda^l_{\beta}(\alpha(x))\phi.\nonumber
 \end{eqnarray}
Similarly, we prove $\phi\lambda^r_{\beta}(x)=\lambda^r_{\beta}(\alpha(x))\phi$ and hence, we obtain (\ref{rAs1}) for $\mathcal{V}_{\beta}.$ Next, by (\ref{rAs2}) in $\mathcal{V}$ and the fact that $\beta$ is a morphism, we get:
\begin{eqnarray}
 &&\lambda^l_{\beta}(x\dot y)\phi=\lambda^l(\beta(x\cdot y))\phi=\lambda^l(\beta(x)\cdot\beta(y))\phi=\lambda^l(\alpha\beta(x))\lambda^l(\beta(y))\nonumber\\
 &&=\lambda^l(\beta\alpha(x))\lambda^l(\beta(y))=\lambda^l_{\beta}(x)\lambda^l_{\beta}(y)\nonumber
\end{eqnarray}
i.e., (\ref{rAs2}) holds and similarly, (\ref{rAs3}) holds for $\mathcal{V}_{\beta}.$  Finally, using $\alpha\beta=\beta\alpha$  and (\ref{rAs4}) for $\mathcal{V},$ we compute
\begin{eqnarray}
 \lambda^l_{\beta}(\alpha(x))\lambda^r_{\beta}(y)=\lambda^l(\beta\alpha(x))\lambda^l(\beta(y))=\lambda^l(\alpha\beta(x))\lambda^l(\beta(y))=\lambda^r(\alpha\beta(y))\lambda^l(\beta(x))=
 \lambda^r_{\beta}(\alpha(y))\lambda^l_{\beta}(x)\nonumber
\end{eqnarray}
\end{proof}
Let us recall the following necessary results  for the last section of this paper.
\begin{proposition}\label{spHAs}\cite{mnhhs}
 Let $(A,\cdot,\alpha)$ be a Hom-associative  algebra. Then $(V, \lambda^l,\lambda^r, \phi)$ is a representation of $(A, \cdot,\alpha)$ if and only if the direct sum of vector spaces, $A\oplus V,$  turns into a 
Hom-associative 
algebra with the multiplication and the linear map defined by
\begin{eqnarray}
 (x+u)\ast(y+v):=x\cdot y+(\lambda^l(x)v+\lambda^r(y)u)\label{SdpHAs1}\\
 (\alpha\oplus\phi)(x+u):=\alpha(x)+\phi(u)\label{sdpHAs2}
\end{eqnarray}
This Hom-associative algebra is called the semi-direct product of $A$ with $V.$
\end{proposition}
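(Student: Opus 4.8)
The plan is to verify directly that the semi-direct product data $(A\oplus V,\ast,\alpha\oplus\phi)$ satisfies the two requirements of a Hom-associative algebra---multiplicativity and the Hom-associativity identity (\ref{HAs})---and to observe that each requirement splits into a component lying in $A$ and a component lying in $V$. The $A$-components will always reduce to the corresponding identity for $(A,\cdot,\alpha)$, which holds by hypothesis, so the entire content of the proposition is carried by the $V$-components, which will match the four representation axioms one at a time.

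First I would treat multiplicativity. Writing $X=x+u$, $Y=y+v$ and expanding both $(\alpha\oplus\phi)(X\ast Y)$ and $(\alpha\oplus\phi)(X)\ast(\alpha\oplus\phi)(Y)$ via (\ref{SdpHAs1})--(\ref{sdpHAs2}), the $A$-parts agree by multiplicativity of $\alpha$, while the $V$-parts yield $\phi\lambda^l(x)v+\phi\lambda^r(y)u$ on one side and $\lambda^l(\alpha(x))\phi(v)+\lambda^r(\alpha(y))\phi(u)$ on the other. Since $u$ and $v$ are arbitrary and independent, equality for all $u,v$ is equivalent to the two equations packaged in (\ref{rAs1}). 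Thus the first representation axiom is exactly the multiplicativity of the semi-direct product.

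Next I would compute the Hom-associator $as_{\alpha\oplus\phi}(X,Y,Z)$ for $X=x+u$, $Y=y+v$, $Z=z+w$, again expanding with (\ref{SdpHAs1})--(\ref{sdpHAs2}). The $A$-component equals $as_\alpha(x,y,z)$, which vanishes by (\ref{HAs}) in $A$. The $V$-component is a sum of six terms; grouping them by the single vector $u$, $v$ or $w$ on which each acts, the coefficient of $w$ is $\big(\lambda^l(x\cdot y)\phi-\lambda^l(\alpha(x))\lambda^l(y)\big)w$, the coefficient of $u$ is $\big(\lambda^r(\alpha(z))\lambda^r(y)-\lambda^r(y\cdot z)\phi\big)u$, and the coefficient of $v$ is $\big(\lambda^r(\alpha(z))\lambda^l(x)-\lambda^l(\alpha(x))\lambda^r(z)\big)v$. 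These three coefficients are precisely (\ref{rAs2}), (\ref{rAs3}) and (\ref{rAs4}).

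For the forward implication, assuming $(V,\lambda^l,\lambda^r,\phi)$ is a representation, all three $V$-coefficients vanish, so the associator is zero and, together with the multiplicativity step, $A\oplus V$ is a Hom-associative algebra. For the converse, the key point---and the only place needing a short argument rather than pure expansion---is that because $u,v,w$ range independently over $V$, vanishing of the $V$-component of the associator for all choices forces each grouped coefficient to vanish separately (e.g. set $u=v=0$ to isolate the $w$-term), recovering (\ref{rAs2})--(\ref{rAs4}); likewise (\ref{rAs1}) is recovered from multiplicativity. I expect the only delicate bookkeeping to be keeping the actions $\lambda^l,\lambda^r$ and the twist $\alpha$ attached to the correct arguments when expanding the nested products; once the six terms are correctly sorted by $u,v,w$, the identification with the four axioms is immediate.
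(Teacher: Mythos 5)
Your proof is correct, and it follows exactly the strategy the paper itself uses for the analogous Hom-Leibniz statement (Proposition \ref{spHLa}): expand the product and the Hom-associator of $A\oplus V$, observe that the $A$-component reduces to the corresponding identity in $(A,\cdot,\alpha)$, and match the $V$-components, grouped by which of $u,v,w$ they act on, with the axioms (\ref{rAs1})--(\ref{rAs4}). The paper cites this particular proposition from the literature without reproducing a proof, but your identification of the three associator coefficients with (\ref{rAs2}), (\ref{rAs3}), (\ref{rAs4}) and of multiplicativity with (\ref{rAs1}) is exactly the intended argument, including the independence-of-$u,v,w$ point needed for the converse.
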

\begin{definition}\cite{mnhhs}
 Let $\mathcal{A}_1:=(A_1,\cdot,\alpha_1)$ and 
 $\mathcal{A}_2:=(A_2,\bullet,\alpha_2)$ two Hom-associative algebras. Let $\lambda_1^l, \lambda_1^r: A_1\rightarrow gl(A_2)$ and 
 $\lambda_2^l, \lambda_2^r: A_2\rightarrow gl(A_1)$ be linear maps such that $(A_2,\lambda_1^l, \lambda_1^r, \alpha_2)$ is a representation of $\mathcal{A}_1,$ 
 $(A_1,\lambda_2^l, \lambda_2^r, \alpha_2)$ is a representation of $\mathcal{A}_2$ and the following conditions hold
 \begin{eqnarray}
  &&\lambda_1^l(\alpha_1(x)(u\bullet v)=\lambda_1^l(\lambda_2^r(u)x)\alpha_2(v)+(\lambda_1^l(x)u)\bullet\alpha_2(v),\nonumber\\
&&\lambda_1^r(\alpha(x))(u\bullet v)=
\lambda_1^r(\lambda_2^l(v)x)\alpha_2(u)+
\alpha_2(u)\bullet(\lambda_1^r(x)v),\nonumber\\
&&\lambda^l(\alpha_2(u))(x\cdot y)=\lambda_2^l(\lambda_1^r(x)u)\alpha_1(y)+
(\lambda_2^l(\alpha_2(u)x)\cdot\alpha_1(y),\nonumber\\
&&\lambda_2^r(\alpha_2(u))(x\cdot y)=\lambda_2^r(\lambda_1^l(y)u)\alpha_1(x)+
\alpha_1(x)\cdot(\lambda_2^r(u)y),\nonumber\\
&&\lambda_1^l(\lambda_2^l(u)x)\alpha_2(v)+
(\lambda_1^r(x)u)\bullet\alpha_2(v)-
\lambda_1^r(\lambda_2^r(v)x)\alpha_2(u)-
\alpha_2(u)\bullet(\lambda_1^l(x)v)=0,\nonumber\\
&&\lambda_2^l(\lambda_1^l(x)u)\alpha_1(y)+
(\lambda_2^r(u)x)\cdot\alpha_1(y)-
\lambda_2^r(\lambda_1^r(y)u)\alpha_1(x)-
\alpha_1(x)\cdot(\lambda_2^l(u)y)=0,\nonumber
 \end{eqnarray}
Then $((A_1, \lambda_2^l, \lambda_2^r,\alpha_1), (A_2,\lambda_1^l, \lambda_1^r,\alpha_2))$ is called a matched pair of  Hom-associative algebras.
\end{definition}
\begin{proposition}\cite{mnhhs}
 Let $((A_1, \lambda_2^l, \lambda_2^r,\alpha_1), (A_2,\lambda_1^l, \lambda_1^r,\alpha_2))$ be a matched pair of Hom-associative algebras. Then, there is a Hom-associative algebra $(A_1\oplus A_2, \ast, \alpha_1\oplus\alpha_2)$ defined by
\begin{eqnarray}
 (x+b)\ast(y+b)&:=&(x\cdot y+\lambda_2^l(a)y+\lambda_2^r(b)x)+(a\bullet b+\lambda_1^l(x)b+\lambda_1^r(y)a) \label{opHAs1}\\
 (\alpha_1\oplus\alpha_2)(x+a)&:=&\alpha_1(x)+\alpha_2(a)\nonumber
\end{eqnarray}
\end{proposition}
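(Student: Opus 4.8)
The plan is to verify the two defining axioms of a Hom-associative algebra for the triple $(A_1\oplus A_2,\ast,\alpha_1\oplus\alpha_2)$: multiplicativity, and the Hom-associativity condition (\ref{HAs}). Throughout I would write a general element of $A_1\oplus A_2$ as $X=x+a$ with $x\in A_1$, $a\in A_2$, and use that the projections onto $A_1$ and onto $A_2$ are complementary, so that an identity in $A_1\oplus A_2$ holds precisely when it holds in each component separately.

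Multiplicativity is the easy part. Applying $\alpha_1\oplus\alpha_2$ to $(x+a)\ast(y+b)$ and comparing with $\big((\alpha_1\oplus\alpha_2)(x+a)\big)\ast\big((\alpha_1\oplus\alpha_2)(y+b)\big)$, the purely associative terms $\alpha_1(x\cdot y)=\alpha_1(x)\cdot\alpha_1(y)$ and $\alpha_2(a\bullet b)=\alpha_2(a)\bullet\alpha_2(b)$ agree because $\alpha_1,\alpha_2$ are multiplicative, while the four action terms, such as $\alpha_1(\lambda_2^l(a)y)=\lambda_2^l(\alpha_2(a))\alpha_1(y)$, agree precisely by the equivariance relations (\ref{rAs1}) for the two representations.

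For Hom-associativity I would substitute $X=x+a$, $Y=y+b$, $Z=z+c$ into $as_{\alpha_1\oplus\alpha_2}(X,Y,Z)$, expand both $(X\ast Y)\ast(\alpha_1\oplus\alpha_2)(Z)$ and $\big((\alpha_1\oplus\alpha_2)(X)\big)\ast(Y\ast Z)$ using the definition (\ref{opHAs1}) of $\ast$, and then separate the difference into its $A_1$-component and its $A_2$-component. In each component the monomials should organise themselves according to how many \emph{cross} actions they carry: the terms built only from $\cdot$ (respectively only from $\bullet$) cancel by the Hom-associativity of $\mathcal{A}_1$ (respectively $\mathcal{A}_2$); the terms carrying a single action of $\lambda_2$ on $A_1$ (respectively of $\lambda_1$ on $A_2$) cancel by the representation axioms (\ref{rAs2})--(\ref{rAs4}) for $(A_1,\lambda_2^l,\lambda_2^r,\alpha_1)$ (respectively for $(A_2,\lambda_1^l,\lambda_1^r,\alpha_2)$); and the genuinely mixed terms, involving one factor each of $\lambda_1$ and $\lambda_2$, cancel exactly against the six compatibility conditions of the matched pair, with the three conditions expressed through the product $\cdot$ feeding the $A_1$-component and the three expressed through $\bullet$ feeding the $A_2$-component.

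The hard part will be bookkeeping rather than any conceptual difficulty: the expansion of $as_{\alpha_1\oplus\alpha_2}$ produces a large number of monomials, and before the matched-pair conditions can be invoked one must first use the equivariance relations (\ref{rAs1}) together with multiplicativity to move the Hom-maps $\alpha_1,\alpha_2$ into the positions in which they appear in those conditions. Once every term is brought into this \emph{normal form}, I expect each of the six compatibility identities to match one group of mixed terms verbatim, so that both components of $as_{\alpha_1\oplus\alpha_2}(X,Y,Z)$ vanish and Hom-associativity follows.
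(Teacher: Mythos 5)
Your plan is the standard component-wise verification and is correct; in fact the paper gives no proof of this proposition at all (it is quoted from \cite{mnhhs}), but it carries out exactly this strategy in full for the analogous matched-pair proposition for Hom-Leibniz algebras, expanding both associations, splitting into the $A_1$- and $A_2$-components, and cancelling the pure terms by (\ref{HAs}), the single-action terms by (\ref{rAs2})--(\ref{rAs4}), and the mixed terms by the six matched-pair identities, with the three conditions written through $\cdot$ landing in the $A_1$-component and the three through $\bullet$ in the $A_2$-component, just as you predict. The only caveat is that your write-up stops at the organising principle and asserts rather than exhibits the final cancellation, which is where the bookkeeping actually lives.
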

Recall the following notion known as $\mathcal{O}$-operator for Hom-associative algebras in the literature.
\begin{definition}
 Let $(V,\lambda^l,\lambda^r,\phi )$ be a representation of a Hom-associative algebra $(A,\cdot,\alpha).$ A linear
operator $T : V\rightarrow A$ is called a relative Rota-Baxter operator on $(A,\cdot,\alpha)$ with respect to $(V,\lambda^l,\lambda^r,\phi )$ if $T$ satisfies
\begin{eqnarray}
 && T\phi=\alpha T \label{rbHAs1}\\
  && (Tu)\cdot(Tv)=T(\lambda^l(Tu)v+\lambda^r(Tv)u) \mbox{ for all $u,v\in V$} \label{rbHAs2}
\end{eqnarray}
\end{definition}
Observe  that Rota-Baxter operators on Hom-associative algebras are relative Rota-Baxter operators with respect to the regular representation.
\begin{example}\label{exHAsr}
 Consider the $2$-dimensional  Hom-associative algebra 
$(A,\cdot,\alpha)$  where the non-zero products with respect to a basis $(e_1,e_2)$ are given by: $e_1\cdot e_2=e_2\cdot e_1:=-e_1,\ e_2\cdot e_2:=e_1+e_2;$ and $\alpha(e_1):=-e_1,\ \alpha(e_2):=e_1+e_2.$
\\ Then a linear map $T: A\rightarrow A$ defined by  $T(e_1):=a_{11}e_1+a_{21}e_2;\ 
T(e_2):=a_{12}e_1+a_{22}e_2$ is a relative Rota-Baxter on $(A,\cdot,\alpha)$ with respect to the regular representation  if and only if $T\alpha=\alpha T$ and
\begin{eqnarray}
(Te_i)\cdot(Te_j)=T((Te_i)\cdot e_j+e_i\cdot (Te_j)) 
\mbox{ for all $i,j\in \{1,2\}.$}\label{exHAsreq}
\end{eqnarray}
 The condition $\alpha T=T\alpha$ is equivalent to 
\begin{eqnarray}
 a_{21}=0;\  a_{11}+2a_{12}-a_{22}=0.\nonumber
\end{eqnarray}
For $i=j=1,$ the condition (\ref{exHAsreq}) is satisfied trivially.
Similarly, we obtain for \\ $(i,j)\in\{(1,2),(2,1)\}$ 
\begin{eqnarray}
 -a_{11}a_{12}=-a_{11}a_{12}-a_{11}^2;\nonumber 
 \end{eqnarray} 
 and for $i=j=2,$
 \begin{eqnarray}
 -2a_{12}a_{22}+a_{22}^2=-2a_{12}a_{11}+2a_{22}a_{11}+2a_{22}a_{12};\  a_{22}^2=4a_{22}^2.\nonumber
\end{eqnarray}
Summarize the above discussions, we observe that the zero-map  is the only  relative Rota-Baxter operator on $(A,\cdot,\alpha)$  with respect to the regular representation.\\

Until the end of this section, we state and prove some results affirmed in \cite{tcsmam}. These results will be used in the last section on this paper.
\end{example}
\begin{lemma}\label{lHAsRB}
 Let $T$ be a relative Rota-Baxter operator on a Hom-associative algebra $(A,\cdot,\alpha)$ with respect to a representation  $(V,\lambda^l,\lambda^r,\phi ).$ If define a map "$\diamond$" on $V$ by 
 \begin{eqnarray}
  u\diamond v:=\lambda^l(Tu)v+\lambda^r(Tv)u \mbox{ for all $(u, v)\in V^{\times 2}$} \label{opRBHas}
 \end{eqnarray}
then, $(V,\diamond, \phi)$ is a Hom-associative algebra.
\end{lemma}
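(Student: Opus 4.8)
The plan is to verify the two defining properties of a Hom-associative algebra for $(V,\diamond,\phi)$: multiplicativity of $\phi$ with respect to $\diamond$, and the Hom-associativity identity (\ref{HAs}). Throughout, the decisive tool is the defining relation (\ref{rbHAs2}), which in the notation (\ref{opRBHas}) reads $T(u\diamond v)=(Tu)\cdot(Tv)$, together with the intertwining condition (\ref{rbHAs1}), namely $\alpha T=T\phi$.

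First I would check multiplicativity. Expanding $\phi(u\diamond v)=\phi\lambda^l(Tu)v+\phi\lambda^r(Tv)u$ and applying (\ref{rAs1}) together with $\alpha T=T\phi$ turns $\phi\lambda^l(Tu)$ into $\lambda^l(T\phi(u))\phi$ and $\phi\lambda^r(Tv)$ into $\lambda^r(T\phi(v))\phi$; this yields exactly $\phi(u)\diamond\phi(v)$.

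The heart of the proof is the Hom-associativity identity $(u\diamond v)\diamond\phi(w)=\phi(u)\diamond(v\diamond w)$. I would expand each side by the definition (\ref{opRBHas}) and then replace every occurrence of $T(u\diamond v)$ and $T(v\diamond w)$ by $(Tu)\cdot(Tv)$ and $(Tv)\cdot(Tw)$ using (\ref{rbHAs2}). On the left this produces a term $\lambda^l((Tu)\cdot(Tv))\phi(w)$, which (\ref{rAs2}) rewrites as $\lambda^l(T\phi(u))\lambda^l(Tv)w$ (again using $\alpha T=T\phi$), while on the right the symmetric term $\lambda^r((Tv)\cdot(Tw))\phi(u)$ is handled by (\ref{rAs3}). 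After these substitutions both sides share two common summands that cancel, and one is left with a single identity to verify, namely $\lambda^r(T\phi(w))\lambda^l(Tu)v=\lambda^l(T\phi(u))\lambda^r(Tw)v$.

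This last equality is precisely axiom (\ref{rAs4}) applied with $x=Tu$ and $y=Tw$ (once more invoking $\alpha T=T\phi$), so it holds identically. The main obstacle is purely bookkeeping: one must carefully match each of the four representation axioms (\ref{rAs1})--(\ref{rAs4}) to the correct summand and track the $\alpha T=T\phi$ rewrites, so that the cross-terms---those mixing a left action with a right action---reduce exactly to (\ref{rAs4}) rather than leaving an unmatched remainder.
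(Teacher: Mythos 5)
Your proof is correct and follows essentially the same route as the paper: expand both sides of the Hom-associativity identity using $T(u\diamond v)=Tu\cdot Tv$, convert the composite terms via (\ref{rAs2}) and (\ref{rAs3}) together with $\alpha T=T\phi$, and reduce the remaining cross-terms to (\ref{rAs4}). Your write-up is in fact slightly more explicit than the paper's, which leaves the final cancellation to the reader.
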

\begin{proof}
 First, note that the multiplicativity of 
 $\diamond$ with respect to $\phi$ follows from conditions (\ref{rAs1}) and (\ref{rbHAs1}). Next, pick $u,v,w\in V$ and observe from (\ref{opRBHas}) and (\ref{rbHAs2}) that $T(u\diamond v)=Tu\cdot Tv.$ Therefore, by a straightforward computations:
 \begin{eqnarray}
  &&(u\diamond v)\diamond\phi(v)=\lambda^l(Tu\cdot Tv)\phi(w)+\lambda^r(T\phi(w))\lambda^l(Tu)v+\lambda^r(T\phi(w))\lambda^r(Tv)u.\nonumber
 \end{eqnarray}
Similarly, after rearranging terms
\begin{eqnarray}
 &&\phi(u)\diamond (v\diamond w)=\lambda^l(T\phi(u))\lambda^l(Tv)w+
 \lambda^l(T\phi(u))\lambda^r(Tw)v+
 \lambda^r(Tv\cdot Tw)\phi(u)\nonumber
\end{eqnarray}
Hence, the Hom-associativity in $(V,\diamond,\phi)$ follows by (\ref{rbHAs1}) and  $(\ref{rAs2})-(\ref{rAs4}).$
\end{proof}
\begin{corollary}\label{cmorpHas}
 Let $T$ be a relative Rota-Baxter operator on a Hom-associative algebra $(A,\cdot,\alpha)$ with respect to a representation  $(V,\lambda^l,\lambda^r,\phi ).$ Then, $T$ is a morphism from the Hom-associative algebra 
 $(V,\diamond,\phi)$ to the initial Hom-associative algebra $(A,\cdot,\alpha).$
\end{corollary}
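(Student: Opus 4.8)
The plan is to verify directly the two defining conditions of a morphism of Hom-associative algebras for the linear map $T:(V,\diamond,\phi)\to(A,\cdot,\alpha)$, namely compatibility with the structure maps and compatibility with the multiplications. Since Lemma \ref{lHAsRB} already guarantees that $(V,\diamond,\phi)$ is a genuine Hom-associative algebra, no structural work remains to be done on the source; everything reduces to re-reading the two axioms of a relative Rota-Baxter operator.

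First I would check that $T$ intertwines the structure maps, that is, $T\circ\phi=\alpha\circ T$. This is precisely the first defining axiom (\ref{rbHAs1}) of a relative Rota-Baxter operator, so it holds by hypothesis and $T$ is at once a morphism of the underlying Hom-modules. Next I would verify multiplicativity, i.e. $T(u\diamond v)=Tu\cdot Tv$ for all $u,v\in V$. Unwinding the definition (\ref{opRBHas}) of $\diamond$ gives $T(u\diamond v)=T\bigl(\lambda^l(Tu)v+\lambda^r(Tv)u\bigr)$, and by the second defining axiom (\ref{rbHAs2}) the right-hand side equals $(Tu)\cdot(Tv)$. This is exactly the identity $T(u\diamond v)=Tu\cdot Tv$ already recorded inside the proof of Lemma \ref{lHAsRB}, so it can simply be cited.

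Combining the two steps, $T$ satisfies both conditions defining a morphism of Hom-associative algebras from $(V,\diamond,\phi)$ to $(A,\cdot,\alpha)$, which finishes the argument. There is no genuine obstacle here: the corollary is an immediate reformulation of the two axioms (\ref{rbHAs1}) and (\ref{rbHAs2}) of a relative Rota-Baxter operator, once the induced product $\diamond$ on $V$ from Lemma \ref{lHAsRB} is in place.
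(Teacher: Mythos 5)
Your proof is correct and matches the paper's (implicit) argument: the paper states this corollary without proof precisely because, as you observe, the intertwining condition $T\phi=\alpha T$ is axiom (\ref{rbHAs1}) and the multiplicativity $T(u\diamond v)=Tu\cdot Tv$ is just (\ref{rbHAs2}) rewritten via (\ref{opRBHas}), a fact already recorded inside the proof of Lemma \ref{lHAsRB}. Nothing further is needed.
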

\begin{theorem}\label{tHasRB}
 Let $T$ be a relative Rota-Baxter operator on a Hom-associative algebra $(A,\cdot,\alpha)$ with respect to a representation  $(V,\lambda^l,\lambda^r,\phi ).$  
 Then, $(A,\overline{\lambda^l}, \overline{\lambda^r},\alpha)$ is a representation of the Hom-associative algebra
 $(V,\diamond,\phi)$ where
 \begin{eqnarray}
 && \overline{\lambda^l}(u)x:=(Tu)\cdot x-T\lambda^r(x)u   \mbox{ for all $(x,u)\in A\times V$}\label{rrba1}\\
 && \overline{\lambda^r}(u)x:=x\cdot(Tu)-T\lambda^l(x)u  \mbox{ for all $(x,u)\in A\times V$}\label{rrba2}
 \end{eqnarray}
 \end{theorem}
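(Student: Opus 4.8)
The plan is to verify directly the four defining axioms of a representation, namely (\ref{rAs1})--(\ref{rAs4}), now read with the Hom-associative algebra $(V,\diamond,\phi)$ playing the role of the acting algebra, the Hom-module $(A,\alpha)$ as the representation space, and $\overline{\lambda^l},\overline{\lambda^r}$ as the left and right actions. The tools I would use are: the relative Rota-Baxter conditions (\ref{rbHAs1}) and (\ref{rbHAs2}); the consequence $T(u\diamond v)=Tu\cdot Tv$ already recorded in the proof of Lemma \ref{lHAsRB} (equivalently, Corollary \ref{cmorpHas}); the Hom-associativity (\ref{HAs}) and multiplicativity of $(A,\cdot,\alpha)$; and the four axioms (\ref{rAs1})--(\ref{rAs4}) for the original representation $(V,\lambda^l,\lambda^r,\phi)$.

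First I would check (\ref{rAs1}), i.e. $\alpha\,\overline{\lambda^l}(u)=\overline{\lambda^l}(\phi(u))\,\alpha$ and the analogous identity for $\overline{\lambda^r}$. Applying $\alpha$ to $\overline{\lambda^l}(u)x=(Tu)\cdot x-T\lambda^r(x)u$ and using multiplicativity of $\alpha$, the relation $\alpha T=T\phi$ from (\ref{rbHAs1}), and (\ref{rAs1}) for $\mathcal V$ reproduces $\overline{\lambda^l}(\phi(u))\alpha(x)$; this is the routine base case. For (\ref{rAs2}) and (\ref{rAs3}) the strategy is the same on both sides: expand $\overline{\lambda^l}(u\diamond v)\alpha(x)$, replace $T(u\diamond v)$ by $Tu\cdot Tv$, and rewrite the resulting pure product $(Tu\cdot Tv)\cdot\alpha(x)$ by Hom-associativity as $\alpha(Tu)\cdot(Tv\cdot x)=(T\phi(u))\cdot(Tv\cdot x)$; then expand the right-hand side $\overline{\lambda^l}(\phi(u))\overline{\lambda^l}(v)x$ and use the Rota-Baxter identity (\ref{rbHAs2}) to pull $T$ out of the mixed terms. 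The leftover $T(\cdots)$ terms are then matched by invoking (\ref{rAs3}) and (\ref{rAs4}) for (\ref{rAs2}), respectively (\ref{rAs2}) and (\ref{rAs4}) for (\ref{rAs3}), so as to convert expressions of the form $\lambda^r(\alpha(x))\lambda^l(Tu)$ and $\lambda^r(\alpha(x))\lambda^r(Tv)$ into the shape produced by the other side.

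The hard part will be the compatibility axiom (\ref{rAs4}), $\overline{\lambda^l}(\phi(u))\overline{\lambda^r}(v)=\overline{\lambda^r}(\phi(v))\overline{\lambda^l}(u)$, because here the left and right actions are intertwined and essentially all of the available identities are needed at once. Expanding both composites produces, besides a symmetric pure-product term handled by Hom-associativity, several correction terms of the form $T\lambda^{\bullet}\!\big(T\lambda^{\bullet}(x)\,\cdot\,\big)$ together with terms $\big(T\phi(\cdot)\big)\cdot\big(T\lambda^{\bullet}(x)\,\cdot\,\big)$; the delicate bookkeeping is to show that after applying (\ref{rAs4}) for $\mathcal V$ and the Rota-Baxter relation (\ref{rbHAs2}) the two sides coincide term by term. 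I expect this matching to be the only genuinely error-prone computation; the remaining axioms reduce, as above, to a single application of Hom-associativity followed by one use of (\ref{rbHAs2}) and one representation axiom for $\mathcal V$.
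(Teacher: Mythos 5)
Your plan coincides with the paper's proof: both verify (\ref{rAs1})--(\ref{rAs4}) directly by expanding the deformed actions, substituting $T(u\diamond v)=Tu\cdot Tv$, rewriting the pure product via Hom-associativity and $\alpha T=T\phi$, and using (\ref{rbHAs2}) to pull $T$ out of the mixed terms before matching the remainders against the axioms of $(V,\lambda^l,\lambda^r,\phi)$. One small correction to your sketch of the last case: the leftover terms in the compatibility axiom are matched by (\ref{rAs2}) and (\ref{rAs3}) for $\mathcal{V}$ (which convert $\lambda^l(Tu\cdot x)\phi$ into $\lambda^l(T\phi(u))\lambda^l(x)$ and $\lambda^r(x\cdot Tv)\phi$ into $\lambda^r(T\phi(v))\lambda^r(x)$), not by (\ref{rAs4}) as you anticipated.
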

 \begin{proof}
  First,  (\ref{rAs1}) in $(A,\overline{\lambda^l}, \overline{\lambda^r},\alpha)$ follows from the one in $(V,\lambda^l,\lambda^r,\phi ),$  the multiplicativity of $\cdot$ with respect to $\alpha$ and (\ref{rbHAs1}). Next, for all 
  $x\in A$ and $(u,v)\in V^{\times 2},$ if one observes from Corollary \ref{cmorpHas} that $T(u\diamond v)=Tu\cdot Tv,$ we compute:
  \begin{eqnarray}
   &&\overline{\lambda^l}(u\diamond v)\alpha(x)=(Tu\cdot Tv)\cdot\alpha(x)-T\lambda^r(\alpha(x))\lambda^l(Tu)v-T\lambda^r()\alpha(x))\lambda^r(Tv)u\nonumber\\
   &&=(T\phi(u))\cdot(Tv\cdot x)-T\lambda^l(T\phi(u))\lambda^r(x)v-T\lambda^r()\alpha(x))\lambda^r(Tv)u \mbox{ ( by (\ref{rbHAs1}), (\ref{HAs}) and (\ref{rAs4}) ).}\nonumber
  \end{eqnarray}
Similarly, by straightforward computations:
\begin{eqnarray}
 &&\overline{\lambda^l}(\phi(u))\overline{\lambda^l}(v)x=(T\phi(u))\cdot(Tv\cdot x)-T\phi(u)\cdot T\lambda^r(x)v-T\lambda^r(Tv\cdot x)\phi(u)\nonumber\\&&+T\lambda^r(T\lambda^r(x)v)\phi(u)=
 (T\phi(u))\cdot(Tv\cdot x)-
 T\lambda^l(T\phi(u))\lambda^r(x)v-T\lambda^r(T\lambda^r(x)v)\phi(u)\nonumber\\
 &&-T\lambda^r(Tv\cdot x)\phi(u)+T\lambda^r(T\lambda^r(x)v)\phi(u) \mbox{ ( by (\ref{rbHAs2}) )}\nonumber\\
 &&=(T\phi(u))\cdot(Tv\cdot x)-
 T\lambda^l(T\phi(u))\lambda^r(x)v-T\lambda^r(Tv\cdot x)\phi(u).\nonumber
\end{eqnarray}
Hence, we obtain (\ref{rAs2}) in $(A,\overline{\lambda^l}, \overline{\lambda^r},\alpha)$ by (\ref{rAs3}) in 
 $(V,\lambda^l,\lambda^r,\phi ).$ Next to prove (\ref{rAs3}) in  $(A,\overline{\lambda^l}, \overline{\lambda^r},\alpha)$, let proceed  as the previous case, i.e.,
 \begin{eqnarray}
  &&\overline{\lambda^r}(u\diamond v)\alpha(x)=\alpha(x)\cdot(Tu\cdot Tv)-
  T\lambda^l(\alpha(x))\lambda^l(Tu)v-T\lambda^l(\alpha(x))\lambda^r(Tv)u,\nonumber\\
  &&\overline{\lambda^r}(\phi(v))\overline{\lambda^r}(u)x=(x\cdot Tu)\cdot T\phi(v)-(T\lambda^l(x)u)\cdot T\phi(v)-
  T\lambda^l(x\cdot Tu)\phi(v)+T\lambda^l(T\lambda^l(x)u)\phi(v)\nonumber\\
  &&(x\cdot Tu)\cdot T\phi(v)-T\lambda^r(T\phi (v))\lambda^l(x)u -
  T\lambda^l(x\cdot Tu)\phi(v) \mbox{ ( by (\ref{rbHAs2}) ).}\nonumber
 \end{eqnarray}
Hence, we obtain (\ref{rAs3}) by (\ref{rbHAs1}) and (\ref{HAs}), (\ref{rAs2}), (\ref{rAs4}) in $(V,\lambda^l,\lambda^r,\phi ).$ Finally, we compute:
 \begin{eqnarray}
  &&\overline{\lambda^l}(\phi(u))\overline{\lambda^r}(v)x=T\phi(u)\cdot(x\cdot Tv)-T\phi(u)\cdot T\lambda^l(x)v-T\lambda^r(x\cdot Tv)\phi(u)+T\lambda^r(T\lambda^l(x)v)\phi(u)\nonumber\\
  &&=T\phi(u)\cdot(x\cdot Tv)-T\lambda^l(T\phi(u))\lambda^l(u)v-T\lambda^r(T\lambda^l(x)v)\phi(u)-T\lambda^r(x\cdot Tv)\phi(u)\nonumber\\&&+T\lambda^r(T\lambda^l(x)v)\phi(u) \mbox{ ( by (\ref{rbHAs2}) )}\nonumber\\
  &&=T\phi(u)\cdot(x\cdot Tv)-T\lambda^l(T\phi(u))\lambda^l(u)v-T\lambda^r(x\cdot Tv)\phi(u).\nonumber
 \end{eqnarray}
Similarly, after a straightforward computation, we get also by (\ref{rbHAs2}):
\begin{eqnarray}
 &&\overline{\lambda^r}(\phi(v))\overline{\lambda^l}(u)x= (Tu\cdot x)\cdot T\phi(v)-T\lambda^r(T\phi(v))\lambda^r(x)u-T\lambda^l(Tu\cdot x)\phi(v).\nonumber
\end{eqnarray}
Therefore, we obtain the desired identity by (\ref{rbHAs1}), (\ref{HAs}), (\ref{rAs2}) and (\ref{rAs3}).
 \end{proof}
\section{Matched pair and relative Rota-Baxter operator on Hom-Leibniz algebras}
This section is primarily dedicated to the study of relative Rota-Baxter operators for (right) Hom-Leibniz algebras.  Although some of the results obtained here are treated for left Hom-Leibniz algebras \cite{sgsw}, we will also treat them in the right Hom-Leibniz algebras. Indeed, these were needed for the understanding of the last section which is full of the fundamental results of this paper.
\begin{definition}\cite{ycys}
 A representation of a Hom-Leibniz algebra $(A, [,], \alpha)$  is a quadruple $(V,\phi,\rho^l,\rho^r)$ where $V$ is a vector space, $\phi\in gl(V)$ and $\rho^l, \rho^r: A\rightarrow gl(V)$ are three linear maps
such that the following equalities hold for all $x, y \in A:$
\begin{eqnarray}
 \rho^l(\alpha(x))\phi=\phi\rho^l(x);\ 
 \rho^l(\alpha(x))\phi=\phi\rho^l(x) \label{rHL1}\\
 \rho^l([x,y])\phi=\rho^l(\alpha(x))\rho^l(y)+\rho^r(\alpha(y))\rho^l(x)\label{rHL2}\\
 \rho^r(\alpha(y))\rho^l(x)=
 \rho^l(\alpha(x))\rho^r(y)
 +\rho^l([x,y])\phi\label{rHL3} \\
 \rho^r(\alpha(y))\rho^r(x)=\rho^r([x,y])\phi+\rho^r(\alpha(x))\rho^r(y) \label{rHL4}
\end{eqnarray}
\end{definition}
Observe that  (\ref{rHL4}) gives rise to the useful equation:
\begin{eqnarray}
 \rho^r([x,y])\phi+\rho^r([y,x])\phi=0 \mbox{ for all $(x,y)\in A^{\times 2}$}\label{rHL4r}
\end{eqnarray}
To give examples of representations of Hom-Leibniz algebras, let prove:
\begin{proposition}\label{PEx2}
 Let $\mathcal{A}_1:=(A_1,[,]_1,\alpha_1)$ and $\mathcal{A}_2:=(A_2,[,]_2,\alpha_2)$ be two Hom-Leibniz algebras and $f:\mathcal{A}_1\rightarrow \mathcal{A}_2$ be a morphism of Hom-Leibniz algebras. Then $(A_2, \rho^l,\rho^r,\alpha_2)$ is a representation of $\mathcal{A}_1$ where
 $\rho^l(a)b:=[f(a),b]_2$ and $\rho^r(a)b:=[b,f(a)]_2$ for all $(a,b)\in A\times B.$
\end{proposition}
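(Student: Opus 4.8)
The plan is to follow the same pattern as the proof of Proposition~\ref{PEx1}, transporting the computation from the Hom-associative setting to the Hom-Leibniz one. The only facts I will need are that $f$ is a morphism, i.e. $f\alpha_1=\alpha_2 f$ and $f([x,y]_1)=[f(x),f(y)]_2$, the multiplicativity of $[,]_2$ with respect to $\alpha_2$, and the Hom-Leibniz identity (\ref{leib}) in $\mathcal{A}_2$. Throughout, the candidate representation is $(A_2,\rho^l,\rho^r,\alpha_2)$, so in (\ref{rHL1})--(\ref{rHL4}) the roles are played by $\phi=\alpha_2$ and $\alpha=\alpha_1$.

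First I would check (\ref{rHL1}). For $c\in A_2$ I compute $\alpha_2(\rho^l(x)c)=\alpha_2[f(x),c]_2=[\alpha_2 f(x),\alpha_2(c)]_2=[f\alpha_1(x),\alpha_2(c)]_2=\rho^l(\alpha_1(x))\alpha_2(c)$, where the second equality is multiplicativity of $[,]_2$ and the third is $f\alpha_1=\alpha_2 f$; the computation for $\rho^r$ is identical. This yields the intertwining relations (\ref{rHL1}).

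The heart of the argument is that each of (\ref{rHL2}), (\ref{rHL3}) and (\ref{rHL4}) follows from a single application of the Hom-Leibniz identity (\ref{leib}) in $\mathcal{A}_2$, after rewriting $\rho^l,\rho^r$ through $f$ and using $f([x,y]_1)=[f(x),f(y)]_2$. Concretely, for (\ref{rHL2}) I would expand the left-hand side as $\rho^l([x,y]_1)\alpha_2(c)=[[f(x),f(y)]_2,\alpha_2(c)]_2$ and apply (\ref{leib}) with arguments $(f(x),f(y),c)$; the two resulting summands $[\alpha_2 f(x),[f(y),c]_2]_2$ and $[[f(x),c]_2,\alpha_2 f(y)]_2$ are exactly $\rho^l(\alpha_1(x))\rho^l(y)c$ and $\rho^r(\alpha_1(y))\rho^l(x)c$. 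For (\ref{rHL3}) I would instead apply (\ref{leib}) to $[[f(x),c]_2,\alpha_2 f(y)]_2$ (argument order $(f(x),c,f(y))$), and for (\ref{rHL4}) to $[[c,f(x)]_2,\alpha_2 f(y)]_2$ (argument order $(c,f(x),f(y))$); in each case the two summands match the two terms of the corresponding axiom once $\alpha_2$ is moved past $f$.

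The computations are routine and I do not anticipate a genuine obstacle. Unlike constructions that would invoke the compatibility condition (\ref{cHLP}), here a single operation (the bracket of $\mathcal{A}_2$) governs all four axioms, so no interplay between two products is involved. The only point requiring care is bookkeeping: choosing the correct order of the three arguments in (\ref{leib}) so that, after expansion, the vector $c$ and the acting elements $f(x),f(y)$ occupy the bracket slots dictated by the definitions $\rho^l(a)b=[f(a),b]_2$ and $\rho^r(a)b=[b,f(a)]_2$.
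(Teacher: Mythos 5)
Your proposal is correct and follows essentially the same route as the paper: establish (\ref{rHL1}) exactly as in Proposition \ref{PEx1}, then derive each of (\ref{rHL2})--(\ref{rHL4}) by a single application of the Hom-Leibniz identity (\ref{leib}) in $\mathcal{A}_2$ with the argument orders $(f(x),f(y),z)$, $(f(x),z,f(y))$ and $(z,f(x),f(y))$ respectively, using $f\alpha_1=\alpha_2 f$ and $f([x,y]_1)=[f(x),f(y)]_2$ to identify the resulting terms. The choices of argument order you specify match the paper's computations (the last being the paper's ``in a similar way'' case), so there is nothing to add.
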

\begin{proof}
 As (\ref{rAs1}) in Proposition \ref{PEx1}, (\ref{rHL1}) holds. Next, let $(x,y)\in A_1^{\times 2}$ and $z\in A_2.$  Then,  using $f$ is a morphism and (\ref{leib}), we compute 
 \begin{eqnarray}
  &&\rho^l([x,y]_1)\alpha_2(z)=
  [f([x,y]_1),\alpha_2(z)]_2=
  [[f(x),f(y)]_2,\alpha_2(z)]_2=
  [\alpha_2f(x),[f(y),z]_2]_2\nonumber\\
  &&+[[f(x),z]_2,\alpha_2f(y)]_2=
  [f\alpha_1(x),[f(y),z]_2]_2+[[f(x),z]_2,f\alpha_1(y)]_2\nonumber\\
  &&=\rho^l(\alpha_1(x))\rho^l(y)z+\rho^r(\alpha_1(y))\rho^l(x)z \mbox{ and } \nonumber\\
  &&\rho^r(\alpha_1(y))\rho^l(x)z=[[f(x),z]_2,f\alpha_1(y)]_2=
  [[f(x),z]_2,\alpha_2f(y)]_2=
  [\alpha_2f(x),[z,f(y)]_2]_2\nonumber\\
  &&+[[f(x),f(y)]_2,\alpha_2(z)]_2=
  [f\alpha_1(x),[z,f(y)]_2]_2+[f([x,y]_1),\alpha_2(z)]_2\nonumber\\
 && =\rho^l(\alpha_1(x))\rho^r(y)z+\rho^l([x,y]_1)\alpha_2(z).\nonumber  
 \end{eqnarray}
Therefore, we get (\ref{rHL2}) and (\ref{rHL3}). In a similar way, we prove (\ref{rHL4}).
\end{proof}

As Hom-associative case, using Proposition \ref{PEx2}, we get:
\begin{example}
\begin{enumerate}
 \item Let $(A, [,], \alpha)$ be a Hom-Leiniz algebra.
 Define a left multiplication 
 $ l: A\rightarrow gl(A)$ and a
  right multiplication $r: A\rightarrow gl(A)$ by $l(x)y:=[x,y]$ and $ r(x)y:=[y,x]$  for all $x, y \in A.$ Then $(A, l,r,\alpha)$ is a
representation of $(A, [,], \alpha),$ which is called a regular representation.
 \item Let $(A,[,],\alpha)$ be a Hom-Leibniz  algebra and $(B,\alpha)$ be a two-sided Hom-ideal of $(A,[,],\alpha).$ Then $(B,\alpha)$  inherits a structure of representation of $(A,[,],\alpha)$ where 
 $\rho^l(a)b:=[a,b];\ \rho^r(b,a):=[b,a]$ for all $(a,b)\in A\times B.$
\end{enumerate}
\end{example}
\begin{proposition}\label{sRHLeib}
 Let $\mathcal{V}:=(V, \rho^l,\rho^r,\phi)$ be a representation of a Hom-Leibniz algebra $\mathcal{A}:=(A,[,],\alpha)$ and $\beta$ be a self-morphism of $\mathcal{A}:=(A,[,],\alpha)$. Then 
 $\mathcal{V}_{\beta}:=(V,\rho^l_{\beta}:=\rho^l\beta,\rho^r_{\beta}:=\rho^r\beta,\phi)$ is a representation of $\mathcal{A}.$ 
\end{proposition}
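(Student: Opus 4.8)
The plan is to verify, one at a time, the four defining identities (\ref{rHL1})--(\ref{rHL4}) of a representation for the candidate $\mathcal{V}_{\beta}=(V,\rho^l\beta,\rho^r\beta,\phi)$, reducing each to the corresponding identity already known to hold for $\mathcal{V}$. The two structural facts I would lean on throughout are that $\beta$ is a self-morphism of $\mathcal{A}$, namely $\beta\alpha=\alpha\beta$ and $\beta([x,y])=[\beta(x),\beta(y)]$ for all $x,y\in A$. Note also that $\phi$ is left unchanged in passing from $\mathcal{V}$ to $\mathcal{V}_{\beta}$, so no new conditions on $\phi$ arise. This is exactly the Hom-Leibniz analogue of Proposition \ref{sRHas}, and the same template applies.

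First I would treat (\ref{rHL1}): since $\rho^l_{\beta}(\alpha(x))\phi=\rho^l(\beta\alpha(x))\phi=\rho^l(\alpha\beta(x))\phi$, the commutation $\beta\alpha=\alpha\beta$ together with (\ref{rHL1}) for $\mathcal{V}$ pushes $\phi$ across to give $\phi\rho^l(\beta(x))=\phi\rho^l_{\beta}(x)$; the identical computation with $\rho^r$ in place of $\rho^l$ settles the remaining half of (\ref{rHL1}). Next, for (\ref{rHL2}) I would expand the left-hand side as $\rho^l_{\beta}([x,y])\phi=\rho^l(\beta([x,y]))\phi=\rho^l([\beta(x),\beta(y)])\phi$ via the morphism property, then apply (\ref{rHL2}) for $\mathcal{V}$ at the arguments $\beta(x),\beta(y)$, and finally convert every occurrence of $\alpha\beta$ into $\beta\alpha$ to recognise the outcome as $\rho^l_{\beta}(\alpha(x))\rho^l_{\beta}(y)+\rho^r_{\beta}(\alpha(y))\rho^l_{\beta}(x)$.

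Identities (\ref{rHL3}) and (\ref{rHL4}) follow by the identical three-move pattern: insert $\beta$ through the morphism property wherever a bracket $[x,y]$ appears, invoke the corresponding identity for $\mathcal{V}$ evaluated at $\beta(x),\beta(y)$, and commute $\alpha$ past $\beta$. I would spell out (\ref{rHL3}) in full and simply remark that (\ref{rHL4}) is entirely analogous, since writing both out adds no mathematical content.

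I do not expect a genuine obstacle here: every step is forced once the morphism property and the commutation $\alpha\beta=\beta\alpha$ are available, and no new compatibility condition needs to be discovered. The only point demanding care is bookkeeping---ensuring that $\beta$ is inserted on the correct side of each $\rho$, and that the morphism identity is applied exactly where a bracket $[x,y]$ occurs rather than where $\alpha$ alone acts---since confusing these two situations is the one way this otherwise routine verification could silently fail.
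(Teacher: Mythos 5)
Your proposal is correct and follows essentially the same route as the paper's own proof: verify each of (\ref{rHL1})--(\ref{rHL4}) for $\mathcal{V}_{\beta}$ by inserting $\beta$ via the morphism property where a bracket appears, invoking the corresponding identity for $\mathcal{V}$ at $\beta(x),\beta(y)$, and commuting $\alpha$ past $\beta$. The paper likewise writes out (\ref{rHL1})--(\ref{rHL3}) and dismisses (\ref{rHL4}) as analogous, so nothing is missing.
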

\begin{proof} Let $x,y\in A.$
 First, by (\ref{rHL1}) in $\mathcal{V}$ and the condition $\alpha\beta=\beta\alpha$ we have 
 \begin{eqnarray}
  \phi\rho^l_{\beta}(x)=\phi\rho^l(\beta(x))=\rho^l(\alpha\beta(x))\phi=\rho^l(\beta\alpha(x))\phi=\rho^l_{\beta}(\alpha(x))\phi.\nonumber
 \end{eqnarray}
Similarly, we prove $\phi\rho^r_{\beta}(x)=\rho^r_{\beta}(\alpha(x))\phi$ and hence, we obtain (\ref{rHL1}) for $\mathcal{V}_{\beta}.$ Next, by (\ref{rHL2}) in $\mathcal{V}$ and the fact that $\beta$ is a morphism, we get:
\begin{eqnarray}
 &&\rho^l_{\beta}([x,y])\phi=\rho^l(\beta([x,y]))\phi=\rho^l([\beta(x),\beta(y)])\phi=\rho^l(\alpha\beta(x))\rho^l(\beta(y))\nonumber\\
 &&+\rho^r(\alpha\beta(y))\rho^l(\beta(x))
 =\rho^l(\beta\alpha(x))\rho^l(\beta(y))
 +\rho^r(\beta\alpha(y))\rho^l(\beta(x))\nonumber\\
 &&=\rho^l_{\beta}(\alpha(x))\rho^l_{\beta}(y)+\rho^r_{\beta}(\alpha(y))\rho^l_{\beta}(x)\nonumber
\end{eqnarray}
i.e., (\ref{rHL2}) holds  for $\mathcal{V}_{\beta}.$ 
 Finally, using $\alpha\beta=\beta\alpha$  and (\ref{rHL3}) for $\mathcal{V},$ we compute
\begin{eqnarray}
 &&\rho^r_{\beta}(\alpha(y))\rho^l_{\beta}(x)=\rho^r(\beta\alpha(y))\rho^l(\beta(x))
 =\rho^r(\alpha\beta(y))\rho^l(\beta(x))=
 \rho^l(\alpha\beta(x))\rho^r(\beta(y))\phi\nonumber\\
 &&+\rho^l([\beta(x),\beta(y)])=
 \rho^l(\beta\alpha(x))\rho^r(\beta(y))+\rho^l(\beta([x,y]))=
 \rho^l_{\beta}(\alpha(x))\rho^r_{\beta}(y)+\rho^l_{\beta}([x,y]).\nonumber
\end{eqnarray}
Therefore, we obtain (\ref{rHL3})  and similarly (\ref{rHL4}) for $\mathcal{V}_{\beta}.$ 
\end{proof}
\begin{proposition}\label{spHLa}
 Let $(A,[,],\alpha)$ be a Hom-Leibniz algebra and let $(V,\phi)$ be a Hom-module. Let $ \rho^l,\rho^r : A\rightarrow gl(V)$ be two linear maps. The quadruple $(V,\rho^l,\rho^r,\phi )$ is a representation of $(A,[,],\alpha)$  if
and only if the direct sum of vector spaces $A\oplus V$   turns into a 
Hom-Leibniz algebra by defining a multiplication and a linear map  by
\begin{eqnarray}
  \{(x+u),(y+v)\}:=[x,y]+(\rho^l(x)v+\rho^r(y)u)\label{sdpHL1}\\
 (\alpha\oplus\phi)(x+u):=\alpha(x)+\phi(u)\label{sdpHL2}
\end{eqnarray}
\end{proposition}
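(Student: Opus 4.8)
The plan is to mimic the proof of the Hom-associative analogue (Proposition \ref{spHAs}): I would verify directly that the structure on $A\oplus V$ defined by (\ref{sdpHL1})--(\ref{sdpHL2}) encodes, component by component, exactly the representation axioms (\ref{rHL1})--(\ref{rHL4}). Because both implications reduce to the same list of term-by-term identities, proving the equivalence amounts to a single computation read in two directions.

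First I would dispose of the multiplicativity of $\alpha\oplus\phi$ with respect to the bracket (\ref{sdpHL1}). Writing out both $(\alpha\oplus\phi)\{x+u,y+v\}$ and $\{(\alpha\oplus\phi)(x+u),(\alpha\oplus\phi)(y+v)\}$, the $A$-component agrees because $\alpha$ is multiplicative for $[,]$, while the $V$-component gives $\phi\rho^l(x)v+\phi\rho^r(y)u$ on one side and $\rho^l(\alpha(x))\phi(v)+\rho^r(\alpha(y))\phi(u)$ on the other; since $u,v$ range independently over $V$, these match precisely when the intertwining relations (\ref{rHL1}) hold.

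Next I would expand the Hom-Leibniz identity (\ref{leib}) on $A\oplus V$ for the triple $X=x+u$, $Y=y+v$, $Z=z+w$. The $A$-component of each of the three brackets involves only $[,]$ on $A$, so the $A$-part of the identity is just (\ref{leib}) in $(A,[,],\alpha)$ and holds automatically; all the content lives in the $V$-component. The crucial bookkeeping step is to collect the resulting $V$-terms according to which input vector $u,v,w$ they act on. The coefficient of $w$ yields $\rho^l([x,y])\phi=\rho^l(\alpha(x))\rho^l(y)+\rho^r(\alpha(y))\rho^l(x)$, which is (\ref{rHL2}); the coefficient of $v$ yields $\rho^r(\alpha(z))\rho^l(x)=\rho^l(\alpha(x))\rho^r(z)+\rho^l([x,z])\phi$, which is (\ref{rHL3}); and the coefficient of $u$ yields $\rho^r(\alpha(z))\rho^r(y)=\rho^r([y,z])\phi+\rho^r(\alpha(y))\rho^r(z)$, which is (\ref{rHL4}).

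I expect the main obstacle to be organizational rather than conceptual: because the Hom-Leibniz identity is not symmetric, the three slots occupied by $u,v,w$ are not interchangeable, and one must be disciplined about tracking which of the three brackets contributes the $\rho^l$ versus the $\rho^r$ action in each slot (this is what distinguishes the right-Leibniz bookkeeping here from the left-Leibniz case). Once the three coefficients are isolated, the forward direction follows by reading (\ref{rHL2})--(\ref{rHL4}) as sufficient to make each slot vanish, and the converse follows by reading the same three identities as necessary, using that $u,v,w$ vary independently to separate the three contributions. This completes the equivalence.
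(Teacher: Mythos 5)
Your proposal is correct and follows essentially the same route as the paper: multiplicativity of $\alpha\oplus\phi$ is identified with (\ref{rHL1}), and the $V$-component of the Hom-Leibniz identity on $A\oplus V$ is sorted by which of $u,v,w$ each term acts on, yielding exactly (\ref{rHL2}), (\ref{rHL3}), (\ref{rHL4}) respectively; the independence of $u,v,w$ gives both directions of the equivalence, just as in the paper's computation.
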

\begin{proof}
 Firs observe  that the multiplicativity of
 $\alpha\oplus\phi$ with respect to $\{,\}$ is equivalent to (\ref{rHL1}).
 Next, pick $x,y,z\in A$ and $u,v,w\in V.$ Then, by a straightforward computation, we obtain
 \begin{eqnarray}
  &&\{\{x+u,y+v\},(\alpha\oplus\phi)(z+w)\}=\underbrace{[[x,y],\alpha(z)]}_{(i)}+
  \rho^l([x,y])\phi(w)\nonumber\\
  &&+\rho^r(\alpha(z))\rho^l(x)v+\rho^r(\alpha(z))\rho^r(y)u,\nonumber\\
 && \{(\alpha\oplus\phi)(x+u),\{y+v,z+w\}\}
 =[\alpha(x),[y,z]]+\rho^l(\alpha(x))\rho^l(y)w+\rho^l(\alpha(x))\rho^r(z)v\nonumber\\
 &&+\rho^r([y,z])\phi(u),\nonumber\\
 &&\{\{x+u,y+v\},(\alpha\oplus\phi)(z+w)\}=
 [[x,z],\alpha(y)+\rho^l([x,z])\phi(v)+\rho^r(\alpha(y))\rho^l(x)w\nonumber\\
 &&+\rho^r(\alpha(y))\rho^r(z)u.\nonumber
 \end{eqnarray}
Hence, by (\ref{leib}) for (i), we get:
\begin{eqnarray}
 &&\{\{x+u,y+v\},(\alpha\oplus\phi)(z+w)\}-
 \{(\alpha\oplus\phi)(x+u),\{y+v,z+w\}\}
 \nonumber\\
 &&-\{\{x+u,y+v\},(\alpha\oplus\phi)(z+w)\}
 =\Big(\rho^l([x,y])\phi(w)-\rho^l(\alpha(x))\rho^l(y)w-\rho^r(\alpha(y))\rho^l(x)w\Big)
 \nonumber\\
 &&\Big(\rho^r(\alpha(z))\rho^l(x)v-\rho^l(\alpha(x))\rho^r(z)v-\rho^l([x,z])\phi(v)\Big)+\Big(\rho^r(\alpha(z))\rho^r(y)u-\rho^r([y,z])\phi(u)\nonumber\\
 &&-\rho^r(\alpha(y))\rho^r(z)u\Big).\nonumber
\end{eqnarray}
Therefore, (\ref{leib}) holds if and only if (\ref{rHL2}), (\ref{rHL3}) and (\ref{rHL4}) hold.
\end{proof}
 Let give the following:
\begin{definition}
 Let $\mathcal{A}_1:=(A_1, [,]_1,\alpha_1)$ and 
 $\mathcal{A}_2:=(A_2, [,]_2,\alpha_2)$ be two Hom-Leibniz algebras. Let $\rho_1^l,\rho_1^r: A_1\rightarrow gl(A_2)$ and 
 $\rho_2^l,\rho_2^r: A_2\rightarrow gl(A_1)$ be linear maps such that $(A_2,\rho_1^l,\rho_1^r, \alpha_2)$ is a representation of $\mathcal{A}_1,$ 
 $(A_1,\rho_2^l,\rho_2^r, \alpha_1)$ is a representation of $\mathcal{A}_2$ and the following conditions hold
 \begin{eqnarray}
  &&\rho_1^r(\alpha_1(x))[u,v]_2-[\alpha_2(u),\rho_1^r(x)v]_2-[\rho_1^r(x)u,\alpha_2(v)]_2
  -\rho_1^r(\rho_2^l(v)x)\alpha_2(u)\nonumber\\
  &&-\rho_1^l(\rho_2^l(u)x)\alpha_2(v)=0\label{mpHL1}\\
 && \rho_1^l(\alpha_1(x))[u,v]_2-[\rho_1^l(x)u,\alpha_2(v)]_2+[\rho_1^l(x)v,\alpha_2(u)]_2
  -\rho_1^l(\rho_2^r(u)x)\alpha_2(v)\nonumber\\
  &&+
  \rho_1^l(\rho_2^r(v)x)\alpha_2(u)=0\label{mpHL2}\\
  &&\rho_1^r(\alpha_1(x))[u,v]_2-[\rho_1^r(x)u,\alpha_2(v)]_2+[\alpha_2(u),\rho_1^l(x)v]_2-\rho_1^l(\rho_2^l(u)x)\alpha_2(v)\nonumber\\
  &&+
  \rho_1^r(\rho_2^r(v)x)\alpha_2(u)=0
  \label{mpHL3}\\
  &&\rho_2^r(\alpha_2(u))[x,y]_1-[\alpha_1(x),\rho_2^r(u)y]_1-[\rho_2^r(u)x,\alpha_1(y)]_1
  -\rho_2^r(\rho_1^l(y)u)\alpha_2(x)\nonumber\\
  &&-\rho_2^l(\rho_1^l(x)u)\alpha_1(y)=0\label{mpHL4}\\
  && \rho_2^l(\alpha_2(u))[x,y]_1-[\rho_2^l(u)x,\alpha_1(y)]_2+[\rho_2^l(u)y,\alpha_1(x)]_1
  -\rho_2^l(\rho_1^r(x)u)\alpha_1(y)\nonumber\\
  &&+
  \rho_2^l(\rho_1^r(y)u)\alpha_1(x)=0\label{mpHL5}\\
  &&\rho_2^r(\alpha_2(u))[x,y]_1-[\rho_2^r(u)x,\alpha_1(y)]_1+[\alpha_1(x),\rho_2^l(u)y]_1-\rho_2^l(\rho_1^l(x)u)\alpha_1(y)\nonumber\\
  &&+\rho_2^r(\rho_1^r(y)u)\alpha_1(x)=0\label{mpHL6}
 \end{eqnarray}
Then $((A_1,\rho_2^l,\rho_2^r, \alpha_1),(A_2,\rho_1^l,\rho_1^r, \alpha_2))$ is called a matched pair of Hom-Leibniz algebras.
\end{definition}
\begin{remark}
 \begin{enumerate}
  Observe that (\ref{mpHL1}) and (\ref{mpHL3}) give rise to
  \begin{eqnarray}
   &&[\alpha_2(u),\rho_1^l(x)v]_2+\rho_1^r(\rho_2^l(v)x)\alpha_2(u)+
  [\alpha_2(u),\rho_1^r(x)v]_2+\rho_1^r(\rho_2^r(v)x)\alpha_2(u)=0\nonumber
  \end{eqnarray}
  and (\ref{mpHL4}) and (\ref{mpHL6}) give rise to
  \begin{eqnarray}
  &&[\alpha_1(x),\rho_2^l(u)y]_1+\rho_2^r(\rho_1^l(y)u)\alpha_1(x)
  +[\alpha_1(x),\rho_2^r(u)y]_1+\rho_2^r(\rho_1^r(y)u)\alpha_1(x)=0.\nonumber
  \end{eqnarray}
 \end{enumerate}

\end{remark}

\begin{proposition}
 Let $((A_1,\rho_2^l,\rho_2^r, \alpha_1),(A_2,\rho_1^l,\rho_1^r, \alpha_2))$ be a matched pair of Hom-Leibniz algebras. Then, there is a Hom-Leibniz algebra $A_{\Join}:=(A_1\oplus A_2,
\{,\},\alpha_1\oplus\alpha_2)$ defined by
\begin{eqnarray}
 \{(x+u),(y+v)\}&:=&([x,y]_1+\rho_2^l(u)y+\rho_2^r(v)x)+([u,v]+\rho_1^l(x)v+\rho_1^r(y)u) \label{opHLeib1}\\
 (\alpha_1\oplus\alpha_2)(x+u)&:=&\alpha_1(x)+\alpha_2(u)\nonumber
\end{eqnarray}
\end{proposition}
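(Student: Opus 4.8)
The plan is to verify directly that the structure $A_{\Join}:=(A_1\oplus A_2,\{,\},\alpha_1\oplus\alpha_2)$ defined by (\ref{opHLeib1}) is a Hom-Leibniz algebra, i.e.\ that $\alpha_1\oplus\alpha_2$ is multiplicative with respect to $\{,\}$ and that the Hom-Leibniz identity (\ref{leib}) holds. The six matched-pair compatibility conditions (\ref{mpHL1})--(\ref{mpHL6}) are precisely what we will extract, and the strategy mirrors the proof of Proposition \ref{spHLa}: expand $\{\{a,b\},(\alpha_1\oplus\alpha_2)(c)\}-\{(\alpha_1\oplus\alpha_2)(a),\{b,c\}\}-\{\{a,c\},(\alpha_1\oplus\alpha_2)(b)\}$ for general elements $a=x+u$, $b=y+v$, $c=z+w$ with $x,y,z\in A_1$ and $u,v,w\in A_2$, then separate the result into its $A_1$-component and its $A_2$-component and show each vanishes.

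First I would dispatch the multiplicativity: since $\alpha_1,\alpha_2$ are multiplicative for $[,]_1,[,]_2$ and since each $\rho_i^l,\rho_i^r$ satisfies (\ref{rHL1}) as part of being a representation (so that $\alpha_2\rho_2^l(u)=\rho_2^l(\alpha_2(u))\alpha_1$ etc.), the identity $(\alpha_1\oplus\alpha_2)\{a,b\}=\{(\alpha_1\oplus\alpha_2)(a),(\alpha_1\oplus\alpha_2)(b)\}$ follows componentwise with no further input. Next I would carry out the main expansion. The $A_1$-valued part of the Hom-Leibniz expression collects all terms landing in $A_1$: the genuine $A_1$-bracket terms $[[x,y]_1,\alpha_1(z)]_1-[\alpha_1(x),[y,z]_1]_1-[[x,z]_1,\alpha_1(y)]_1$, which cancel by (\ref{leib}) in $\mathcal{A}_1$; together with the mixed terms involving $\rho_2^l,\rho_2^r$ acting into $A_1$ and $[,]_1$-brackets with one $\rho_2$-argument. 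After grouping by which of $u,v,w$ carries the representation, these mixed terms assemble exactly into the left-hand sides of (\ref{mpHL4}), (\ref{mpHL5}), (\ref{mpHL6}) (applied to the appropriate triples), hence vanish. By the symmetric role of the two factors, the $A_2$-valued part collects the $A_2$-bracket terms (cancelled by (\ref{leib}) in $\mathcal{A}_2$) together with mixed terms that assemble into (\ref{mpHL1}), (\ref{mpHL2}), (\ref{mpHL3}) and likewise vanish.

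The main obstacle will be the bookkeeping in the expansion, not any conceptual difficulty: each of the three brackets $\{\{a,b\},(\alpha_1\oplus\alpha_2)(c)\}$ and its two cyclic-type partners produces several terms because $\{,\}$ itself has four summands in each slot, so one must carefully track the nine resulting cross-terms in each component and confirm that, after using the representation axioms (\ref{rHL1})--(\ref{rHL4}) in each $\mathcal{A}_i$ to normalize the $\alpha_i$'s past the $\rho$'s, the leftover combination is term-for-term one of the matched-pair relations. The delicate point is the correct pairing of a term such as $\rho_2^r(\rho_1^l(y)u)\alpha_1(z)$ arising from applying $\{,\}$ to an element whose $A_2$-part is itself a $\rho_1$-image; such ``nested representation'' terms are exactly those that appear in (\ref{mpHL4})--(\ref{mpHL6}), and matching their signs and arguments is where care is needed. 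Since every piece has a designated home among the six relations and the pure-bracket pieces cancel by the individual Hom-Leibniz identities, the verification closes, establishing that $A_{\Join}$ is a Hom-Leibniz algebra.
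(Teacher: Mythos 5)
Your proposal is correct and follows essentially the same route as the paper: multiplicativity from (\ref{rHL1}) together with that of $\alpha_1,\alpha_2$, then a full expansion of $\{\{a,b\},(\alpha_1\oplus\alpha_2)(c)\}-\{(\alpha_1\oplus\alpha_2)(a),\{b,c\}\}-\{\{a,c\},(\alpha_1\oplus\alpha_2)(b)\}$, with the pure-bracket terms cancelled by (\ref{leib}) in each factor, the terms involving only one of the two representations cancelled by (\ref{rHL2})--(\ref{rHL4}), and the remaining cross-terms grouped into the six matched-pair identities (\ref{mpHL1})--(\ref{mpHL6}). The only slight imprecision is attributing every leftover piece to the matched-pair relations; the nested triples such as $\rho_2^l([u,v]_2)\alpha_1(z)-\rho_2^l(\alpha_2(u))\rho_2^l(v)z-\rho_2^r(\alpha_2(v))\rho_2^l(u)z$ are killed by the representation axioms themselves, exactly as in the paper's grouping $(i_1)$--$(i_4)$, not by (\ref{mpHL1})--(\ref{mpHL6}).
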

\begin{proof}
 The multiplicativity of $A_{\Join}$ follows from the one of $\alpha_1$ with respect to $[,]_1,$ $\alpha_2$ with respect to $[,]_2$ and the condition (\ref{rHL1}). Next, pick 
 $(x,y,z)\in A_1^{\times 3}$ and 
 $(u,v,w)\in A_2^{\times 3}.$ Then by direct computations
 \begin{eqnarray}
 && \{\{x+u,y+v\},(\alpha_1\oplus\alpha_2)(z+w\}\nonumber\\
 &&=\underbrace{[[x,y]_1,\alpha_1(z)]_1}_{(i_1)}+[\rho_2^l(u)y,\alpha_1(z)]_1+[\rho_2^r(v)x,\alpha_1(z)]_1\nonumber\\
 &&+\underbrace{\rho_2^l([u,v]_2)\alpha_1(z)}_{(i_2)}+\rho_2^l(\rho_1^l(x)v)\alpha_1(z)
 +\rho_2^l(\rho_1^r(y)u)\alpha_1(z)\nonumber\\
 &&+\rho_2^r(\alpha_2(w)[x,y]_1+\underbrace{\rho_2^r(\alpha_2(w))\rho_2^l(u)y}_{(i_3)}
 +\underbrace{\rho_2^r(\alpha_2(w))\rho_2^l(v)x}_{(i_4)}\nonumber\\
 &&+\underbrace{[[u,v]_2,\alpha_2(w)]_2}_{(i_1)}+[\rho_1^l(x)v,\alpha_2(w)]_2+[\rho_1^r(y)u,\alpha_2(w)]_2\nonumber\\
 &&+\underbrace{\rho_1^l([x,y]_1)\alpha_2(w)}_{(i_2)}+\rho_1^l(\rho_2^l(u)y)\alpha_2(w)
 +\rho_1^l(\rho_2^r(v)x)\alpha_2(w)\nonumber\\
 &&+\rho_1^r(\alpha_1(z)[u,v]_2+\underbrace{\rho_1^r(\alpha_1(z))\rho_1^l(x)v}_{(i_3)}
 +\underbrace{\rho_1^r(\alpha_1(z))\rho_1^l(y)u}_{(i_4)},\nonumber
 \end{eqnarray}
 \begin{eqnarray}
 &&\{(\alpha_1\oplus\alpha_2)(x+u),\{y+v,z+w\}\}\nonumber\\
 &&=[\alpha_1(x),[y,z]_1]_1+[\alpha_1(x),\rho_2^l(v)z]_1+[\alpha_1(x),\rho_2^r(w)y]_1\nonumber\\
 &&+\rho_2^l(\alpha_2(u))[y,z]_1+\rho_2^l(\alpha_2(u))\rho_2^l(v)z+
 \rho_2^l(\alpha_2(u))\rho_2^r(w)y\nonumber\\
 &&+\rho_2^r([v,w]_2)\alpha_1(x)+\rho_2^r(\rho_1^l(y)w)\alpha_1(x)
 +\rho_2^r(\rho_1^r(z)v)\alpha_1(x)\nonumber\\
 &&+[\alpha_2(u),[v,w]_2]_2+[\alpha_2(u),\rho_1^l(y)w]_2+[\alpha_2(u),\rho_1^r(z)v]_2\nonumber\\
 &&+\rho_1^l(\alpha_1(x))[v,w]_2+\rho_1^l(\alpha_1(x))\rho_1^l(y)w+
 \rho_1^l(\alpha_1(x))\rho_1^r(z)v\nonumber\\
 &&+\rho_1^r([y,z]_1)\alpha_2(u)+\rho_1^r(\rho_2^l(v)z)\alpha_2(u)
 +\rho_1^r(\rho_2^r(w)y)\alpha_2(u),\nonumber\\
 &&\{\{x+u,y+v\},(\alpha_1\oplus\alpha_2)(z+w)\}\nonumber\\
 &&=[[x,z]_1,\alpha_1(y)]_1+[\rho_2^l(u)z,\alpha_1(y)]_1+[\rho_2^r(w)x,\alpha_1(y)]_1\nonumber\\
 &&+\rho_2^l([u,w]_2)\alpha_1(y)+\rho_2^l(\rho_1^l(x)w)\alpha_1(y)+\rho_2^l(\rho_1^r(z)u)\alpha_1(y)\nonumber\\
 &&+\rho_2^r(\alpha_2(v))[y,z]_1+\rho_2^r(\alpha_2(v))\rho_2^l(u)z
 +\rho_2^r(\alpha_2(v))\rho_2^r(w)x\nonumber\\
 &&+[[u,w]_2,\alpha_2(v)]_2+[\rho_1^l(x)w,\alpha_2(v)]_2+[\rho_1^r(z)u,\alpha_2(v)]_2\nonumber\\
 &&+\rho_1^l([x,z]_1)\alpha_2(v)+\rho_1^l(\rho_2^l(u)z)\alpha_2(v)+\rho_1^l(\rho_2^r(w)x)\alpha_2(v)\nonumber\\
 &&+\rho_1^r(\alpha_1(y))[v,w]_2+\rho_1^r(\alpha_1(y))\rho_1^l(x)w
 +\rho_1^r(\alpha_1(y))\rho_1^r(z)u.\nonumber
 \end{eqnarray}
Hence, by $(\ref{leib}), (\ref{rHL2}),(\ref{rHL3}), (\ref{rHL4})$ for $(i_1), (i_2), (i_3), (i_4)$ respectively, we get:
\begin{eqnarray}
 &&\{\{x+u,y+v\},(\alpha_1\oplus\alpha_2)(z+w\}-\{(\alpha_1\oplus\alpha_2)(x+u),\{y+v,z+w\}\}\nonumber\\
 &&-\{\{x+u,y+v\},(\alpha_1\oplus\alpha_2)(z+w)\}\nonumber\\
 &&=\Big([\rho_2^l(u)y,\alpha_1(z)]_1+\rho_2^l(\rho_1^r(y)u)\alpha_1(z)-\rho_2^l(\alpha_2(u))[y,z]_1-\rho_2^l(\rho_1^r(z)u)\alpha_1(y)\nonumber\\
 &&-[\rho_2^l(u)z,\alpha_1(y)]_1\Big)+
 \Big([\rho_2^r(v)x,\alpha_1(z)]_1+\rho_2^l(\rho_1^l(x)v)\alpha_1(z)-[\alpha_1(x),\rho_2^l(v)z]_1\nonumber\\ -&&\rho_2^r(\rho_1^r(z)v)\alpha_1(x)-\rho_2^r(\alpha_2(v))[x,z]_1\Big)+\Big( \rho_2^r(\alpha_2(w))[x,y]_1-[\alpha_1(x),\rho_2^r(w)y]_1\nonumber\\
 &&-\rho_2^r(\rho_1^l(y)w)\alpha_1(x)-
 [\rho_2^r(w)x,\alpha_1(y)]_1-\rho_2^l(\rho_1^l(x)w)\alpha_1(y)\Big)+
 \Big([\rho_1^l(x)v,\alpha_2(w)]_2\nonumber\\
 &&+\rho_1^l(\rho_2^r(v)x)\alpha_2(w)-\rho_1^l(\alpha_1(x))[v,w]_2-[\rho_1^l(x)w,\alpha_2(v)]_2-\rho_1^l(\rho_2^r(w)x)\alpha_2(v)\Big)\nonumber\\
 &&+\Big([\rho_1^r(y)u,\alpha_2(w)]_2+\rho_1^l(\rho_2^l(u)y)\alpha_2(w)-[\alpha_2(u),\rho_1^l(y)w]_2-\rho_1^r(\rho_2^r(w)y)\alpha_2(u)\nonumber\\
 &&-\rho_1^r(\alpha_1(y))[u,w]_2\Big)+\Big( \rho_1^r(\alpha_1(z)[u,v]_2-[\alpha_2(u),\rho_1^r(z)v]_2-\rho_1^r(\rho_2^l(v)z)\alpha_2(u)\nonumber\\
 &&-[\rho_1^r(z)u,\alpha_2(v)]_2-\rho_1^l(\rho_2^l(u)z)\alpha_2(v)\Big)=0
 \mbox{ ( by (\ref{mpHL1})-(\ref{mpHL6}) ).} \nonumber
\end{eqnarray}
\end{proof}
\begin{definition}
 Let $(V,\rho^l,\rho^r,\phi )$ be a representation of a Hom-Leibniz algebra $(A,[,],\alpha).$ A linear
operator $T : V\rightarrow A$ is called a relative Rota-Baxter operator on $(A,[,],\alpha)$ with respect to $(V,\rho^l,\rho^r,\phi )$ if $T$ satisfies
\begin{eqnarray}
 && T\phi=\alpha T \label{rbHL1}\\
  && [Tu,Tv]=T(\rho^l(Tu)+\rho^r(Tv)u) \mbox{ for all $u,v\in V.$} \label{rbHL2}
\end{eqnarray}
\end{definition}
Observe  that Rota-Baxter operators on Hom-Leibniz algebras are relative Rota-Baxter operators with
respect to the regular representation.
\begin{example}\label{exHLr}
 Consider the $2$-dimensional  Hom-Leibniz algebra 
$(A,[,],\alpha)$  where the non-zero products with respect to a basis $(e_1,e_2)$ are given by: $[e_1,e_2]=-[e_2,e_1]:=e_1$ and $\alpha(e_1):=-e_1,\ \alpha(e_2):=e_1+e_2.$\\
Then a linear map $T: A\rightarrow A$ defined by  $T(e_1):=a_{11}e_1+a_{21}e_2;\ 
T(e_2):=a_{12}e_1+a_{22}e_2$ is a relative Rota-Baxter on $(A,\cdot,\alpha)$ with respect to the regular representation  if and only if $T\alpha=\alpha T$ and
\begin{eqnarray}
[Te_i,Te_j]=T([Te_i, e_j]+[e_i,Te_j]) 
\mbox{ for all $i,j\in \{1,2\}.$}\label{exHLreq}
\end{eqnarray}
 The condition $\alpha T=T\alpha$ is equivalent to 
\begin{eqnarray}
 a_{21}=0;\  a_{11}+2a_{12}-a_{22}=0.\nonumber
\end{eqnarray}
For $i=j=1$ and  $i=j=2,$ the condition (\ref{exHLreq}) is satisfied trivially.
Similarly, we obtain for  $(i,j)\in\{(1,2),(2,1)\}$ 
\begin{eqnarray}
 a_{11}a_{22}=a_{11}a_{22}+a_{11}^2.\nonumber 
 \end{eqnarray} 
Summarize the above discussions, we observe
that $a_{11}=a_{21}=0, a_{22}=2a_{21}.$ Hence, the linear map $T: A\rightarrow A$ defined by $T(e_2):= a_{12}e_1+2a_{12}e_2$ is a  relative Rota-Baxter operator on $(A,[,],\alpha)$  with respect to the regular representation. 
\end{example}
\begin{lemma}\label{lHLRB}
 Let $T$ be a relative Rota-Baxter operator on a Hom-Leibniz algebra $(A,[,],\alpha)$ with respect to a representation  $(V,\rho^l,\rho^r,\phi ).$ If define a bracket $[,]_T$ on $V$ by 
 \begin{eqnarray}
  [u,v]_T:=\rho^l(Tu)v+\rho^r(Tv)u \mbox{ for all $(u, v)\in V^{\times 2}$} \label{opRB}
 \end{eqnarray}
then, $(V,[,]_T, \phi)$ is a Hom-Leibniz algebra.
\end{lemma}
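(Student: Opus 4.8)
The plan is to mirror the strategy of Lemma~\ref{lHAsRB}: first isolate the fundamental identity saying that $T$ intertwines the two brackets, then deduce multiplicativity, and finally verify the Hom-Leibniz identity by sorting the resulting terms against the three representation axioms (\ref{rHL2})--(\ref{rHL4}).

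First I would record that combining the definition (\ref{opRB}) with the defining relation (\ref{rbHL2}) yields
$$T([u,v]_T)=[Tu,Tv]\quad\text{for all }u,v\in V.$$
This is the single most useful consequence: it expresses that $T$ is a bracket morphism, and it is exactly what converts an expression $\rho^{l}(T([a,b]_T))$ or $\rho^{r}(T([a,b]_T))$ into $\rho^{l}([Ta,Tb])$ or $\rho^{r}([Ta,Tb])$, which are the inputs appearing on the left-hand sides of (\ref{rHL2})--(\ref{rHL4}). For multiplicativity I would expand $\phi([u,v]_T)$ via (\ref{opRB}), push $\phi$ through both operators using (\ref{rHL1}) to obtain $\rho^{l}(\alpha(Tu))\phi(v)+\rho^{r}(\alpha(Tv))\phi(u)$, and then apply (\ref{rbHL1}) in the form $\alpha T=T\phi$ to rewrite $\alpha(Tu)=T\phi(u)$ and $\alpha(Tv)=T\phi(v)$; the result is precisely $[\phi(u),\phi(v)]_T$.

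The heart of the argument is the Hom-Leibniz identity. I would expand each of the three brackets $[[u,v]_T,\phi(w)]_T$, $[\phi(u),[v,w]_T]_T$ and $[[u,w]_T,\phi(v)]_T$ using (\ref{opRB}), the morphism identity $T([a,b]_T)=[Ta,Tb]$, and $\alpha T=T\phi$. Each bracket produces three summands, nine in all, and I would then group them according to which of $u$, $v$, $w$ occupies the acted-upon slot. The three terms acting on $w$ should collapse to (\ref{rHL2}) with $x=Tu$, $y=Tv$; the three terms acting on $v$ should collapse to (\ref{rHL3}) with $x=Tu$, $y=Tw$; and the three terms acting on $u$ should collapse to (\ref{rHL4}) with $x=Tv$, $y=Tw$. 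Summing the three matched groups gives the Hom-Leibniz identity for $[,]_T$.

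The main obstacle is purely organizational: keeping the nine terms straight and assigning each to its correct group. There is no conceptual difficulty once the morphism identity $T([u,v]_T)=[Tu,Tv]$ is available, since the whole verification then reduces to recognizing that each of the three representation axioms (\ref{rHL2}), (\ref{rHL3}), (\ref{rHL4}) governs exactly one of the three acted-upon slots.
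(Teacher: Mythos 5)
Your proposal is correct and follows essentially the same route as the paper: establish $T([u,v]_T)=[Tu,Tv]$ from (\ref{opRB}) and (\ref{rbHL2}), get multiplicativity from (\ref{rHL1}) and (\ref{rbHL1}), then expand the three brackets and match the resulting terms against (\ref{rHL2})--(\ref{rHL4}) after rewriting $T\phi$ as $\alpha T$. Your slot-by-slot bookkeeping (with the substitutions $x=Tu,y=Tv$ for (\ref{rHL2}), $x=Tu,y=Tw$ for (\ref{rHL3}), and $x=Tv,y=Tw$ for (\ref{rHL4})) checks out and is just a more explicit organization of the same verification.
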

\begin{proof}
 First, note that the multiplicativity of 
 $[,]_T$ with respect to $\phi$ follows from conditions (\ref{rHL1}) and (\ref{rbHL1}). Next, pick $u,v,w\in V$ and observe from (\ref{opRB}) and (\ref{rbHL2}) that $T[u,v]_T=[Tu,Tv].$ Therefore, we deduce:
 \begin{eqnarray}
  && [[u,v]_T,\phi(w)]_T=\rho^l([Tu,Tv])\phi(w)+\rho^l(T\phi(w))[u,v]_T,\nonumber\\                               
  &&=\rho^l([Tu,Tv])\phi(w)+
  \rho^r(T\phi(w))\rho^l(Tu)v+\rho^r(T\phi(w)\rho^r(Tv)u.\nonumber
 \end{eqnarray}
Similarly, we compute
\begin{eqnarray}
 && [\phi(u),[v,w]_T]_T=\rho^r([Tv,Tw])\phi(u)+\rho^l(T\phi(u))\rho^l(Tv)w+
 \rho^l(T\phi(u))\rho^l(Tw)v,\nonumber\\
 && [[u,w]_T,\phi(v)]_T=
 \rho^l([Tu,Tw])\phi(v)+
  \rho^r(T\phi(v))\rho^l(Tu)w+\rho^r(T\phi(v)\rho^r(Tw)u;\nonumber
\end{eqnarray}
Hence, the Hom-Leibniz identity in $(V,[,]_T,\phi)$ follows by (\ref{rbHL1}) and $(\ref{rHL2})-(\ref{rHL4}).$
\end{proof}
\begin{corollary}\label{cmorp}
 Let $T$ be a relative Rota-Baxter operator on a Hom-Leibniz algebra $(A,[,],\alpha)$ with respect to a representation  $(V,\rho^l,\rho^r,\phi ).$ Then, $T$ is a morphism from the Hom-Leibniz algebra 
 $(V,[,]_T,\phi)$ to the initial Hom-Leibniz algebra $(A,[,],\alpha).$
\end{corollary}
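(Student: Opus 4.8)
The statement to prove is Corollary~\ref{cmorp}: that $T$ is a morphism of Hom-Leibniz algebras from $(V,[,]_T,\phi)$ to $(A,[,],\alpha)$.

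The plan is to unwind the definition of a Hom-Leibniz algebra morphism and verify its two defining conditions directly from the hypotheses already recorded in Lemma~\ref{lHLRB} and the definition of a relative Rota-Baxter operator. Recall that a morphism $f\colon(A,\mu_A,\alpha_A)\to(B,\mu_B,\alpha_B)$ of Hom-algebras must satisfy both $f\circ\alpha_A=\alpha_B\circ f$ and $f\circ\mu_A=\mu_B\circ f^{\otimes 2}$. Here the source multiplication is $[,]_T$ on $V$ and the target multiplication is $[,]$ on $A$, with structure maps $\phi$ and $\alpha$ respectively, so I must check $T\circ\phi=\alpha\circ T$ and $T\bigl([u,v]_T\bigr)=[Tu,Tv]$ for all $u,v\in V$.

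First I would observe that the compatibility with the structure maps, $T\phi=\alpha T$, is nothing but condition (\ref{rbHL1}) in the definition of a relative Rota-Baxter operator, so this half is immediate. Second, for the bracket-compatibility I would invoke the key identity already extracted inside the proof of Lemma~\ref{lHLRB}, namely that $T[u,v]_T=[Tu,Tv]$: expanding the source bracket via its definition (\ref{opRB}) gives $T\bigl([u,v]_T\bigr)=T\bigl(\rho^l(Tu)v+\rho^r(Tv)u\bigr)$, and applying the defining relation (\ref{rbHL2}) of the relative Rota-Baxter operator turns the right-hand side into $[Tu,Tv]$. This is exactly the morphism condition on brackets, so both requirements hold and $T$ is a morphism.

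There is no genuine obstacle here; the corollary is an essentially cosmetic restatement of facts established en route to Lemma~\ref{lHLRB}, and the entire argument is a two-line verification. The only point deserving a moment of care is purely notational: one must match the source and target structure maps to the correct operations ($\phi$ with $[,]_T$ on $V$, and $\alpha$ with $[,]$ on $A$) so that the two morphism axioms are stated against the right pairs, but once this bookkeeping is in place the result follows directly from (\ref{rbHL1}) and (\ref{rbHL2}).
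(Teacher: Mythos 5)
Your proof is correct and matches the paper's (implicit) argument: the paper states this corollary without a written proof precisely because the two morphism conditions are $T\phi=\alpha T$, which is axiom (\ref{rbHL1}), and $T[u,v]_T=[Tu,Tv]$, which is already observed inside the proof of Lemma \ref{lHLRB} from (\ref{opRB}) and (\ref{rbHL2}). Nothing is missing.
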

\begin{theorem}\label{tHLRB}
 Let $T$ be a relative Rota-Baxter operator on a Hom-Leibniz algebra $(A,[,],\alpha)$ with respect to a representation  $(V,\rho^l,\rho^r,\phi ).$  
 Then, $(A,\overline{\rho^l}, \overline{\rho^r},\alpha)$ is a representation of the Hom-Leibniz algebra
 $(V,[,]_T,\phi)$ where
 \begin{eqnarray}
 && \overline{\rho^l}(u)x:=[Tu,x]-T\rho^r(x)u   \mbox{ for all $(x,u)\in A\times V$}\label{rrb1}\\
 && \overline{\rho^r}(u)x:=[x,Tu]-T\rho^l(x)u  \mbox{ for all $(x,u)\in A\times V$}\label{rrb2}
 \end{eqnarray}
 \end{theorem}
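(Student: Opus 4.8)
The plan is to verify directly the four representation axioms (\ref{rHL1})--(\ref{rHL4}) for the candidate quadruple $(A,\overline{\rho^l},\overline{\rho^r},\alpha)$, now read as a representation of the Hom-Leibniz algebra $(V,[,]_T,\phi)$; that is, with the roles of the two linear maps interchanged, so that $\phi$ plays the part of the algebra endomorphism and $\alpha$ that of the module endomorphism, while $[,]_T$ plays the part of the bracket. Throughout I would exploit two identities repeatedly: the relation $T[u,v]_T=[Tu,Tv]$ supplied by Corollary \ref{cmorp} together with the defining bracket (\ref{opRB}), and the Rota-Baxter relation (\ref{rbHL2}), which lets me pull $T$ outside any bracket of the form $[T(-),T(-)]$.

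First I would dispatch (\ref{rHL1}). Applying $\alpha$ to $\overline{\rho^l}(u)x=[Tu,x]-T\rho^r(x)u$ and using the multiplicativity of $\alpha$ with respect to $[,]$, the intertwining $\alpha T=T\phi$ of (\ref{rbHL1}), and the relation $\phi\rho^r(x)=\rho^r(\alpha(x))\phi$ from (\ref{rHL1}) in $\mathcal{V}$, one gets $\alpha\overline{\rho^l}(u)x=[T\phi(u),\alpha(x)]-T\rho^r(\alpha(x))\phi(u)=\overline{\rho^l}(\phi(u))\alpha(x)$; the computation for $\overline{\rho^r}$ is identical.

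The substantive work is in (\ref{rHL2})--(\ref{rHL4}). For each I would expand the left-hand side $\overline{\rho^l}([u,v]_T)\alpha(x)$ (resp. its analogue), first replacing $T[u,v]_T$ by $[Tu,Tv]$ and then breaking the resulting triple bracket $[[Tu,Tv],\alpha(x)]$ by the Hom-Leibniz identity (\ref{leib}). On the right-hand side I would expand each composite such as $\overline{\rho^l}(\phi(u))\overline{\rho^l}(v)x$; here a bracket of the shape $[T(-),T(-)]$ appears and is rewritten by (\ref{rbHL2}), after which the doubly-nested $T$-terms cancel within each expansion. The genuine bracket contributions on the two sides then coincide, and matching the remaining $T(\dots)$ terms reduces, for (\ref{rHL2}), to reading (\ref{rHL3}) with its two algebra arguments set to $Tu$ and $x$ (applied to $v$) for the cluster carrying $v$, and reading (\ref{rHL4}) with its arguments set to $Tv$ and $x$ (applied to $u$) for the cluster carrying $u$; the identities (\ref{rHL3}) and (\ref{rHL4}) for the new representation are handled by the strictly analogous combinatorics.

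I expect the main obstacle to be organizational rather than conceptual: keeping track of the many $T(\dots)$ terms produced by (\ref{rbHL2}), ensuring the correct cancellations, and above all recognizing which instance of the original axioms (\ref{rHL2})--(\ref{rHL4}), under which substitution of algebra arguments, annihilates each surviving cluster. As in the Hom-associative prototype Theorem \ref{tHasRB}, once the bracket terms are matched via (\ref{leib}) the problem collapses to these substitutions, so no structural input beyond the representation axioms of $\mathcal{V}$ and the two Rota-Baxter relations is required.
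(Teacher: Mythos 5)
Your plan is correct and follows essentially the same route as the paper's proof: verify (\ref{rHL1}) from multiplicativity and (\ref{rbHL1}), then for each of (\ref{rHL2})--(\ref{rHL4}) expand $\overline{\rho^l}([u,v]_T)\alpha(x)$ (resp.\ its analogue) via $T[u,v]_T=[Tu,Tv]$ and the Hom-Leibniz identity, expand the composite terms, cancel the $[T(-),T(-)]$ clusters with (\ref{rbHL2}), and absorb the survivors by the axioms of $\mathcal{V}$ with arguments $Tu$, $Tv$, $x$ --- exactly the substitutions you describe (the paper additionally invokes the derived identity (\ref{rHL4r}) in the (\ref{rHL3}) step, which is just a consequence of (\ref{rHL4})). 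No substantive difference.
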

 \begin{proof}
  First, observe that (\ref{rHL1}) in $(A,\overline{\rho^l}, \overline{\rho^r},\alpha)$ follows from the one in $(V,\rho^l,\rho^r,\phi ),$  the multiplicativity of $[,]$ with respect to $\alpha$ and (\ref{rbHL1}). Next, for all 
  $x\in A$ and $(u,v)\in V^{\times 2},$ if one oberves from Corollary \ref{cmorp} that $T[u,v]_T=[Tu,Tv],$ we compute:
  \begin{eqnarray}
   &&\overline{\rho^l}([u,v]_T)\alpha(x)=
   [[Tu,Tv],\alpha(x)]-T\rho^r(\alpha(x))\rho^l(Tu)v-T\rho^r(\alpha(x))\rho^r(Tv)u \mbox{ ( by (\ref{rrb1}) )}\nonumber\\
   &&=[[Tu,Tv],\alpha(x)]-T\rho^l(T\phi(u))\rho^r(x)v-T\rho^l([Tu,x])\phi(v)
   -T\rho^r([Tv,x])\phi(u) \nonumber\\
   &&- T\rho^r(T\phi(v))\rho^r(x)u 
   \mbox{ ( by (\ref{rHL3}) and (\ref{rHL4}) in $(V,\rho^l,\rho^r,\phi )$ ). }\nonumber
  \end{eqnarray}
Also, we obtain by (\ref{rrb1}) and (\ref{rrb2}):
\begin{eqnarray}
 &&\overline{\rho^l}(\phi(u))\overline{\rho^l}(v)x=[T\phi(u),[Tv,x]]-[T\phi(u),T\rho^r(x)v]-T\rho^r([Tv,x])\phi(u)+T\rho^r(T\rho^r(x)v)\phi(u),\nonumber\\
 &&\overline{\rho^r}(\phi(v))\overline{\rho^l}(u)x=[[Tu,x],T\phi (v)]-[T\rho^r(x)u, T\phi(v)]-T\rho^l([Tu,x])\phi(v)+T\rho^l(T\rho^r(x)u)\phi(v).\nonumber 
\end{eqnarray}
Using $T\phi=\alpha T$ and
(\ref{leib}), we obtain by (\ref{rbHL2}):
\begin{eqnarray}
&& \overline{\rho^l}([u,v]_T)\alpha(x)-\overline{\rho^l}(\phi(u))\overline{\rho^l}(v)x-\overline{\rho^r}(\phi(v))\overline{\rho^l}(u)x=
\Big([T\phi(u),T\rho^r(x)v]-T\rho^l(T\phi(u))\rho^r(x)v\nonumber\\
&&-T\rho^r(T\rho^r(x)v)\phi(u)\Big)+
\Big([T\rho^r(x)u,T\phi(v)]-T\rho^l(T\rho^r(x)u)\phi(v)-T\rho^r(T\phi(v))\rho^r(x)u\Big)=0.\nonumber
\end{eqnarray}
Hence, (\ref{rHL2}) holds in $(A,\overline{\rho^l}, \overline{\rho^r},\alpha).$ Now, to prove (\ref{rHL3}) in 
$(A,\overline{\rho^l}, \overline{\rho^r},\alpha),$ by straightforward computations using (\ref{rrb1}) and (\ref{rrb2}) we obtain:
\begin{eqnarray}
 &&\overline{\rho^r}(\phi(v))\overline{\rho^l}(u)x=[[Tu,x],T\phi(v)]-[T\rho^r(x)u,T\phi(v)]-T\rho^l([Tu,x])\phi(v)+T\rho^l(T\rho^r(x)u)\phi(v)\nonumber\\
 &&=[T\phi(u),[x,Tv]]+[[Tu,Tv],\alpha(x)]-[T\rho^r(x)u,T\phi(v)]-T\rho^l(T\phi(u))\rho^l(x)v-T\rho^r(\alpha(x)\rho^l(Tu)v\nonumber\\
 &&+ T\rho^l(T\rho^r(x)u)\phi(v) \mbox{ ( by (\ref{rbHL1}), (\ref{leib}) and (\ref{rHL2}) ).}\nonumber
\end{eqnarray}
Similarly, we get:
\begin{eqnarray}
 &&\overline{\rho^l}(\phi(u))\overline{\rho^r}(v)x=[T\phi(u),[x,Tv]]-[T\phi(u),T\rho^l(x)v]-T\rho^r([x,Tv])\phi(u)+T\rho^r(T\rho^l(x)v)\phi(u),\nonumber\\
 &&\overline{\rho^l}([u,v]_T)\alpha(x)=
   [[Tu,Tv],\alpha(x)]-T\rho^r(\alpha(x))\rho^l(Tu)v-T\rho^r(\alpha(x))\rho^r(Tv)u \nonumber\\
   &&=[[Tu,Tv],\alpha(x)]-T\rho^r(\alpha(x))\rho^l(Tu)v
   -T\rho^r([Tv,x])\phi(u) 
- T\rho^r(T\phi(v))\rho^r(x)u 
   \mbox{ ( by (\ref{rHL4})  ). }\nonumber
\end{eqnarray}
Hence, after rearranging terms we come to
\begin{eqnarray}
 &&\overline{\rho^r}(\phi(v))\overline{\rho^l}(u)x-\overline{\rho^l}(\phi(u))\overline{\rho^r}(v)x-\overline{\rho^l}([u,v]_T)\alpha(x)
 =\Big(-[T\rho^r(x)u,T\phi(v)]+T\rho^l(T\rho^r(x)u)\phi(v)\nonumber\\
 &&+T\rho^r(T\phi(v))\rho^r(x)u \Big) 
 +\Big([T\phi(u),T\rho^l(x)v]-T\rho^l(T\phi(u))\rho^l(x)v-T\rho^r(T\rho^l(x)v)\phi(u) \Big)\nonumber\\
 &&+ 
 \Big(T\rho^r([x,Tv])\phi(v)+T\rho^r([Tv,x])\phi(v)\Big)=0  \mbox{ (  by (\ref{rbHL2}) and (\ref{rHL4r}) ).}\nonumber
\end{eqnarray}
Finally, to prove (\ref{rHL4}), we first compute using (\ref{rrb2}):
\begin{eqnarray}
 &&\overline{\rho^r}(\phi(v))\overline{\rho^r}(u)x=[[x,Tu],T\phi(v)]-
 [T\rho^l(x)u,T\phi(v)]-T\rho^l([x,Tu])\phi(v)+T\rho^l(T\rho^l(x)u)\phi(v)\nonumber\\
 &&=[\alpha(x),[Tu,Tv]]+[[x,Tv],T\phi(u)]-
 [T\rho^l(x)u,T\phi(v)]-T\rho^l(\alpha(x))\rho^l(Tu)v-T\rho^l(T\phi(u))\rho^l(x)v\nonumber\\
 && +T\rho^l(T\rho^l(x)u)\phi(v) \mbox{ ( by (\ref{rbHL1}), (\ref{leib}) and (\ref{rHL2}) ),}\nonumber\\
 &&\overline{\rho^r}([u,v]_T)\alpha(x)=[\alpha(x),T[u,v]_T]-T\rho^l(\alpha(x))\rho^l(Tu)v-T\rho^l(\alpha(x))\rho^r(Tv)u\nonumber\\
 &&=[\alpha(x),[Tu,Tv]]-T\rho^l(\alpha(x))\rho^l(Tu)v-T\rho^r(T\phi(u))\rho^l(x)u+T\rho^l([x,Tv])\phi(u) 
 \nonumber\\
 &&\mbox{ ( by (\ref{rbHL1}), (\ref{leib}) and (\ref{rHL3}) ),}\nonumber\\
 &&\overline{\rho^r}(\phi(u))\overline{\rho^r}(v)x=[[x,Tv],T\phi(u)]-
 [T\rho^l(x)v,T\phi(u)]-T\rho^l([x,Tv])\phi(u)+T\rho^l(T\rho^l(x)v)\phi(u).\nonumber
\end{eqnarray}
Therefore using these equations, we come to
\begin{eqnarray}
 &&\overline{\rho^r}(\phi(v))\overline{\rho^r}(u)x-\overline{\rho^r}([u,v]_T)\alpha(x)-\overline{\rho^r}(\phi(u))\overline{\rho^r}(v)x=\Big(-[T\rho^l(x)u,T\phi(v)]+T\rho^l(T\rho^l(x)u)\phi(v)\nonumber\\&&+T\rho^r(T\phi(v))\rho^l(x)u \Big)
 +\Big(T\rho^l(x)v,T\phi(u)]-T\rho^l(T\rho^l(x)v)\phi(u)- T\rho^r(T\phi(u))\rho^l(x)v\Big)=0 \mbox{ ( by (\ref{rbHL2})  ). }\nonumber
\end{eqnarray}
 \end{proof}
\section{Representations and relative Rota-Baxter operators of Hom-Leibniz-Poisson algebras.}
This section contains our interesting results. Here, we introduce and study  representations of Hom-Leibniz Poisson algebras. The notions of matched pairs and relative Rota-Baxter operators  of these Hom-algebras are also treated to be in adequacy with the preceding sections. Most of the proofs are complements to the proofs performed in the previous sections.
 \begin{definition}
 A representation of a Hom-Leibniz Poisson algebra $(A, \cdot, [,], \alpha)$  is a sextuple $(V, \lambda^l,\lambda^r ,\rho^l,\rho^r,\phi)$ where $V$ is a vector space, $\phi\in gl(V)$ and $\rho^l, \rho^r, \lambda^l,\lambda^r: A\rightarrow gl(V)$ are five linear maps
such that 
\begin{enumerate}
\item $(V,\lambda^l,\lambda^r,\phi)$ is a representation of the Hom-associative algebra 
 $(A,\cdot,\alpha).$
 \item $(V,\rho^l,\rho^r,\phi)$ is a representation of the Hom-Leibniz algebra 
 $(A,[,],\alpha).$
 \item the following equalities hold for all $x, y \in A:$
 \begin{eqnarray}
 \rho^r(\alpha(y)\lambda^l(x)
 =\lambda^l(\alpha(x))\rho^r(y)+\lambda^l([x,y])\phi\label{rHLP1}\\
 \rho^r(\alpha(y))\lambda^r(x)=\lambda^r([x,y])\phi+\lambda^r(\alpha(x))\rho^r(y)\label{rHLP2}\\
\rho^l(x\cdot y)\phi=\lambda^l(\alpha(x))\rho^l(y)+\lambda^r(\alpha(y))\rho^l(x)\label{rHLP3}
\end{eqnarray}
\end{enumerate}
\end{definition}
As a generalization of Proposition \ref{PEx1} and Proposition \ref{PEx2}, the following result allows to give examples of representations of Hom-Leibniz Poisson algebras.
\begin{proposition}
 Let $\mathcal{A}_1:=(A_1,\mu_1, [,]_1,\alpha_1)$ and $\mathcal{A}_2:=(A_2,\mu_2,[,]_2,\alpha_2)$ be two Hom-Leibniz Poisson algebras and $f:\mathcal{A}_1\rightarrow \mathcal{A}_2$ be a morphism of Hom-Leibniz Poisson algebras. Then $A_2^f:=(A_2, \lambda^l,\lambda^r, \rho^l,\rho^r,\alpha_2)$ is a representaion of $\mathcal{A}_1$ where $\lambda^l(a)b:=\mu_2(f(a),b),\ \lambda^r(a,b):=\mu_2(b,f(a)),\ 
 \rho^l(a)b:=[f(a),b]_2,\ \rho^r(a,b):=[b,f(a)]_2$ for all $(a,b)\in A\times B.$
\end{proposition}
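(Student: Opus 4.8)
The plan is to verify the three defining conditions of a representation of a Hom-Leibniz Poisson algebra one after another. The first two conditions are immediate from results already proved, so the genuine content lies in the mixed compatibility axioms (\ref{rHLP1})--(\ref{rHLP3}).

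First I would note that since $f$ is a morphism of Hom-Leibniz Poisson algebras, it is in particular a morphism of the underlying Hom-associative algebras $(A_1,\mu_1,\alpha_1)\to(A_2,\mu_2,\alpha_2)$ and a morphism of the underlying Hom-Leibniz algebras $(A_1,[,]_1,\alpha_1)\to(A_2,[,]_2,\alpha_2)$. Applying Proposition \ref{PEx1} to the former gives that $(A_2,\lambda^l,\lambda^r,\alpha_2)$ is a representation of the Hom-associative algebra $(A_1,\mu_1,\alpha_1)$, which is exactly condition (1), and applying Proposition \ref{PEx2} to the latter gives that $(A_2,\rho^l,\rho^r,\alpha_2)$ is a representation of the Hom-Leibniz algebra $(A_1,[,]_1,\alpha_1)$, which is condition (2). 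No new computation is needed for these.

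The remaining work, and the main obstacle, is to check the three compatibility identities, all of which rest on the single compatibility axiom (\ref{cHLP}) in $\mathcal{A}_2$ together with the facts that $f$ intertwines both operations and satisfies $\alpha_2 f=f\alpha_1$ (so that $\phi=\alpha_2$ plays its expected role). For each identity I would evaluate both sides on an arbitrary $z\in A_2$ and unfold the explicit formulas for $\lambda^l,\lambda^r,\rho^l,\rho^r$. For (\ref{rHLP1}) the left-hand side becomes $[f(x)\cdot z,\alpha_2 f(y)]_2$; applying (\ref{cHLP}) in $\mathcal{A}_2$ to the triple $(f(x),z,f(y))$ and then substituting $\alpha_2 f=f\alpha_1$ and $f([x,y]_1)=[f(x),f(y)]_2$ reproduces the right-hand side. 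For (\ref{rHLP2}) the left-hand side is $[z\cdot f(x),\alpha_2 f(y)]_2$, and (\ref{cHLP}) applied to $(z,f(x),f(y))$ yields the claim after the same substitutions. Finally, for (\ref{rHLP3}) the morphism property turns the left-hand side into $[f(x)\cdot f(y),\alpha_2(z)]_2$, and (\ref{cHLP}) applied to $(f(x),f(y),z)$ produces the two terms on the right.

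The only delicate point is bookkeeping: each of the three compatibility identities arises from a different placement of the arguments $f(x),f(y)$ and $z$ inside the one axiom (\ref{cHLP}), so care is needed to assign the correct element to each slot. Once this is done and the relations $\alpha_2 f=f\alpha_1$, $f\circ\mu_1=\mu_2\circ f^{\otimes 2}$ and $f([\,,\,]_1)=[\,,\,]_2\circ f^{\otimes 2}$ are invoked, every term matches by inspection, and no Hom-associativity or Hom-Leibniz identity beyond those already packaged in Propositions \ref{PEx1} and \ref{PEx2} is required.
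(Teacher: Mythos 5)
Your proposal is correct and takes essentially the same route as the paper: conditions (1) and (2) are delegated to Propositions \ref{PEx1} and \ref{PEx2}, and each of (\ref{rHLP1})--(\ref{rHLP3}) is obtained by evaluating on $z\in A_2$ and applying (\ref{cHLP}) in $\mathcal{A}_2$ to the triples $(f(x),z,f(y))$, $(z,f(x),f(y))$ and $(f(x),f(y),z)$ respectively, together with $\alpha_2 f=f\alpha_1$ and the fact that $f$ intertwines both products. Your argument-placement bookkeeping matches the paper's computations exactly.
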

\begin{proof}
 Thanks to Proposition \ref{PEx1} and Proposition \ref{PEx2}, $(A_2, \lambda^l,\lambda^r,\alpha_2)$  and $(A_2,  \rho^l,\rho^r,\alpha_2)$ are representations of the Hom-associative algebra $(A_1, \mu_1,\alpha_1)$ and the Hom-Leibniz algebra $(
 A_1,\alpha_1,[,]_1)$ respectively. It remains to prove (\ref{rHLP1})-(\ref{rHLP3}). Let $x,y\in A_1$ and $z\in A_2.$ Since $f$ is a morphism, using (\ref{cHLP}) in $\mathcal{A}_2$, we get
 \begin{eqnarray}
 && \rho^r(\alpha_1(y))\lambda^l(x)z=
  [\mu_2(f(x),z),f\alpha_1(y)]_2=
  [\mu_2(f(x),z),\alpha_2f(y)]_2=
  \mu_2(\alpha_2f(x),[z,f(y)]_2\nonumber\\
  &&+\mu_2([f(x),f(y)]_2,\alpha_2(z))=
  \mu_2(f\alpha_1(x),[z,f(y)]_2
  +\mu_2(f([x,y]_1),\alpha_2(z))=\lambda^l(\alpha_1(x))\rho^r(y)z\nonumber\\
  &&+\lambda^l([x,y]_1)\alpha_2(z). \nonumber  
 \end{eqnarray}
Hence, (\ref{rHLP1}) holds and similarly, (\ref{rHLP2}) is obtained. Finally, by same hypothesis, we have
\begin{eqnarray}
 &&\rho^l(\mu_1(x,y))\alpha_2(z)=[f\mu_1(x,y),\alpha_2(z)]_2=
 [\mu_2(f(x),f(y)),\alpha_2(z)]_2
 =\mu_2(\alpha_2f(x),[f(y),z]_2)\nonumber\\
  &&+\mu_2([f(x),z]_2,\alpha_2f(y))=
  \mu_2(f\alpha_1(x),[f(y),z]_2)
  +\mu_2([f(x),z]_2),f\alpha_1(y))
  =\lambda^l(\alpha_1(x))\rho^l(y)z\nonumber\\
  &&+\lambda^r(\alpha_1(y))\rho^l(x)z\nonumber
\end{eqnarray}
which means that (\ref{rHLP3}) holds.
\end{proof}
\begin{example}
\begin{enumerate}
 \item Let $(A, \cdot, [,], \alpha)$ be a Hom-Leibniz Poisson algebra.
 Define left multiplications 
 $L, l: A\rightarrow gl(A)$ and
  right multiplications $R,r: A\rightarrow gl(A)$ by $L_x y:=[x, y];\  l_xy:=x\cdot y$ and $R_x y:= [y,x];\  r_xy:=y\cdot x$  for all $x, y \in A.$ Then $(A, l,r, L, R,\alpha)$ is a
representation of $(A, \cdot, [,], \alpha),$ which is called a regular representation.
 \item Let $(A,\cdot,[,],\alpha)$ be a Hom-Leibniz Poisson  algebra and $(B,\alpha)$ de a two-sided Hom-ideal of $(A,\cdot,[,],\alpha).$ Then $(B,\alpha)$  inherits a structure of representation of $(A,\cdot,[,],\alpha)$ where $\lambda^l(a)b:=a\cdot b; \lambda^r(a)b:=b\cdot a$ and
 $\rho^l(a)b:=[a,b];\ \rho^r(b,a):=[b,a]$ for all $(a,b)\in A\times B.$
\end{enumerate}
\end{example}
\begin{proposition}
 Let $\mathcal{V}:=(V, \lambda^l,\lambda^r,\rho^l,\rho^r,\phi)$ be a representation of a Hom-Leibniz Poisson algebra $\mathcal{A}:=(A,\cdot,[,],\alpha)$ and $\beta$ be a self-morphism of $\mathcal{A}=(A, \cdot, [,],\alpha)$. Then 
 $\mathcal{V}_{\beta}=(V,    
 \lambda^l_{\beta}:=\lambda^l\beta,\lambda^r_{\beta}:=\lambda^r\beta,
 \rho^l_{\beta}:=\rho^l\beta,\rho^r_{\beta}:=\rho^r\beta,\phi)$ is a representation of $\mathcal{A}.$ 
\end{proposition}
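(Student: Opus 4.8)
The plan is to exploit the layered structure of the definition of a representation of a Hom-Leibniz Poisson algebra, treating its three ingredients separately. Since $\mathcal{V}$ is a representation of $\mathcal{A}$, the quadruple $(V,\lambda^l,\lambda^r,\phi)$ is a representation of the Hom-associative algebra $(A,\cdot,\alpha)$ and $(V,\rho^l,\rho^r,\phi)$ is a representation of the Hom-Leibniz algebra $(A,[,],\alpha)$. As $\beta$ is in particular a self-morphism of each of these two underlying Hom-algebras, Proposition \ref{sRHas} immediately yields that $(V,\lambda^l_{\beta},\lambda^r_{\beta},\phi)$ is a representation of $(A,\cdot,\alpha)$, and Proposition \ref{sRHLeib} yields that $(V,\rho^l_{\beta},\rho^r_{\beta},\phi)$ is a representation of $(A,[,],\alpha)$. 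This disposes of conditions (1) and (2) in the definition of a representation of $\mathcal{A}$ without any further computation.

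It then remains only to verify the three compatibility conditions (\ref{rHLP1})--(\ref{rHLP3}) for the twisted maps. The two facts needed are that $\beta$ commutes with $\alpha$ (it is a morphism of the underlying Hom-modules) and that $\beta$ preserves both products, i.e. $\beta(x\cdot y)=\beta(x)\cdot\beta(y)$ and $\beta([x,y])=[\beta(x),\beta(y)]$. For instance, to establish (\ref{rHLP1}) I would compute $\rho^r_{\beta}(\alpha(y))\lambda^l_{\beta}(x)=\rho^r(\beta\alpha(y))\lambda^l(\beta(x))=\rho^r(\alpha\beta(y))\lambda^l(\beta(x))$, apply (\ref{rHLP1}) for $\mathcal{V}$ to the arguments $\beta(x),\beta(y)$, and then rewrite $[\beta(x),\beta(y)]=\beta([x,y])$ together with $\alpha\beta=\beta\alpha$ to recognize the result as $\lambda^l_{\beta}(\alpha(x))\rho^r_{\beta}(y)+\lambda^l_{\beta}([x,y])\phi$. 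The verifications of (\ref{rHLP2}) and (\ref{rHLP3}) are entirely parallel, the latter using the associative morphism property $\beta(x\cdot y)=\beta(x)\cdot\beta(y)$ in place of the bracket one.

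There is no genuine obstacle here: the argument is a direct imitation of the proofs of Propositions \ref{sRHas} and \ref{sRHLeib}, with the mixed conditions (\ref{rHLP1})--(\ref{rHLP3}) handled by the same substitution trick. The only point requiring a little care is to invoke the correct morphism property of $\beta$ for each condition --- the bracket-preserving property for (\ref{rHLP1}) and (\ref{rHLP2}), and the product-preserving property for (\ref{rHLP3}) --- together with the commutation $\alpha\beta=\beta\alpha$, which is precisely what reconciles the $\beta$-twist with the $\alpha$ already appearing inside the structure equations.
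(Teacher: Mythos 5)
Your proposal is correct and follows exactly the paper's own route: invoke Proposition \ref{sRHas} and Proposition \ref{sRHLeib} for the Hom-associative and Hom-Leibniz parts, then verify (\ref{rHLP1})--(\ref{rHLP3}) by substituting $\beta(x),\beta(y)$ into the corresponding identities for $\mathcal{V}$ and using $\alpha\beta=\beta\alpha$ together with the morphism properties of $\beta$. Nothing is missing.
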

\begin{proof}
 We know that $(V,    
 \lambda^l_{\beta}:=\lambda^l\beta,\lambda^r_{\beta}:=\lambda^r\beta,\phi)$ is a representation of the Hom-associative algebra $(A, \cdot,\alpha)$  by Proposition \ref{sRHas}  and
  $(V,\rho^l_{\beta}:=\rho^l\beta,\rho^r_{\beta}:=\rho^r\beta,\phi)$ is a representation of the Hom-Leibniz algebra
  $(A, [,],\alpha)$ thanks to Proposition \ref{sRHLeib}. It remains to prove (\ref{rHLP1})-(\ref{rHLP3}). Now, for all $x,y\in A,$ since $\beta$ is a morphism, we obtain by (\ref{rHLP1}), 
   and (\ref{rHLP3}) in 
  $\mathcal{V}$ respectively
  \begin{eqnarray}
&&\rho^r_{\beta}(\alpha(y))\lambda^l_{\beta}(x)=\rho^r(\beta\alpha(y))\lambda^l(\beta(x))=\rho^r(\alpha\beta(y))\lambda^l(\beta(x))=\lambda^l(\alpha\beta(x))\rho^r(\beta(y)),\nonumber\\
&&+\lambda^l([\beta(x),\beta(y)])\phi=
\lambda^l(\beta\alpha(x))\rho^r(\beta(y))
+\lambda^l(\beta([x,y]))\phi
=\lambda^l_{\beta}(\alpha(x))\rho^r_{\beta}(y)+\lambda^l_{\beta}([x,y])\phi,\nonumber\\
 &&\rho^l_{\beta}(x\dot y)\phi=\rho^l(\beta(x)\cdot\beta(y))\phi=
 \lambda^l(\alpha\beta(x))\rho^l(\beta(y))+
 \lambda^r(\alpha\beta(y))\rho^l(\beta(x))=
\lambda^l(\beta\alpha(x))\rho^l(\beta(y))
\nonumber\\
 &&
 +\lambda^r_{\beta}(\alpha(y))\rho^l_{\beta}(x)
=\lambda^r_{\beta}(\alpha(y))\rho^l_{\beta}(x).\nonumber
 \end{eqnarray}
Hence, we get (\ref{rHLP1})   and (\ref{rHLP3}) for $\mathcal{V}_{\beta}.$ As for (\ref{rHLP1}), we can also check that (\ref{rHLP2}) is satisfied for $\mathcal{V}_{\beta}.$
\end{proof}

\begin{corollary}
 Let $\mathcal{V}=(V, \lambda^l,\lambda^r ,\rho^l,\rho^r,\phi)$ be a representation of a Hom-Leibniz Poisson algebra $\mathcal{A}=(A,\cdot,[,],\alpha).$ Then 
 $\mathcal{V}^{(n)}=(V, \lambda^l\alpha^n,\lambda^r\alpha^n ,\rho^l\alpha^n,\rho^r\alpha^n,\phi)$ is a representation of $\mathcal{A}$  for each $n\in\mathbb{N}.$
\end{corollary}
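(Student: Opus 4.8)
The plan is to invoke the immediately preceding proposition with the choice of self-morphism $\beta := \alpha^n$. That proposition asserts that whenever $\mathcal{V}$ is a representation of $\mathcal{A}$ and $\beta$ is a self-morphism of $\mathcal{A}$, the twisted sextuple $\mathcal{V}_\beta = (V, \lambda^l\beta, \lambda^r\beta, \rho^l\beta, \rho^r\beta, \phi)$ is again a representation. Since the components of $\mathcal{V}^{(n)}$ are precisely $\lambda^l\alpha^n$, $\lambda^r\alpha^n$, $\rho^l\alpha^n$, $\rho^r\alpha^n$ together with the unchanged $\phi$, it suffices to verify that $\alpha^n$ qualifies as a self-morphism of $\mathcal{A}$ for every $n \in \mathbb{N}$; then $\mathcal{V}^{(n)} = \mathcal{V}_{\alpha^n}$ and the conclusion is immediate.

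First I would observe that $\alpha$ itself is a self-morphism of $\mathcal{A}$. Indeed, the multiplicativity built into the definition of a Hom-Leibniz Poisson algebra gives $\alpha(x \cdot y) = \alpha(x) \cdot \alpha(y)$ and $\alpha([x,y]) = [\alpha(x), \alpha(y)]$ for all $x, y \in A$, while $\alpha \circ \alpha = \alpha \circ \alpha$ holds trivially; these are exactly the defining conditions of a self-morphism of $\mathcal{A}$.

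Next I would argue by induction on $n$ that $\alpha^n$ is a self-morphism. The commutation $\alpha^n \circ \alpha = \alpha \circ \alpha^n$ is automatic, since powers of a single map commute. For compatibility with the two operations, I would write $\alpha^n(x \cdot y) = \alpha^{n-1}(\alpha(x) \cdot \alpha(y))$ and apply the inductive hypothesis, and likewise $\alpha^n([x,y]) = \alpha^{n-1}([\alpha(x), \alpha(y)])$; these collapse to $\alpha^n(x) \cdot \alpha^n(y)$ and $[\alpha^n(x), \alpha^n(y)]$ respectively. Hence $\alpha^n$ is a self-morphism of $\mathcal{A}$ for each $n$, with the base case $n=0$ (the identity) or $n=1$ already established above.

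Applying the preceding proposition with $\beta = \alpha^n$ then yields that $\mathcal{V}^{(n)}$ is a representation of $\mathcal{A}$. I do not expect any genuine obstacle: the entire content of the argument is the verification that $\alpha^n$ is a self-morphism, which reduces to iterating the multiplicativity axiom, and all of the structural verification of the representation axioms has already been carried out in the previous proposition.
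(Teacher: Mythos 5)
Your proposal is correct and is exactly the argument the paper intends: the corollary is stated without proof as an immediate consequence of the preceding proposition, applied with $\beta=\alpha^n$, and your verification that $\alpha^n$ is a self-morphism (using the multiplicativity of $\alpha$ with respect to both $\cdot$ and $[,]$, which is built into the definitions) supplies the only detail the paper leaves implicit.
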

\begin{proposition}
 Let $(A,\cdot,[,],\alpha)$ be a Hom-Leibniz Poisson algebra and let $(V,\phi)$ be a Hom-module and $\lambda^l,\lambda^r, \rho^l,\rho^r : A\rightarrow gl(V)$ be four linear maps. Then, the sextuple $(V, \lambda^l,\lambda^r,\rho^l,\rho^r,\phi )$ is a representation of $(A,\cdot,[,],\alpha)$  if and only if the direct sum of vector spaces, $A\oplus V,$  turns into a Hom-Leibniz Poisson
algebra with the multiplications defined on $A\oplus V$ by
\begin{eqnarray}
 (x+u)\ast(y+v):=x\cdot y+(\lambda^l(x)v+\lambda^r(y)u)\label{sdp1}\\
  \{(x+u),(y+v)\}:=[x,y]+(\rho^l(x)v+\rho^r(y)u)\label{sdp2}\\
 (\alpha\oplus\phi)(x+u):=\alpha(x)+\phi(u)\label{sdp3}
\end{eqnarray}
\end{proposition}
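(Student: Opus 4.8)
The plan is to reduce the three defining axioms of a Hom-Leibniz Poisson structure on $A\oplus V$ to the three clauses in the definition of a representation, leaning on the two semi-direct product results already proved. By definition, the sextuple $(V,\lambda^l,\lambda^r,\rho^l,\rho^r,\phi)$ is a representation of $(A,\cdot,[,],\alpha)$ precisely when (a) $(V,\lambda^l,\lambda^r,\phi)$ is a representation of the Hom-associative algebra $(A,\cdot,\alpha)$, (b) $(V,\rho^l,\rho^r,\phi)$ is a representation of the Hom-Leibniz algebra $(A,[,],\alpha)$, and (c) the compatibility equations (\ref{rHLP1})--(\ref{rHLP3}) hold. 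On the other side, $(A\oplus V,\ast,\{,\},\alpha\oplus\phi)$ is a Hom-Leibniz Poisson algebra precisely when $(A\oplus V,\ast,\alpha\oplus\phi)$ is Hom-associative, $(A\oplus V,\{,\},\alpha\oplus\phi)$ is Hom-Leibniz, and these two structures satisfy the compatibility identity (\ref{cHLP}).

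First I would observe that the operation (\ref{sdp1}) is exactly the semi-direct product multiplication of Proposition \ref{spHAs} and that (\ref{sdp3}) is its structure map; hence by that proposition, clause (a) holds if and only if $(A\oplus V,\ast,\alpha\oplus\phi)$ is Hom-associative. Likewise the bracket (\ref{sdp2}) is the semi-direct product bracket of Proposition \ref{spHLa}, so clause (b) holds if and only if $(A\oplus V,\{,\},\alpha\oplus\phi)$ is Hom-Leibniz. This settles two of the three axioms in each direction simultaneously, and reduces the entire statement to the assertion that, \emph{assuming} (a) and (b), clause (c) is equivalent to the compatibility identity (\ref{cHLP}) on $A\oplus V$.

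For this last equivalence I would expand (\ref{cHLP}) for the triple $(x+u,y+v,z+w)$ directly from the definitions (\ref{sdp1})--(\ref{sdp3}) and then project onto the two summands $A$ and $V$. The $A$-component collapses to $[x\cdot y,\alpha(z)]=\alpha(x)\cdot[y,z]+[x,z]\cdot\alpha(y)$, which is just (\ref{cHLP}) in $A$ and therefore holds automatically. The $V$-component is a sum of terms each carrying exactly one of the free vectors $u$, $v$, $w$; since $u,v,w$ range independently over $V$, the $V$-component vanishes if and only if the coefficient of each of them vanishes separately. Reading off the coefficient of $w$ yields $\rho^l(x\cdot y)\phi(w)=\lambda^l(\alpha(x))\rho^l(y)w+\lambda^r(\alpha(y))\rho^l(x)w$, which is (\ref{rHLP3}); the coefficient of $v$ gives $\rho^r(\alpha(z))\lambda^l(x)v=\lambda^l(\alpha(x))\rho^r(z)v+\lambda^l([x,z])\phi(v)$, which is (\ref{rHLP1}); and the coefficient of $u$ gives $\rho^r(\alpha(z))\lambda^r(y)u=\lambda^r([y,z])\phi(u)+\lambda^r(\alpha(y))\rho^r(z)u$, which is (\ref{rHLP2}).

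I expect the only delicate point to be the bookkeeping in this final expansion---in particular, keeping the arguments of $\lambda^l,\lambda^r,\rho^l,\rho^r$ straight and confirming that the three coefficient identities align verbatim with (\ref{rHLP1})--(\ref{rHLP3}) rather than with some permutation of them. Everything else is a formal consequence of Propositions \ref{spHAs} and \ref{spHLa} together with the bilinearity of the operations, so no essential difficulty arises beyond this routine verification.
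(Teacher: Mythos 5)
Your proposal is correct and follows essentially the same route as the paper: both reduce the associative and Leibniz axioms to Propositions \ref{spHAs} and \ref{spHLa}, then expand the compatibility identity (\ref{cHLP}) on $A\oplus V$ for a triple $(x+u,y+v,z+w)$ and split the $V$-component into the three groups of terms carrying $u$, $v$, $w$, which are exactly (\ref{rHLP2}), (\ref{rHLP1}) and (\ref{rHLP3}). Your identification of which coefficient yields which axiom agrees with the paper's computation, so there is nothing to add.
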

\begin{proof}  Thanks to proofs of Proposition \ref{spHAs} and Proposition \ref{spHLa}, it suffices to prove that $(V, \lambda^l,\lambda^r,\rho^l,\rho^r,\phi )$ satisfies (\ref{rHLP1}), (\ref{rHLP2}) and (\ref{rHLP3}) if and only if  $(A,\cdot,[,],\alpha)$ satisfies (\ref{cHLP}). Using the definition of $\ast$ and $\{,\},$ we have for any $(x,y)\in A^{\times 2},\ (u,v)\in V^{\times 2}:$
 \begin{eqnarray}
  &&\{(x+u)\ast (y+v),(\alpha\oplus)(z+w)\}
  \nonumber\\
  &&=\{x\cdot y+\lambda^l(x)v+\lambda^r(y)u, \alpha(z)+\phi(w)\}\nonumber\\
  &&=[x\cdot y,\alpha(z)]+\rho^l(x\cdot y)\phi(w)+\rho^r(\alpha(z))\lambda^l(x)v+\rho^r(\alpha(z))\lambda^r(y)u\nonumber
  \end{eqnarray}
  and similarly,
  \begin{eqnarray}
  &&(\alpha\oplus\phi)(x+u)\ast\{(y+v),(z+w)\}\nonumber\\
  &&=\big(\alpha(x)\cdot[y,z]+\lambda^l(\alpha(x))\rho^l(y)w
  +\lambda^l(\alpha(x))\rho^r(z)v+
  +\lambda^r([y,z])\phi(u)\big),\nonumber\\
  &&\{(x+u),(z+w)\}\ast(\alpha\oplus\phi)(y+v)\nonumber\\
  &&=\big([x,z]\cdot\alpha(y)+\lambda^l([x,z])\phi(v)+\lambda^r(\alpha(y))\rho^l(x)w
  +\lambda^r(\alpha(y))\rho^r(z)u\big).\nonumber
 \end{eqnarray}
 Hence by (\ref{leib}), we get:
 \begin{eqnarray}
  &&\{(x+u)\ast (y+v),(\alpha\oplus)(z+w)\}-
  (\alpha\oplus\phi)(x+u)\ast\{(y+v),(z+w)\}\nonumber\\
  &&-\{(x+u),(z+w)\}\ast(\alpha\oplus\phi)(y+v)=\Big(\rho^l(x\cdot y)\phi(w)-\lambda^l(\alpha(x))\rho^l(y)w-\lambda^r(\alpha(y))\rho^l(x)w\Big)\nonumber\\
  &&+\Big(\rho^r(\alpha(z))\lambda^l(x)v-\lambda^l(\alpha(x))\rho^r(z)v-\lambda^l([x,z])\phi(v) \Big)+ \Big(\rho^r(\alpha(z))\lambda^r(y)u-\lambda^r([y,z])\phi(u)\nonumber\\
  &&-\lambda^r(\alpha(y))\rho^r(z)u \Big).\nonumber
 \end{eqnarray}
 Therefore, (\ref{rHLP1}), (\ref{rHLP2}) and (\ref{rHLP3}) hold in $(V, \lambda^l,\lambda^r,\rho^l,\rho^r,\phi )$  if and only if (\ref{cHLP}) holds in $(A,\cdot,[,],\alpha)$.
\end{proof}
\begin{definition}
 Let $\mathcal{A}_1:=(A_1,\cdot, [,]_1,\alpha_1)$ and 
 $\mathcal{A}_2:=(A_2,\bullet, [,]_2,\alpha_2)$ be two Hom-Leibniz Poisson algebras. Let $\lambda_1^l, \lambda_1^r,\rho_1^l,\rho_1^r: A_1\rightarrow gl(A_2)$ and 
 $\lambda_2^l, \lambda_2^r,\rho_2^l,\rho_2^r: A_2\rightarrow gl(A_1)$ be linear maps such that $(A_2,\lambda_1^l, \lambda_1^r,\rho_1^l,\rho_1^r, \alpha_2)$ is a representation of $\mathcal{A}_1,$ 
 $(A_1,\lambda_2^l, \lambda_2^r,\rho_2^l,\rho_2^r, \alpha_1)$ is a representation of $\mathcal{A}_2$ such that
 \begin{enumerate}
  \item  $((A_1,\lambda_2^l,\lambda_2^r, \alpha_1),(A_2,\lambda_1^l,\lambda_1^r, \alpha_2))$ is a matched pair of Hom-associative algebras,
  \item $((A_1,\rho_2^l,\rho_2^r, \alpha_1),(A_2,\rho_1^l,\rho_1^r, \alpha_2))$ is a matched pair of Hom-Leibniz algebras,
  \item the following identities
  \begin{eqnarray}
  &&\lambda_2^l(\alpha_2(u))[x,y]_1+(\rho_2^l(u)y)\cdot\alpha_1(x)+\lambda_2^l(\rho_1^r(y)u)\alpha_1(x)-
  [\lambda_2^l(u)x,\alpha_1(y)]_1
  \nonumber\\
  &&-\rho_2^l(\lambda_1^r(x)u)\alpha_1(y)=0\label{mpHLP1}\\
  &&\lambda_2^r(\alpha_2(u))[x,y]_1+\alpha_1(x)\cdot(\rho_2^l(u)y)+\lambda_2^r(\rho_1^r(y)u)\alpha_1(x)
  -[\lambda_2^r(u)x,\alpha_1(y)]_1\nonumber\\
  &&-\rho_2^l(\lambda_1^l(x)u)\alpha_1(y)=0\label{mpHLP2}\\
  &&\lambda_1^l(\alpha_1(x))[u,v]_2+(\rho_1^l(x)v)\bullet\alpha_2(u)+\lambda_1^l(\rho_2^r(v)x)\alpha_2(u)-
  [\lambda_1^l(x)u,\alpha_2(v)]_2
  \nonumber\\
  &&-\rho_1^l(\lambda_2^r(u)x)\alpha_2(v)=0\label{mpHLP3}\\
  &&\lambda_1^r(\alpha_1(x))[u,v]_2+\alpha_2(u)\bullet(\rho_1^l(x)v)+\lambda_1^r(\rho_2^r(v)x)\alpha_2(u)
  -[\lambda_1^r(x)u,\alpha_2(v)]_2\nonumber\\
  &&-\rho_1^l(\lambda_2^l(u)x)\alpha_2(v)=0\label{mpHLP4}\\
  &&\rho_2^r(\alpha_2(u))(x\cdot y)-\alpha_1(x)\cdot(\rho_2^r(u)y)-\lambda_2^r(\rho_1^l(y)u)\alpha_1(x)-
  (\rho_2^r(u)x)\cdot\alpha_1(y)\nonumber\\
  &&-\lambda_2^l(\rho_1^l(x)u)\alpha_1(y)=0\label{mpHLP5}\\
  &&\rho_1^r(\alpha_1(x))(u\bullet v)-\alpha_2(u)\bullet(\rho_1^r(x)v)-\lambda_1^r(\rho_2^l(v)x)\alpha_2(u)-
  (\rho_1^r(x)u)\bullet\alpha_2(v)\nonumber\\
  &&-\lambda_1^l(\rho_2^l(u)x)\alpha_2(v)=0\label{mpHLP6}
 \end{eqnarray}
 hold for all $x,y,z\in A_1,$ $u,v,w\in A_2.$
 \end{enumerate}
Then $A^{\Join}:=((A_1,\lambda_2^l, \lambda_2^r,\rho_2^l,\rho_2^r, \alpha_1),(A_2,\lambda_1^l, \lambda_1^r,\rho_1^l,\rho_1^r, \alpha_2))$ is called a matched pair of Hom-Leibniz Poisson algebras.
\end{definition}
\begin{proposition}
 Let $((A_1,\lambda_2^l, \lambda_2^r,\rho_2^l,\rho_2^r, \alpha_1),(A_2,\lambda_1^l, \lambda_1^r,\rho_1^l,\rho_1^r, \alpha_2))$ be a matched pair of Hom-Leibniz Poisson
algebra. Then, there is a Hom-Leibniz Poisson algebra\\ $(A_1\oplus A_2, \ast,
\{,\},\alpha_1\oplus\alpha_2)$ defined by
\begin{eqnarray}
 (x+u)\ast(y+v)&=&(x\cdot y+\lambda_2^l(u)y+\lambda_2^r(v)x)+(u\bullet v+\lambda_1^l(x)v+\lambda_1^r(y)u) \label{op1}\\
 \{(x+u),(y+v)\}&=&([x,y]_1+\rho_2^l(u)y+\rho_2^r(v)x)+([u,v]_2+\rho_1^l(x)v+\rho_1^r(y)u) \label{op2}\\
 (\alpha_1\oplus\alpha_2)(x+u)&=&\alpha_1(x)+\alpha_2(u)\nonumber
\end{eqnarray}
\end{proposition}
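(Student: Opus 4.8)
The plan is to verify the three defining axioms of a Hom-Leibniz Poisson algebra for the triple $(A_1\oplus A_2,\ast,\{,\},\alpha_1\oplus\alpha_2)$. Axioms (1) and (2) are immediate from the earlier matched-pair constructions: because $((A_1,\lambda_2^l,\lambda_2^r,\alpha_1),(A_2,\lambda_1^l,\lambda_1^r,\alpha_2))$ is a matched pair of Hom-associative algebras, the proposition on matched pairs of Hom-associative algebras shows that $(A_1\oplus A_2,\ast,\alpha_1\oplus\alpha_2)$ is a Hom-associative algebra; and because $((A_1,\rho_2^l,\rho_2^r,\alpha_1),(A_2,\rho_1^l,\rho_1^r,\alpha_2))$ is a matched pair of Hom-Leibniz algebras, the proposition on matched pairs of Hom-Leibniz algebras shows that $(A_1\oplus A_2,\{,\},\alpha_1\oplus\alpha_2)$ is a Hom-Leibniz algebra. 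In particular both $\ast$ and $\{,\}$ are multiplicative with respect to $\alpha_1\oplus\alpha_2$, so it remains only to establish the compatibility identity (\ref{cHLP}).

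To do so, I would fix $X=x+u$, $Y=y+v$, $Z=z+w$ with $x,y,z\in A_1$ and $u,v,w\in A_2$, and expand the three contributions $\{X\ast Y,(\alpha_1\oplus\alpha_2)(Z)\}$, $(\alpha_1\oplus\alpha_2)(X)\ast\{Y,Z\}$ and $\{X,Z\}\ast(\alpha_1\oplus\alpha_2)(Y)$ using the definitions (\ref{op1}) and (\ref{op2}). Each contribution decomposes into an $A_1$-valued part and an $A_2$-valued part, and within each part one finds pure products such as $[x\cdot y,\alpha_1(z)]_1$, $\alpha_1(x)\cdot[y,z]_1$, $[u\bullet v,\alpha_2(w)]_2$, together with mixed terms in which exactly one structure map $\lambda_i^{l/r}$ or $\rho_i^{l/r}$ is applied to the output of another such map.

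The verification then proceeds group by group. The pure $A_1$-terms $[x\cdot y,\alpha_1(z)]_1-\alpha_1(x)\cdot[y,z]_1-[x,z]_1\cdot\alpha_1(y)$ vanish by (\ref{cHLP}) in $\mathcal{A}_1$, and the pure $A_2$-terms vanish analogously by (\ref{cHLP}) in $\mathcal{A}_2$. The mixed terms assemble into six clusters, sorted by the summand in which they land and by the ordered pair of acting maps; each cluster is precisely the left-hand side of one of the new conditions (\ref{mpHLP1})--(\ref{mpHLP6}) and therefore vanishes. Hence the alternating sum of the three contributions is zero, which is exactly (\ref{cHLP}) for $(A_1\oplus A_2,\ast,\{,\},\alpha_1\oplus\alpha_2)$.

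The main obstacle is entirely bookkeeping: the three expansions produce many mixed terms, and the delicate point is to collect them so that each collection matches one of (\ref{mpHLP1})--(\ref{mpHLP6}) verbatim. I would manage this by first separating the $A_1$-valued and $A_2$-valued contributions and then sorting within each by the ordered pair of structure maps that appears; the six conditions have been tailored so that, after this sorting, the six residual clusters are exactly their left-hand sides, and no identity beyond (\ref{cHLP}) in each factor and (\ref{mpHLP1})--(\ref{mpHLP6}) is required.
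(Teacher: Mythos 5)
Your overall strategy is the same as the paper's: invoke the matched-pair results for the Hom-associative and Hom-Leibniz structures to get axioms (1) and (2), then verify the compatibility identity (\ref{cHLP}) by expanding the three terms and cancelling in groups. However, there is a concrete error in how you claim the cancellation closes. You assert that, once the pure terms are removed (handled by (\ref{cHLP}) in each factor), the remaining mixed terms assemble into six clusters, each \emph{precisely} the left-hand side of one of (\ref{mpHLP1})--(\ref{mpHLP6}), and that ``no identity beyond (\ref{cHLP}) in each factor and (\ref{mpHLP1})--(\ref{mpHLP6}) is required.'' That is not the case. The expansion of $\{(x+u)\ast(y+v),(\alpha_1\oplus\alpha_2)(z+w)\}$ contains the terms $\rho_1^l(x\cdot y)\alpha_2(w)$, $\rho_1^r(\alpha_1(z))\lambda_1^l(x)v$, $\rho_1^r(\alpha_1(z))\lambda_1^r(y)u$ and their $A_1$-valued analogues $\rho_2^l(u\bullet v)\alpha_1(z)$, $\rho_2^r(\alpha_2(w))\lambda_2^l(u)y$, $\rho_2^r(\alpha_2(w))\lambda_2^r(v)x$, in which a bracket action is evaluated on an associative product, or composed with a multiplication action, \emph{within a single representation}. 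None of these occurs in (\ref{mpHLP1})--(\ref{mpHLP6}); they cancel against the matching terms of the other two expansions only via the representation compatibility conditions (\ref{rHLP3}), (\ref{rHLP1}) and (\ref{rHLP2}) applied to each of the two representations --- exactly the identities the paper invokes for its terms labelled $(j_2)$, $(j_3)$, $(j_4)$. A count makes the shortfall explicit: the three expansions produce $48$ mixed terms, the six matched-pair conditions absorb $5\times 6=30$ of them, and the remaining $18$ are precisely the ones requiring (\ref{rHLP1})--(\ref{rHLP3}).

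The gap is repairable, since (\ref{rHLP1})--(\ref{rHLP3}) hold by the hypothesis that $(A_2,\lambda_1^l,\lambda_1^r,\rho_1^l,\rho_1^r,\alpha_2)$ and $(A_1,\lambda_2^l,\lambda_2^r,\rho_2^l,\rho_2^r,\alpha_1)$ are representations; but as written, your sorting scheme would leave those $18$ terms uncancelled, so the final step of the argument fails until you add these identities to the list used in the cancellation.
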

\begin{proof}
 It is clear that $(A_1\oplus A_2, \ast,\alpha_1\oplus\alpha_2)$ is a Hom-associative algebra (see Proposition \ref{spHAs}) and $(A_1\oplus A_2, \{,\},\alpha_1\oplus\alpha_2)$ is a Hom-Leibniz algebra (see Proposition \ref{spHLa}). It remains to prove (\ref{cHLP}). Pick 
 $x,y,z\in A_1$ and $u,v,w\in A_2.$ Then, by a straightforward computation, we get
 \begin{eqnarray}
  &&\{ (x+u)\ast(y+v), (\alpha_1\oplus\alpha_2)(z+w)\}
  \nonumber\\
  &&=
  \underbrace{[x\cdot y,\alpha_1(z)]_1}_{(j_1)}+[\lambda_2^l(u)y,\alpha_1(z)]_1
  +[\lambda_2^r(v)x,\alpha_1(z)]_1\nonumber\\
  &&+\underbrace{\rho_2^l(u\bullet v)\alpha_1(z)}_{(j_2)}+
\rho_2^l(\lambda_1^l(x)v)\alpha_1(z)+\rho_2^l(\lambda_1^r(y)u)\alpha_1(z)\nonumber\\
&&+\rho_2^r(\alpha_2(w))(x\cdot y)+\underbrace{\rho_2^r(\alpha_2(w))\lambda_2^l(u)y}_{(j_3)}+\underbrace{\rho_2^r(\alpha_2(w))\lambda_2^r(v)x}_{(j_4)}\nonumber\\
&&+\underbrace{[u\bullet v,\alpha_2(w)]_2}_{(j_1)}+[\lambda_1^l(x)v,\alpha_2(w)]_2
  +[\lambda_1^r(y)u,\alpha_2(w)]_2\nonumber\\
  &&+\underbrace{\rho_1^l(x\cdot y)\alpha_2(w)}_{(j_2)}+
\rho_1^l(\lambda_2^l(u)y)\alpha_2(w)+\rho_1^l(\lambda_2^r(v)x)\alpha_2(w)\nonumber
\end{eqnarray}
\begin{eqnarray}
&&+\rho_1^r(\alpha_1(z))(u\bullet v)+\underbrace{\rho_1^r(\alpha_1(z))\lambda_1^l(x)v}_{(j_3)}+\underbrace{\rho_1^r(\alpha_1(z))\lambda_1^r(y)u}_{(j_4)},\nonumber\\
&&(\alpha_1\oplus\alpha_2)(x+u)\ast\{y+v,z+w\}\nonumber\\
&&=\alpha_1(x)\cdot[y,z]_1+\alpha_1(x)\cdot(\rho_2^l(v)z)+\alpha_1(x)\cdot(\rho_2^r(w)y)\nonumber\\
&&+\lambda_2^l(\alpha_2(u))[y,z]_1+\lambda_2^l(\alpha_2(u))\rho_2^l(v)z+\lambda_2^l(\alpha_2(u))\rho_2^r(w)y\nonumber\\
&&+\lambda_2^r([v,w]_2)\alpha_1(x)+\lambda_2^r(\rho_1^l(y)w)\alpha_1(x)+\lambda_2^r(\rho_1^r(z)v)\alpha_1(x)\nonumber\\
&&+\alpha_2(u)\bullet[v,w]_1+\alpha_2(u)\bullet(\rho_1^l(y)w)+\alpha_2(u)\bullet(\rho_1^r(z)v)\nonumber\\
&&+\lambda_1^l(\alpha_1(x))[v,w]_2+\lambda_1^l(\alpha_1(x))\rho_1^l(y)w+\lambda_1^l(\alpha_1(x))\rho_1^r(z)v\nonumber\\
&&+\lambda_1^r([y,z]_1)\alpha_2(u)+\lambda_1^r(\rho_2^l(v)z)\alpha_2(u)+\lambda_1^r(\rho_2^r(w)y)\alpha_2(u),\nonumber\\
&&\{x+u,y+v\}\ast(\alpha_1\oplus\alpha_2)(y+v)\nonumber\\
&&=[x,z]_1\cdot\alpha_1(y)+(\rho_2^l(u)z)\cdot\alpha_1(y)+(\rho_2^r(w)x)\cdot\alpha_1(y)\nonumber\\
&&+\lambda_2^l([u,w]_2)\alpha_1(y)+\lambda_2^l(\rho_1^l(x)w)\alpha_1(y)+\lambda_2^l(\rho_1^r(z)u)\alpha_1(y)\nonumber\\
&&+\lambda_2^r(\alpha_2(v))[x,z]_1+\lambda_2^r(\alpha_2(v))\rho_2^l(u)z+\lambda_2^r(\alpha_2(v))\rho_2^r(w)x\nonumber\\
&&+[u,w]_2\bullet\alpha_2(v)+(\rho_1^l(x)w)\bullet\alpha_2(v)+(\rho_1^r(z)u)\bullet\alpha_2(v)\nonumber\\
&&+\lambda_1^l([x,z]_1)\alpha_2(v)+\lambda_1^l(\rho_2^l(u)z)\alpha_2(v)+\lambda_1^l(\rho_2^r(w)x)\alpha_2(v)\nonumber\\
&&+\lambda_1^r(\alpha_1(y))[u,w]_2+\lambda_1^r(\alpha_1(y))\rho_1^l(x)w+\lambda_1^r(\alpha_1(y))\rho_1^r(z)u.\nonumber
 \end{eqnarray}
 Therefore, applying $(\ref{cHLP}), (\ref{rHLP3}),(\ref{rHLP1}), (\ref{rHLP2})$ for $(j_1), (j_2), (j_3), (j_4)$ respectively, we get:
 \begin{eqnarray}
  &&\{ (x+u)\ast(y+v), (\alpha_1\oplus\alpha_2)(z+w)\}-
  (\alpha_1\oplus\alpha_2)(x+u)\ast\{y+v,z+w\}\nonumber\\
  &&-\{x+u,y+v\}\ast(\alpha_1\oplus\alpha_2)(y+v)\nonumber\\
  &&=\Big([\lambda_2^l(u)y,\alpha_1(z)]_1+\rho_2^l(\lambda_1^r(y)u)\alpha_1(z)-(\rho_2^l(u)z)\cdot\alpha_1(y)-\lambda_2^l(\rho_1^r(z)u)\alpha_1(y)\nonumber\\
  &&-\lambda_2^l(\alpha_2(u))[y,z]_1\Big)+\Big([\lambda_2^r(v)x,\alpha_1(z)]_1+\rho_2^l(\lambda_1^l(x)v)\alpha_1(z)-(\lambda_2^r(\rho_1^r(z)v)\alpha_1(x)-\nonumber\\
  &&-\alpha_1(x)\cdot(\rho_2^l(v)z)-\lambda_2^r(\alpha_2(v))[x,z]_1 \Big)+
  \Big([\lambda_1^l(x)v,\alpha_2(w)]_2+\rho_1^l(\lambda_2^r(v)x)\alpha_2(w)-
  \nonumber\\
  &&(\rho_1^l(x)w)\bullet\alpha_2(v)-\lambda_1^l(\rho_2^r(w)x)\alpha_2(v)
  -\lambda_1^l(\alpha_1(x))[v,w]_2\Big)+
  \Big([\lambda_1^r(y)u,\alpha_2(w)]_2\nonumber\\
  &&+\rho_1^l(\lambda_2^l(u)y)\alpha_2(w)-
  \lambda_1^r(\rho_2^r(w)y)\alpha_2(u)-\alpha_2(u)\bullet(\rho_1^l(y)w)-\lambda_1^r(\alpha_1(y))[u,w]_2\Big)\nonumber\\
  &&\Big(\rho_2^r(\alpha_2(w))(x\cdot y)-\alpha_1(x)\cdot(\rho_2^r(w)y)-\lambda_2^r(\rho_1^l(y)w)\alpha_1(x)-(\rho_2^r(w)x)\cdot\alpha_1(y)\nonumber\\
  &&-\lambda_2^l(\rho_1^l(x)w)\alpha_1(y)\Big)+
  \Big(\rho_1^r(\alpha_1(z))(u\bullet v)-\alpha_2(u)\bullet(\rho_1^r(z)v)-\lambda_1^r(\rho_2^l(v)z)\alpha_2(u)\nonumber\\
  &&-(\rho_1^r(z)u)\bullet\alpha_2(v)-\lambda_1^l(\rho_2^l(u)z)\alpha_2(v)\Big)
=0 \mbox{ ( by  (\ref{mpHLP1})-(\ref{mpHLP6}) ).}\nonumber
  \end{eqnarray}
\end{proof}
\begin{definition}
 Let $(V,\lambda^l,\lambda^r,\rho^l,\rho^r,\phi )$ be a representation of a Hom-Leibniz algebra $(A,\cdot, [,],\alpha).$ A linear operator $T : V\rightarrow A$ is called a relative Rota-Baxter operator on $(A,\cdot, [,],\alpha)$ with respect to $(V,\lambda^l,\lambda^r,\rho^l,\rho^r,\phi )$ if $T$ satisfies (\ref{rbHAs1}), (\ref{rbHAs2})  and (\ref{rbHL2}), i.e.,
\begin{eqnarray}
 && T\phi=\alpha T \nonumber\\
 && Tu\cdot Tv=T(\lambda^l(Tu)+\lambda^r(Tv)u) 
 \mbox{ for all $u,v\in V$}\nonumber\\ 
  && [Tu,Tv]=T(\rho^l(Tu)+\rho^r(Tv)u) \mbox{ for all $u,v\in V$} \nonumber
\end{eqnarray}
\end{definition}
Observe that as Hom-associative and Hom-Leibniz algebras case, Rota-Baxter operators on Hom-Leibniz Poisson algebras are relative Rota-Baxter operators with
respect to the regular representation.
\begin{example}
 Let $(A, \cdot, [,], \alpha)$ be a Hom-Leibniz Poisson algebra and $(V,\lambda^l,\lambda^r,\rho^l,\rho^r, \phi)$ be a representation of $(A, \cdot, [,], \alpha).$
It is easy to verify that $A\oplus V$ is a representation of $(A, \cdot, [,], \alpha)$ under the maps $\lambda_{A\oplus V}^l,\lambda_{A\oplus V}^r,\rho_{A\oplus V}^l,\rho_{A\oplus V}^r: A\rightarrow gl(A\oplus V)$ defined by
\begin{eqnarray}
 &&\lambda_{A\oplus V}^l(a)(b+v):=a\cdot v+\lambda^l(a)v;\ \lambda_{A\oplus V}^r(a)(b+v):=\lambda^r(a)v; \nonumber\\
 &&\rho_{A\oplus V}^l(a)(b+v):=[a,b]+\rho^l(a)v; \ \rho_{A\oplus V}^r(a)(b+v):=\rho^r(a)v.\nonumber
\end{eqnarray}
Define the linear map $T: A\oplus V\rightarrow A, a+v\mapsto a.$ Then $T$ is a relative Rota-Baxter operator on $A$ with
respect to the representation $A\oplus V.$
\end{example}
Also, Example \ref{exHAsr} and Example \ref{exHLr} give rise to the following example of relative Rota-Baxter on a Hom-Leibniz Poisson algebra.
\begin{example} Consider the $2$-dimensional  Hom-Leibniz Poisson algebra 
$(A,\cdot,[,],\alpha)$  where the non-zero products with respect to a basis $(e_1,e_2)$ are given by: $e_1\cdot e_2=e_2\cdot e_1:=-e_1,\ e_2\cdot e_2:=e_1+e_2;\ [e_1,e_2]=-[e_2,e_1]:=e_1$ and $\alpha(e_1):=-e_1,\ \alpha(e_2):=e_1+e_2.$
\\
 Then, one can prove that the zero-map  is the only  relative Rota-Baxter operator on $(A,\cdot, [,]\alpha)$  with respect to the regular representation. 
\end{example}
As Hom-associative algebras case \cite{tcsmam}, let give some characterizations of relative Rota-Baxter operators on Hom-Leibniz Poisson algebras.
\begin{proposition}
 A linear map $T : V\rightarrow A$ is a relative Rota-Baxter operator on a Hom-Leibniz Poisson algebra $(A,\cdot,[,],\alpha)$
with respect to the representation $(V,\lambda^l,\lambda^r,\rho^l,\rho^r,\phi)$ if and only if the graph of $T,$
$$G_r(T):=\{(T(v), v), v\in  V\}$$
is a subalgebra of the semi-direct product algebra $A\oplus V.$
\end{proposition}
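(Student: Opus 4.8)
The plan is to unwind what it means for the graph $G_r(T)$ to be a subalgebra of the semi-direct product $A\oplus V$ and to match each closure requirement against one of the three defining axioms of a relative Rota-Baxter operator. First I would recall that, since $(V,\lambda^l,\lambda^r,\rho^l,\rho^r,\phi)$ is a representation of $(A,\cdot,[,],\alpha)$, the preceding proposition equips $A\oplus V$ with the structure of a Hom-Leibniz Poisson algebra via the operations (\ref{sdp1}), (\ref{sdp2}) and (\ref{sdp3}). Because $T$ is linear, $G_r(T)=\{(Tv,v):v\in V\}$ is automatically a linear subspace of $A\oplus V$; thus being a subalgebra reduces to three closure conditions: invariance under $\alpha\oplus\phi$, closure under $\ast$, and closure under $\{,\}$.

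Next I would carry out the three elementary computations on arbitrary generators $(Tu,u),(Tv,v)\in G_r(T)$. For the structure map, (\ref{sdp3}) gives $(\alpha\oplus\phi)(Tu+u)=\alpha(Tu)+\phi(u)$, which lies in $G_r(T)$ exactly when $\alpha(Tu)=T(\phi(u))$, i.e. precisely when $T\phi=\alpha T$, which is (\ref{rbHAs1}). For the associative product, (\ref{sdp1}) gives
\begin{eqnarray}
(Tu+u)\ast(Tv+v)=Tu\cdot Tv+\big(\lambda^l(Tu)v+\lambda^r(Tv)u\big),\nonumber
\end{eqnarray}
and this element lies in $G_r(T)$ if and only if its $A$-component is the image under $T$ of its $V$-component, that is, if and only if $Tu\cdot Tv=T(\lambda^l(Tu)v+\lambda^r(Tv)u)$, which is exactly (\ref{rbHAs2}). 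Likewise, (\ref{sdp2}) gives
\begin{eqnarray}
\{(Tu+u),(Tv+v)\}=[Tu,Tv]+\big(\rho^l(Tu)v+\rho^r(Tv)u\big),\nonumber
\end{eqnarray}
so closure under $\{,\}$ is equivalent to $[Tu,Tv]=T(\rho^l(Tu)v+\rho^r(Tv)u)$, which is (\ref{rbHL2}).

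Finally I would assemble the equivalence: the three closure conditions hold for all $u,v\in V$ if and only if $T$ satisfies (\ref{rbHAs1}), (\ref{rbHAs2}) and (\ref{rbHL2}) simultaneously, which is by definition the statement that $T$ is a relative Rota-Baxter operator on $(A,\cdot,[,],\alpha)$ with respect to $(V,\lambda^l,\lambda^r,\rho^l,\rho^r,\phi)$. I do not expect a genuine obstacle here; the only point requiring care is the bookkeeping observation that an element $(a,w)\in A\oplus V$ belongs to $G_r(T)$ if and only if $a=T(w)$, so that each closure condition collapses to a single identity equating the $A$-component with $T$ applied to the $V$-component. Once this is noted, the forward and backward directions are read off simultaneously from the three displayed computations.
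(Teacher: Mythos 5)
Your argument is correct and is exactly the standard graph characterization: the paper itself states this proposition without proof, so your write-up supplies the missing argument rather than diverging from one. The three closure conditions (under $\alpha\oplus\phi$, $\ast$, and $\{,\}$) do collapse, via the observation that $(a,w)\in G_r(T)$ iff $a=T(w)$, to the three defining identities of a relative Rota-Baxter operator, and your handling of the structure-map invariance (giving $T\phi=\alpha T$) is the one point that is easy to overlook and you got it right.
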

The following result shows that a relative Rota-Baxter operator can be lifted up the
Rota-Baxter operator.
\begin{proposition}
 Let $(A, \cdot, [,], \alpha)$ be a Hom-Leibniz Poisson algebra, $(V,\lambda^l,\lambda^r,\rho^l,\rho^r,\phi)$ be a representation of $A$ and
$T : V\rightarrow A$ be a linear map. Define 
$\widehat{T}\in End(A\oplus V)$ by 
$\widehat{T}(a+v):=Tv.$ Then T is a relative Rota-Baxter operator
if and only $\widehat{T}$ is a Rota-Baxter operator on $A\oplus V.$
\end{proposition}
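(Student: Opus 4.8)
The plan is to unwind the Rota-Baxter condition on the semi-direct product $A\oplus V$ into exactly the three defining identities of a relative Rota-Baxter operator, exploiting the two structural features of $\widehat{T}$: its image lies entirely in the $A$-component, and it annihilates the $A$-component of its argument (so that $\widehat{T}(a+v)=Tv$ has zero $V$-part and $\widehat{T}$ kills anything purely in $A$). First I would invoke the Remark defining Rota-Baxter Hom-Leibniz Poisson algebras to reduce the claim to two separate statements: $\widehat{T}$ is a Rota-Baxter operator on $A\oplus V$ if and only if it is a Rota-Baxter operator on the Hom-associative algebra $(A\oplus V,\ast,\alpha\oplus\phi)$ \emph{and} on the Hom-Leibniz algebra $(A\oplus V,\{,\},\alpha\oplus\phi)$, where $\ast$, $\{,\}$ and $\alpha\oplus\phi$ are the semi-direct product operations (\ref{sdp1})--(\ref{sdp3}). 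It therefore suffices to translate each Rota-Baxter equation separately.

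Next I would evaluate the associative Rota-Baxter identity on a pair $a+u,\ b+v$. Because $\widehat{T}(a+u)=Tu$ has vanishing $V$-component, the left-hand side $\widehat{T}(a+u)\ast\widehat{T}(b+v)$ collapses to $Tu\cdot Tv$, the mixed action terms $\lambda^l(Tu)0$ and $\lambda^r(Tv)0$ dropping out. On the right-hand side, the crossed products $Tu\cdot b$ and $a\cdot Tv$ land in $A$ and are hence annihilated by the outer $\widehat{T}$, so only $T(\lambda^l(Tu)v+\lambda^r(Tv)u)$ survives. Thus the associative Rota-Baxter equation is equivalent to (\ref{rbHAs2}). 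Running the identical computation with $\{,\}$ replacing $\ast$ and $\rho^l,\rho^r$ replacing $\lambda^l,\lambda^r$ produces exactly (\ref{rbHL2}) from the Leibniz Rota-Baxter equation.

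Finally I would treat the compatibility with the structure map: the condition $\widehat{T}\circ(\alpha\oplus\phi)=(\alpha\oplus\phi)\circ\widehat{T}$ evaluated on $a+v$ reads $T\phi(v)=\alpha(Tv)$, i.e. precisely (\ref{rbHAs1}). Assembling the three equivalences yields the statement in both directions. I do not anticipate a genuine obstacle: the computations are short and symmetric between the associative and Leibniz sides. The only point demanding care is the component bookkeeping, namely tracking which terms land in $A$ (and are therefore erased by $\widehat{T}$) versus which land in $V$; it is exactly this separation that forces the crossed terms to cancel and makes the correspondence transparent.
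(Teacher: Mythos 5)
Your proof is correct and is surely the intended argument: the paper states this proposition without any proof, and your component bookkeeping (the image of $\widehat{T}$ lies in the $A$-summand, so the crossed products $Tu\cdot b$, $a\cdot Tv$, $[Tu,b]$, $[a,Tv]$ land in $A$ and are annihilated by the outer $\widehat{T}$) reduces the two Rota-Baxter identities on the semi-direct product exactly to (\ref{rbHAs2}) and (\ref{rbHL2}). The one caveat is that the paper's definition of a Rota-Baxter operator on a Hom-algebra does not literally include commutation with the structure map, so your derivation of $T\phi=\alpha T$ from $\widehat{T}\circ(\alpha\oplus\phi)=(\alpha\oplus\phi)\circ\widehat{T}$ tacitly assumes that condition is part of the definition --- which it must be for the stated equivalence to recover (\ref{rbHAs1}), and is worth flagging explicitly.
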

In order to give another characterization of relative Rota-Baxter operators, let introduce the following:
\begin{definition}
 Let $(A,\cdot, [,],\alpha)$ be a Hom-Leibniz Poisson algebra. A linear map 
 $N : A\rightarrow A$ is
said to be a Nijenhuis operator if $N\alpha=\alpha N$ and its Nijenhuis torsions vanish, i.e.,
\begin{eqnarray}
 && N(x)\cdot N(y)=N(N(x)\cdot y + x\cdot N(y)-N(x\cdot y)), \mbox{ for all $x, y\in A,$}\nonumber\\
 && [N(x),N(y)]=N([N(x),y] + [x,N(y)]-N([x,y])), \mbox{ for all $x, y\in A.$}\nonumber
\end{eqnarray}
Observe that the deformed multiplications
$\cdot_N, [,]_N: A\oplus A\rightarrow A$ given by
\begin{eqnarray}
 && x\cdot_N y:= N(x)\cdot y+x\cdot N(y)-N(x\cdot y),\nonumber\\
 && [x,y]_N := [N(x),y]+[x,N(y)]-N([x,y])\nonumber
\end{eqnarray}
gives rise to a new Hom-Leibniz Poisson multiplications on $A,$ and $N$ becomes a 
morphism from the Hom-Leibniz Poisson algebra $(A,\cdot_N, [,]_N,\alpha)$ to the initial Hom-Leibniz Poisson algebra $(A,\cdot, [,],\alpha).$
\end{definition}
Now, we can esealy check the following result.
\begin{proposition}
 Let $\mathcal{A}:=(A, \cdot, [,], \alpha)$ be a Hom-Leibniz Poisson algebra and $\mathcal{V}:=(V,\lambda^l,\lambda^r,\rho^l,\rho^r, \phi)$ be a representation of $(A, \cdot, [,], \alpha).$
 A linear map $T: V\rightarrow A$ is a
  relative Rota-Baxter operator on $\mathcal{A}$ with respect to the
$\mathcal{V}$ if and only if $N_T:=\left(
\begin{array}{cc}
 0& T\\
 0& 0
\end{array}
\right)
: A\oplus V \rightarrow A\oplus V$ is a Nijenhuis operator on
the Hom-Leibniz Poisson algebra $A\oplus V.$
\end{proposition}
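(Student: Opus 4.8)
The plan is to carry out the entire argument inside the semi-direct product Hom-Leibniz Poisson algebra $A\oplus V$, whose structure maps $\ast$, $\{,\}$ and $\alpha\oplus\phi$ are those of (\ref{sdp1})--(\ref{sdp3}), and to match each defining condition of a Nijenhuis operator on $A\oplus V$ with the corresponding defining condition of a relative Rota-Baxter operator on $\mathcal{A}$. Two elementary observations drive everything. First, writing a typical element of $A\oplus V$ as $X=x+u$, one has $N_T(x+u)=Tu$, which lies in the summand $A$; so the image of $N_T$ is contained in $A$. Second, as a consequence $N_T^{2}=0$, since $N_T(Tu)=N_T(Tu+0)=T(0)=0$. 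I would also dispose of the commutation condition immediately: $N_T(\alpha\oplus\phi)=(\alpha\oplus\phi)N_T$ reduces directly to $T\phi=\alpha T$, i.e. to (\ref{rbHAs1}) (equivalently (\ref{rbHL1})).

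For the associative Nijenhuis torsion I would substitute $X=x+u$, $Y=y+v$ into $N_T(X)\ast N_T(Y)$ and into $N_T\big(N_T(X)\ast Y+X\ast N_T(Y)-N_T(X\ast Y)\big)$ using (\ref{sdp1}). Since $N_T(X)=Tu$ and $N_T(Y)=Tv$ belong to $A$, the left-hand side collapses to $Tu\cdot Tv$. On the right, the three inner terms expand to $Tu\cdot y+\lambda^l(Tu)v$, to $x\cdot Tv+\lambda^r(Tv)u$, and to $T(\lambda^l(x)v+\lambda^r(y)u)$; grouping by summand, their combination has $A$-component $Tu\cdot y+x\cdot Tv-T(\lambda^l(x)v+\lambda^r(y)u)$ and $V$-component $\lambda^l(Tu)v+\lambda^r(Tv)u$. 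Applying $N_T$ annihilates the $A$-component and returns $T$ of the $V$-component, so the right-hand side equals $T(\lambda^l(Tu)v+\lambda^r(Tv)u)$. Thus the associative torsion vanishes for all $X,Y$ exactly when $Tu\cdot Tv=T(\lambda^l(Tu)v+\lambda^r(Tv)u)$ for all $u,v$, which is (\ref{rbHAs2}). Running the identical bookkeeping with $\{,\}$ and (\ref{sdp2}) in place of $\ast$ and (\ref{sdp1}) turns the Leibniz torsion into $[Tu,Tv]=T(\rho^l(Tu)v+\rho^r(Tv)u)$, that is (\ref{rbHL2}). Since the three Nijenhuis conditions on $N_T$ are then equivalent, respectively, to (\ref{rbHAs1}), (\ref{rbHAs2}) and (\ref{rbHL2}), both implications follow at once.

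The step I expect to demand the most care is the component bookkeeping: one must keep the $A$- and $V$-parts rigorously separated through each expansion and observe that both sides of every torsion identity depend only on $u$ and $v$ and never on $x,y$, so that the quantifier ``for all $X,Y\in A\oplus V$'' genuinely collapses to ``for all $u,v\in V$''. Here the vanishing $N_T^{2}=0$ is exactly what guarantees that no residual $A$-component survives the outer application of $N_T$ to impose a spurious extra constraint; without it one would be left with leftover products in $A$ that could be mistaken for additional conditions. Everything else is a direct substitution into (\ref{sdp1})--(\ref{sdp3}), so I would present only the associative computation in full and remark that the Leibniz case is verbatim the same with $[,]$, $\rho^l$, $\rho^r$ replacing $\cdot$, $\lambda^l$, $\lambda^r$.
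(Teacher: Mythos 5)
Your proposal is correct and is precisely the direct component-by-component verification that the paper intends (the paper only remarks that the result ``can easily be checked'' and omits the computation): the commutation condition reduces to $T\phi=\alpha T$, and since $N_T$ kills the $A$-summand and sends $x+u$ to $Tu$, each Nijenhuis torsion identity collapses to the corresponding relative Rota-Baxter identity, with the observation $N_T^2=0$ correctly ensuring no residual $A$-component imposes extra constraints. No gaps.
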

In the sequel, let give  some results about relative Rota-Baxter operators basing on the previous sections.
\begin{lemma}\label{lHLPRB}
 Let $T$ be a relative Rota-Baxter operator on a Hom-Leibniz Poisson algebra $(A,\cdot ,[,],\alpha)$ with respect to a representation  $(V,\lambda^l,\lambda^r,\rho^l,\rho^r,\phi ).$ If define a map $\diamond$ and a braket $[,]_T$ on $V$ by (\ref{opRBHas}) and (\ref{opRB}), i.e., 
 \begin{eqnarray}
 &&u\diamond v:=\lambda^l(Tu)v+\lambda^r(Tv)u \mbox{ for all $(u, v)\in V^{\times 2}$}\nonumber\\
  &&[u,v]_T:=\rho^l(Tu)v+\rho^r(Tv)u \mbox{ for all $(u, v)\in V^{\times 2}$} \label{opRB}\nonumber
 \end{eqnarray}
then, $(V,\diamond, [,]_T, \phi)$ is a Hom-Leibniz Poisson algebra.
\end{lemma}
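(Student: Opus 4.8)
The plan is to establish the three defining axioms of a Hom-Leibniz Poisson algebra for $(V,\diamond,[,]_T,\phi)$ separately, reusing the two preceding lemmas for the first two. Since $T$ is simultaneously a relative Rota-Baxter operator on the Hom-associative algebra $(A,\cdot,\alpha)$ with respect to $(V,\lambda^l,\lambda^r,\phi)$ and on the Hom-Leibniz algebra $(A,[,],\alpha)$ with respect to $(V,\rho^l,\rho^r,\phi)$, Lemma \ref{lHAsRB} immediately gives that $(V,\diamond,\phi)$ is a Hom-associative algebra, and Lemma \ref{lHLRB} gives that $(V,[,]_T,\phi)$ is a Hom-Leibniz algebra. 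Thus only the compatibility condition (\ref{cHLP}) remains to be checked, namely
$$[u\diamond v,\phi(w)]_T=\phi(u)\diamond[v,w]_T+[u,w]_T\diamond\phi(v)\quad\text{for all }u,v,w\in V.$$

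To treat it I would first record the two morphism identities $T(u\diamond v)=Tu\cdot Tv$ and $T[u,v]_T=[Tu,Tv]$ (Corollaries \ref{cmorpHas} and \ref{cmorp}) together with $T\phi=\alpha T$; these let me convert arguments of the form $T(u\diamond v)$, $T[v,w]_T$ and $T\phi(u)$ into expressions in $Tu,Tv,Tw$ and $\alpha$. Expanding the left-hand side by the definitions (\ref{opRBHas}) and (\ref{opRB}) produces
$$\rho^l(Tu\cdot Tv)\phi(w)+\rho^r(\alpha(Tw))\lambda^l(Tu)v+\rho^r(\alpha(Tw))\lambda^r(Tv)u.$$

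The decisive step is then to rewrite these three summands by means of the mixed representation axioms of the Hom-Leibniz Poisson structure: I would apply (\ref{rHLP3}) to $\rho^l(Tu\cdot Tv)\phi(w)$, apply (\ref{rHLP1}) to $\rho^r(\alpha(Tw))\lambda^l(Tu)v$, and apply (\ref{rHLP2}) to $\rho^r(\alpha(Tw))\lambda^r(Tv)u$. This yields six terms, each a composite of one $\lambda$ with either one $\rho$ or a $\phi$ acting on $w$, $v$ or $u$. Independently, expanding the right-hand side $\phi(u)\diamond[v,w]_T+[u,w]_T\diamond\phi(v)$ by the same definitions and the same three identities of the previous step produces exactly the same six terms, whence matching them termwise closes the proof.

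The main obstacle is not conceptual but combinatorial: one must apply the correct axiom among (\ref{rHLP1})--(\ref{rHLP3}) to each of the three left-hand summands and then verify that, after substituting $T\phi=\alpha T$ and the two morphism identities, each of the terms $\lambda^l(\alpha(Tu))\rho^l(Tv)w$, $\lambda^r(\alpha(Tv))\rho^l(Tu)w$, $\lambda^l(\alpha(Tu))\rho^r(Tw)v$, $\lambda^l([Tu,Tw])\phi(v)$, $\lambda^r([Tv,Tw])\phi(u)$ and $\lambda^r(\alpha(Tv))\rho^r(Tw)u$ arising on the left coincides with one produced on the right. It is worth noting that the computation uses only the representation axioms (\ref{rHLP1})--(\ref{rHLP3}) and not the compatibility (\ref{cHLP}) of the ambient algebra $A$, which is consistent with the semi-direct product description, where (\ref{cHLP}) on $A\oplus V$ reduces precisely to these three mixed identities.
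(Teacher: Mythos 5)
Your proposal is correct and follows essentially the same route as the paper: invoke Lemma \ref{lHAsRB} and Lemma \ref{lHLRB} for the Hom-associative and Hom-Leibniz structures, then verify (\ref{cHLP}) by expanding both sides via the morphism identities $T(u\diamond v)=Tu\cdot Tv$, $T[u,v]_T=[Tu,Tv]$, $T\phi=\alpha T$ and the mixed axioms (\ref{rHLP1})--(\ref{rHLP3}). Your six-term bookkeeping checks out, and your observation that the ambient compatibility (\ref{cHLP}) on $A$ is not needed is accurate.
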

\begin{proof}
We know that $(V,\diamond, \phi)$ is a Hom-associative algebra
  and $(V,[,]_T, \phi)$ is a Hom-Leibniz algebra by Lemma \ref{lHAsRB} and Lemma \ref{lHLRB} respectively. The result is obtained if we prove (\ref{cHLP}). For all $u,v,w\in V,$ using Corollary \ref{cmorpHas}  and Corollary \ref{cmorp} , we get:
  \begin{eqnarray}
   &&[u\diamond v,\phi(w)]_T=\rho^l(Tu\cdot Tv)\phi(w)+\rho^r(Tw)\lambda^l(Tu)v+\rho^r(T\phi(w))\lambda^r(Tv)u,\nonumber\\
&&\phi(u)\diamond [v,w]_T=\lambda^l(T\phi(u))\rho^l(Tv)w+\lambda^l(T\phi(u))\rho^l(Tw)v+\lambda^r([Tv,Tw])\phi(u),\nonumber\\
&&[u,w]\diamond\phi(v)=\lambda^l([Tu,Tv])\phi(v)+\lambda^r(T\phi(v))\rho^l(Tu)w+
\lambda^r(T\phi(v))\rho^r(Tw)u.\nonumber
  \end{eqnarray}
Hence, we get (\ref{cHLP}) by (\ref{rbHAs1}), (\ref{rHLP1}), (\ref{rHLP2}), (\ref{rHLP3}).
\end{proof}
\begin{corollary}\label{cmorpHLP}
 Let $T$ be a relative Rota-Baxter operator on a Hom-Leibniz Poisson algebra $(A,\cdot,[,],\alpha)$ with respect to a representation  $(V,\lambda^l,\lambda^r,\rho^l,\rho^r,\phi ).$ Then, $T$ is a morphism from the Hom-Leibniz Poisson algebra 
 $(V,\diamond, [,]_T,\phi)$ to the initial Hom-Leibniz Poisson algebra $(A,[,],\alpha).$
\end{corollary}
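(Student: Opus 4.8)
The plan is to reduce the statement to the two morphism results already proved for the constituent structures, exploiting that a morphism of Hom-Leibniz Poisson algebras is, by definition, nothing more than a map which is simultaneously a morphism of the underlying Hom-associative algebras and a morphism of the underlying Hom-Leibniz algebras. So the proof splits cleanly into the associative part and the Leibniz part, with no genuine computation left to do.

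First I would note that the target of the claim is well-posed: by Lemma \ref{lHLPRB} the triple $(V,\diamond,[,]_T,\phi)$ is indeed a Hom-Leibniz Poisson algebra, so it makes sense to ask whether $T$ is a morphism onto $(A,\cdot,[,],\alpha)$. Next, viewing $T$ purely as a relative Rota-Baxter operator on the Hom-associative algebra $(A,\cdot,\alpha)$ with respect to $(V,\lambda^l,\lambda^r,\phi)$, Corollary \ref{cmorpHas} gives at once that $T$ is a morphism of Hom-associative algebras from $(V,\diamond,\phi)$ to $(A,\cdot,\alpha)$. Concretely this records the two facts $T\phi=\alpha T$, which is condition \eqref{rbHAs1}, and $T(u\diamond v)=Tu\cdot Tv$, which is exactly the defining identity \eqref{rbHAs2} read through the definition of $\diamond$.

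Symmetrically, regarding $T$ as a relative Rota-Baxter operator on the Hom-Leibniz algebra $(A,[,],\alpha)$ with respect to $(V,\rho^l,\rho^r,\phi)$, Corollary \ref{cmorp} yields that $T$ is a morphism of Hom-Leibniz algebras from $(V,[,]_T,\phi)$ to $(A,[,],\alpha)$, the nontrivial content being $T([u,v]_T)=[Tu,Tv]$, i.e. identity \eqref{rbHL2} rewritten in terms of the bracket $[,]_T$. Combining the two, $T$ commutes with the structure maps ($T\phi=\alpha T$) and intertwines both the multiplications and the brackets, hence by the definition of a morphism of Hom-Leibniz Poisson algebras the proof is finished.

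I do not expect any obstacle: all the computational effort has already been absorbed into Corollaries \ref{cmorpHas} and \ref{cmorp}, and the only observation required is that the notion of morphism here factors through its two constituent structures. The single point to check with a little care is that the operator $T$ invoked in both corollaries is literally the same linear map; this is immediate, since a relative Rota-Baxter operator on the Hom-Leibniz Poisson algebra is by definition the common map $T$ satisfying \eqref{rbHAs2} and \eqref{rbHL2} simultaneously.
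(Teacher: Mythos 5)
Your argument is correct and coincides with the paper's own proof, which simply cites Corollary \ref{cmorpHas} and Corollary \ref{cmorp} for the Hom-associative and Hom-Leibniz parts respectively. The reduction to the two constituent morphism statements is exactly the intended route.
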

\begin{proof}
 It follows by Corollary \ref{cmorpHas} and 
 Corollary \ref{cmorp}
\end{proof}
\begin{theorem}\label{tHLPRB}
 Let $T$ be a relative Rota-Baxter operator on a Hom-Leibniz Poisson algebra $(A,\cdot,[,],\alpha)$ with respect to a representation  $(V,\lambda^l,\lambda^r,\rho^l,\rho^r,\phi ).$  
 Then, $(A, \overline{\lambda^l}, \overline{\lambda^r},\overline{\rho^l}, \overline{\rho^r},\alpha)$ is a representation of the Hom-Leibniz Poisson algebra
 $(V,\diamond, [,]_T,\phi)$ where $\overline{\lambda^l}, \overline{\lambda^r},\overline{\rho^l}, \overline{\rho^r}$ are defined as (\ref{rrba1}),(\ref{rrba2}), (\ref{rrb1}),(\ref{rrb2}) respectively, i.e.,
 \begin{eqnarray}
 && \overline{\lambda^l}(u)x:=Tu\cdot x-T\lambda^r(x)u   \mbox{ for all $(x,u)\in A\times V$}\nonumber\\
 && \overline{\lambda^r}(u)x:=x\cdot Tu-T\lambda^l(x)u  \mbox{ for all $(x,u)\in A\times V$}\nonumber\\
 && \overline{\rho^l}(u)x:=[Tu,x]-T\rho^r(x)u   \mbox{ for all $(x,u)\in A\times V$}\nonumber\\
 && \overline{\rho^r}(u)x:=[x,Tu]-T\rho^l(x)u  \mbox{ for all $(x,u)\in A\times V.$}\nonumber
 \end{eqnarray}
 \end{theorem}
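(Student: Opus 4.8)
The plan is to check, one by one, the three defining axioms of a representation of the Hom-Leibniz Poisson algebra $(V,\diamond,[,]_T,\phi)$ on the space $A$ equipped with the five structure maps $\overline{\lambda^l},\overline{\lambda^r},\overline{\rho^l},\overline{\rho^r}$ and $\alpha$. The first two axioms require no new work. Axiom (1), that $(A,\overline{\lambda^l},\overline{\lambda^r},\alpha)$ be a representation of the Hom-associative algebra $(V,\diamond,\phi)$, is precisely the conclusion of Theorem \ref{tHasRB}; and axiom (2), that $(A,\overline{\rho^l},\overline{\rho^r},\alpha)$ be a representation of the Hom-Leibniz algebra $(V,[,]_T,\phi)$, is precisely the conclusion of Theorem \ref{tHLRB}. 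Thus the purely associative and purely Leibniz parts are already established, and only the compatibility axiom (3) remains.

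It therefore suffices to verify the three compatibility conditions (\ref{rHLP1})--(\ref{rHLP3}) for the sextuple $(A,\overline{\lambda^l},\overline{\lambda^r},\overline{\rho^l},\overline{\rho^r},\alpha)$, where now $x\in A$ plays the role of the representation vector while $u,v\in V$ are the algebra elements, the bracket is $[,]_T$, the product is $\diamond$, and the space map is $\alpha$. Explicitly, these read
\begin{eqnarray}
&&\overline{\rho^r}(\phi(v))\overline{\lambda^l}(u)=\overline{\lambda^l}(\phi(u))\overline{\rho^r}(v)+\overline{\lambda^l}([u,v]_T)\alpha,\nonumber\\
&&\overline{\rho^r}(\phi(v))\overline{\lambda^r}(u)=\overline{\lambda^r}([u,v]_T)\alpha+\overline{\lambda^r}(\phi(u))\overline{\rho^r}(v),\nonumber\\
&&\overline{\rho^l}(u\diamond v)\alpha=\overline{\lambda^l}(\phi(u))\overline{\rho^l}(v)+\overline{\lambda^r}(\phi(v))\overline{\rho^l}(u).\nonumber
\end{eqnarray}
My approach for each is the template already used in the proofs of Theorems \ref{tHasRB} and \ref{tHLRB}: first expand every occurrence of $\overline{\lambda^l},\overline{\lambda^r},\overline{\rho^l},\overline{\rho^r}$ by its defining formula when applied to $\alpha(x)$ or to $x$; then invoke Corollary \ref{cmorpHLP} to replace $T(u\diamond v)$ by $Tu\cdot Tv$ and $T[u,v]_T$ by $[Tu,Tv]$ wherever they arise. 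Next I would use (\ref{rbHAs1}) to slide $\alpha$ through $T$ (as $\alpha T=T\phi$), apply the Hom-associativity (\ref{HAs}), the Hom-Leibniz identity (\ref{leib}) and the compatibility (\ref{cHLP}) to rearrange the $T$-free outer products and brackets, and finally apply the operator relations (\ref{rbHAs2}) and (\ref{rbHL2}) to peel single $T$'s off the mixed interior terms.

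After these reductions each side collapses to a sum in which the surviving structure is only the action of $\lambda^l,\lambda^r,\rho^l,\rho^r$ on $V$; the two sides then match by the original compatibility identities (\ref{rHLP1})--(\ref{rHLP3}) valid in the representation $\mathcal{V}$, used together with (\ref{rAs4}), (\ref{rHL3}) and (\ref{rHL4}). The main obstacle is combinatorial rather than conceptual: each equality expands into roughly a dozen terms, and the delicate point is to route the cancellations so that the $T$-prefixed interior terms produced by (\ref{rbHAs2}) and (\ref{rbHL2}) pair off exactly, leaving only the outer $T$-free products and the image under $T$ of the $V$-side compatibility relations. Careful bookkeeping of the argument inside each $T$ --- in particular tracking which factor carries $\alpha$ or $\phi$ after an application of (\ref{rbHAs1}) --- is where the computation is most error-prone.
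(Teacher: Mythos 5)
Your proposal is correct and follows essentially the same route as the paper: both reduce the first two axioms to Theorems \ref{tHasRB} and \ref{tHLRB}, and both then verify the three mixed compatibility identities (\ref{rHLP1})--(\ref{rHLP3}) for the barred maps (with $x\in A$ as the module element and $u,v\in V$ as algebra elements) by expanding the definitions, using $T(u\diamond v)=Tu\cdot Tv$, $T[u,v]_T=[Tu,Tv]$, $T\phi=\alpha T$, the identities (\ref{HAs}), (\ref{leib}), (\ref{cHLP}), and peeling off $T$'s via (\ref{rbHAs2}) and (\ref{rbHL2}) until the original relations in $\mathcal{V}$ finish the cancellation. You stop short of writing out the explicit term-by-term computations, but the identities you list to be checked and the tools you name for each reduction coincide exactly with those carried out in the paper's proof.
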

 \begin{proof}
  We have proved that $(A, \overline{\lambda^l}, \overline{\lambda^r},\alpha)$ is a representation of the Hom-associative algebra $(V,\diamond, \phi)$  and $(A, \overline{\rho^l}, \overline{\rho^r},\alpha)$ is a representation of the Hom-Leibniz algebra $(V, [,]_T,\phi)$ by Theorem \ref{tHasRB} and Theorem \ref{tHLRB} respectively. It remains to prove $(\ref{rHLP1})-(\ref{rHLP3}).$ Let $u,v\in V$ and $x\in A.$ Then
  \begin{eqnarray}
&&\overline{\rho^r}(\phi(v))\overline{\lambda^l}(u)x= [Tu\cdot x,T\phi(v)]-[T\lambda^r(x)u,T\phi(v)]-
   T\rho^l(Tu\cdot x)\phi(v)+T\rho^l(T\lambda^r(x)u)\phi(v)\nonumber\\
   &&=[Tu\cdot x,T\phi(v)]-T\rho^l(T\lambda^r(x)u)\phi(v)-T\rho^r(T\phi(v))\lambda^r(x)u-
   T\lambda^l(T\phi(u))\rho^l(x)v\nonumber\\
   &&-T\lambda^r(\alpha(x))\rho^l(Tu)v+T\rho^l(T\lambda^r(x)u)\phi(v)
   \mbox{ ( by (\ref{rbHL1}), (\ref{rbHL2}) and (\ref{rHLP3})  )}\nonumber\\
   &&=[Tu\cdot x,T\phi(v)]-T\rho^r(T\phi(v))\lambda^r(x)u-
   T\lambda^l(T\phi(u))\rho^l(x)v-T\lambda^r(\alpha(x))\rho^l(Tu)v\nonumber.
  \end{eqnarray}
Similarly, we compute
\begin{eqnarray}
&&\overline{\lambda^l}(\phi(u))\overline{\rho^r}(v)x= T\phi(u)\cdot[x,Tv]-T\phi(u)\cdot T\rho^l(x)v-T\lambda^r([x,Tv])\phi(u)+T\lambda^r(T\rho^l(x)v)\phi(u)\nonumber\\
&&=T\phi(u)\cdot[x,Tv]-T\lambda^l(T\phi(u))\rho^l(x)v-T\lambda^r(T\rho^l(x)v)\phi(u)-T\rho^r(T\phi(v))\lambda^r(x)u\nonumber\\
&&+T\lambda^r(\alpha(x))\rho^r(Tv)u+T\lambda^r(T\rho^l(x)v)\phi(u)
   \mbox{ ( by (\ref{rbHAs1}), (\ref{rbHAs2}) and (\ref{rHLP2})  )}\nonumber\\
   &&=T\phi(u)\cdot[x,Tv]-T\lambda^l(T\phi(u))\rho^l(x)v-T\rho^r(T\phi(v))\lambda^r(x)u+T\lambda^r(\alpha(x))\rho^r(Tv)u,\nonumber
  \end{eqnarray}
\begin{eqnarray}
 &&\overline{\lambda^l}([u,v]_T)\alpha(x)=
 [Tu,Tv]\cdot\alpha(x)-T\lambda^r(\alpha(x))\rho^l(Tu)v-T\lambda^r(\alpha(x))\rho^r(Tv)u.\nonumber
\end{eqnarray}
It follows that
\begin{eqnarray}
 &&\overline{\rho^r}(\phi(v))\overline{\lambda^l}(u)x-\overline{\lambda^l}(\phi(u))\overline{\rho^r}(v)x-\overline{\lambda^l}([u,v]_T)\alpha(x)\nonumber\\
 &&=\Big([Tu\cdot x,T\phi(v)]-[Tu,Tv]\cdot\alpha(x)- T\phi(u)\cdot[x,Tv]\Big)=0
 \mbox{  (  by (\ref{rbHAs1}) and (\ref{cHLP})  )} \nonumber
\end{eqnarray}
i.e., (\ref{rHLP1})  holds. To get (\ref{rHLP2}), we proceed as follows:
\begin{eqnarray}
&&\overline{\rho^r}(\phi(v))\overline{\lambda^r}(u)x= [x\cdot Tu\cdot ,T\phi(v)]-[T\lambda^l(x)u,T\phi(v)]-
   T\rho^l(x\cdot Tu)\phi(v)+T\rho^l(T\lambda^l(x)u)\phi(v)\nonumber\\
   &&=[x\cdot Tu,T\phi(v)]-T\rho^l(T\lambda^l(x)u)\phi(v)-T\rho^r(T\phi(v))\lambda^l(x)u-
   T\lambda^l(\alpha(x))\rho^l(Tu)v\nonumber\\
   &&-T\lambda^r(T\phi(u))\rho^l(x)v+T\rho^l(T\lambda^l(x)u)\phi(v)
   \mbox{ ( by (\ref{rbHL1}), (\ref{rbHL2}) and (\ref{rHLP3})  )}\nonumber\\
   &&=[x\cdot Tu,T\phi(v)]-T\rho^r(T\phi(v))\lambda^l(x)u-
   T\lambda^l(\alpha(x))\rho^l(Tu)v-T\lambda^r(T\phi(u))\rho^l(x)v\nonumber.
  \end{eqnarray}
Similarly, we compute
\begin{eqnarray}
 &&\overline{\lambda^r}([u,v]_T)\alpha(x)=
 \alpha(x)\cdot[Tu,Tv]-T\lambda^l(\alpha(x))\rho^l(Tu)v-T\lambda^l(\alpha(x))\rho^r(Tv)u,\nonumber
\end{eqnarray}
\begin{eqnarray}
&&\overline{\lambda^r}(\phi(u))\overline{\rho^r}(v)x= [x,Tv]\cdot T\phi(u)-(T\rho^l(x)v)\cdot T\phi(u)-T\lambda^l([x,Tv])\phi(u)+T\lambda^l(T\rho^l(x)v)\phi(u)\nonumber\\
&&=[x,Tv]\cdot T\phi(u)-T\lambda^l(T\rho^l(x)v)\phi(u)-T\lambda^r(T\phi(u))\rho^l(x)v-T\rho^r(T\phi(v))\lambda^l(x)u\nonumber\\
&&+T\lambda^l(\alpha(x))\rho^r(Tv)u+T\lambda^l(T\rho^l(x)v)\phi(u)
   \mbox{ ( by (\ref{rbHAs1}), (\ref{rbHAs2}) and (\ref{rHLP1})  )}\nonumber\\
   &&=[x,Tv]\cdot T\phi(u)-T\lambda^r(T\phi(u))\rho^l(x)v-T\rho^r(T\phi(v))\lambda^l(x)u+T\lambda^l(\alpha(x))\rho^r(Tv)u. \nonumber
  \end{eqnarray}
It follows that
\begin{eqnarray}
 &&\overline{\rho^r}(\phi(v))\overline{\lambda^r}(u)x-\overline{\lambda^r}([u,v]_T)\alpha(x)-\overline{\lambda^r}(\phi(u))\overline{\rho^r}(v)x\nonumber\\
 &&=\Big([x\cdot Tu,T\phi(v)]-\alpha(x)\cdot[Tu,Tv]- [x,Tv]\cdot T\phi(u)\Big)=0
 \mbox{  (  by (\ref{rbHAs1}) and (\ref{cHLP})  )} \nonumber
\end{eqnarray}
i.e., (\ref{rHLP2})  holds. Finally, we compute as the previous case:
\begin{eqnarray}
 &&\overline{\rho^l}(u\diamond v)\alpha(x)
 =[Tu\cdot Tv,\alpha(x)]-T\rho^r(\alpha(x))\lambda^l(Tu)v-T\rho^r(\alpha(x))\lambda^r(Tv)u\nonumber\\
 &&=[Tu\cdot Tv,\alpha(x)]-T\lambda^l(T\phi(u))\rho^r(x)v-T\lambda^l([Tu,x])v-T\lambda^r([Tv,x])\phi(u)\nonumber\\
 &&-T\lambda^r(T\phi(v))\rho^r(x)u \mbox{ 
 (  by (\ref{rbHL1}) (\ref{rHLP1}) and (\ref{rHLP2})).}
\end{eqnarray}
Also, we compute
\begin{eqnarray}
 &&\overline{\lambda^l}(\phi(u))\overline{\rho^l}(v)x=T\phi(u)\cdot[Tv,x]-
 T\phi(u)\cdot T\rho^r(x)v-T\lambda^r([Tv,x])\phi(u)+T\lambda^r(T\rho^r(x)v)\phi(u)\nonumber\\
 &&=T\phi(u)\cdot[Tv,x]-
 T\lambda^l(T\phi(u))\rho^r(x)v-T\lambda^r(T\rho^r(x)v)\phi(u)-T\lambda^r([Tv,x])\phi(u)\nonumber\\
 &&+T\lambda^r(T\rho^l(x)v)\phi(u) \mbox{ ( by (\ref{rbHAs2})}\nonumber\\
 &&=T\phi(u)\cdot[Tv,x]-
 T\lambda^l(T\phi(u))\rho^r(x)v-T\lambda^r([Tv,x])\phi(u)\nonumber
\end{eqnarray}
and similarly using the same hypothesis as in the previous equation, we obtain
\begin{eqnarray}
 &&\overline{\lambda^r}(\phi(v))\overline{\rho^l}(u)x=[Tu,x]\cdot T\phi(v)-T\lambda^r(T\phi(v))\rho^r(x)u-T\lambda^l([Tu,x])\phi(v).\nonumber
\end{eqnarray}
Hence, (\ref{rHLP3}) follows by (\ref{rbHL1}) and (\ref{cHLP}).
 \end{proof} 

\vspace*{1cm}
Sylvain Attan\\
 D\'epartement de Math\'ematiques, Universit\'{e} d'Abomey-Calavi
01 BP 4521, Cotonou 01, B\'enin. E.mail: syltane2010@yahoo.fr
\end{document}